\theoremstyle{definition}
\newtheorem{thm}{Theorem}[subsection]
\newtheorem*{thm*}{Theorem}
\newtheorem{defi}[thm]{Definition}
\newtheorem*{defi*}{Definition}
\newtheorem*{acknowledge}{Acknowledgement}
\newtheorem{cor}[thm]{Corollary}
\newtheorem*{cor*}{Corollary}
\newtheorem{prop}[thm]{Proposition}
\newtheorem*{prop*}{Proposition}
\newtheorem{lem}[thm]{Lemma}
\newtheorem*{lem*}{Lemma}
\newtheorem{ex}[thm]{Example}
\newtheorem*{ex*}{Example}
\newtheorem{rem}[thm]{Remark}
\newtheorem*{rem*}{Remark}
\newtheorem*{hw*}{Homework}
\newcommand{\C}{\mathbb{C}}
\newcommand{\R}{\mathbb{R}}
\newcommand{\fix}{\mathrm{fix}}
\newcommand{\Z}{\mathbb{Z}}
\newcommand{\T}{\mathbb{T}}
\DeclareMathOperator{\chara}{\Delta}
\DeclareMathOperator{\Iso}{\mathrm{Iso}}
\DeclareMathOperator{\Funct}{Funct}
\DeclareMathOperator{\Span}{\mathrm{span}}
\DeclareMathOperator{\ev}{ev}
\DeclareMathOperator{\ab}{\mathrm{ab}}
\DeclareMathOperator{\Aut}{\mathrm{Aut}}
\def\i<#1>{\langle #1 \rangle}
\def\l<#1>{\left\langle #1 \right\rangle}
\renewenvironment{proof}[1][\proofname]{\par
  \normalfont
  \topsep6\p@\@plus6\p@ \trivlist
  \item[\hskip\labelsep{\bfseries #1}\@addpunct{.}]\ignorespaces
}{%
  \endtrivlist
}
\renewcommand{\proofname}{\sc{Proof}}
\newcommand*{\defeq}{\mathrel{\rlap{%
                     \raisebox{0.3ex}{$\m@th\cdot$}}%
                     \raisebox{-0.3ex}{$\m@th\cdot$}}%
                     =}
\title[The abelianizations of groupoid C*-algebras]{Quotients of \'etale groupoids and the abelianizations of groupoid C*-algebras}
\author{Fuyuta Komura}
\address{Department of Mathematics, Faculty of Science and Technology, Keio University
	3–14–1 Hiyoshi, Kohoku-ku, Yokohama, 223–8522, Japan}
\email{fuyuta.k@keio.jp}
\begin{document}
\maketitle
\begin{abstract}
	In this paper,
	we introduce quotients of \'etale groupoids.
	Using the notion of quotients, we describe the abelianizations of groupoid C*-algebras.
	As another application,
	we obtain a simple proof that effectiveness of an \'etale groupoid is implied by the full uniqueness property of its groupoid C*-algebra. 

\end{abstract}
\section{Introduction}	
The study of C*-algebras associated to \'etale groupoids, groupoid C*-algebras, was initiated by Renault in \cite{renault1980groupoid}.
Since then, many researchers have studied the relationship between \'etale groupoids and groupoid C*-algebras.
In the previous studies,
there are many results for C*-algebras associated to Hausdorff \'etale groupoids.
For a non-Hausdorff \'etale groupoid,
its C*-algebra seems not to be studied sufficiently.
However, non-Hausdorff groupoids naturally arise as mentioned in \cite{Connes82asurvey}, \cite{ExelPardo2016} and so on.
Exel pointed out that some results known for Hausdorff \'etale groupoids do not necessarily hold for non-Hausdorff groupoids in \cite{ExelnonHausdorff}.
In \cite{LisaRuyAidan}, the authors treat simplicity of groupoid C*-algebras associated to non-Hausdorff \'etale groupoids.

In this paper, we calculate the abelianization of a groupoid C*-algebra.
For a discrete group $\Gamma$,
the abelianization $C^*(\Gamma)^{\ab}$ of its group C*-algebra $C^*(\Gamma)$ is isomorphic to $C^*(\Gamma^{\ab})$, where $\Gamma^{\ab}$ is the abelianization of $\Gamma$.
Furthermore,
$C^*(\Gamma^{\ab})$ is isomorphic to $C(\widehat{\Gamma^{\ab}})$,
where $\widehat{\Gamma^{\ab}}$ is the Pontryagin dual of $\Gamma^{\ab}$.
It is natural to consider an \'etale groupoid analogy.
For an \'etale groupoid $G$,
we construct an \'etale groupoid $G^{\ab}$ and a topological groupoid $\widehat{G^{\ab}}$ so that $C^*(G)^{\ab}\simeq C^*(G^{\ab})\simeq C_0(\widehat{G^{\ab}})$ holds.
In order to construct $G^{\ab}$,
we introduce the notion of quotient \'etale groupoids.
A quotient \'etale groupoid often becomes non-Hausdorff even if the original \'etale groupoid is Hausdorff.
Therefore,
we treat not necessarily Hausdorff \'etale groupoids and their C*-algebras,
which are defined by Connes \cite{Connes82asurvey}.
As a byproduct,
we obtain a simple proof that effectiveness of an \'etale groupoid is implied by the full uniqueness property of its groupoid C*-algebra (see Corollary \ref{uniqueness property implies effectiveness}).
We remark that this result has been shown in \cite{Brown2014} for Hausdorff \'etale groupoids in a different way and the proof in \cite{Brown2014} seems to work for non-Hausdorff \'etale groupoids.

This paper is organized as follows.
In Section 1,
we recall definitions and basic facts about not necessarily Hausdorff \'etale groupoids and their C*-algebras.

In Section 2,
we introduce the notion of quotient \'etale groupoids and show some applications.
Using quotients,
we obtain a simple proof that full uniqueness property of a groupoid C*-algebra induces effectiveness of an \'etale groupoid. 

In Section 3,
for an \'etale groupoid $G$,
we construct an \'etale abelian group bundle $G^{\ab}$ through quotients.
Finally, we show that the abelianization of a groupoid C*-algebra is isomorphic to the C*-algebra associated to $G^{\ab}$.
Since the abelianization of a C*-algebra is commutative,
it is isomorphic to $C_0(X)$ for some locally compact Hausdorff space $X$ by the Gelfand-Naimark theorem.
We show that the abelianization of a groupoid C*-algebra $C^*(G)$ is isomorphic to $C_0(\widehat{G^{\ab}})$,
where $\widehat{G^{\ab}}$ is introduced in this paper.

We obtain some results by using quotients of \'etale groupoids,
which are not necessarily Hausdorff.
We expect that this paper stimulates the study of non-Hausdorff \'etale groupoids.

\begin{acknowledge}
The author would like to thank his supervisor, Prof.\ Takeshi Katsura, for fruitful discussions and guidance.
\end{acknowledge}

\section{\'Etale groupoids and groupoid C*-algebras}

In this section,
we recall the notions of \'etale groupoids and groupoid C*-algebras.
We refer to \cite{renault1980groupoid}, \cite{paterson2012groupoids} and \cite{asims} for details.

\subsection{\'Etale groupoids}

A groupoid is a set $G$ together with a distinguished subset $G^{(0)}\subset G$,
source and range maps $s,r\colon G\to G^{(0)}$ and a multiplication 
\[
G^{(2)}\defeq \{(\alpha,\beta)\in G\times G\mid s(\alpha)=r(\beta)\}\ni (\alpha,\beta)\mapsto \alpha\beta \in G
\]
such that
\begin{enumerate}
	\item for all $x\in G^{(0)}$, $s(x)=x$ and $r(x)=x$ hold,
	\item for all $\alpha\in G$, $\alpha s(\alpha)=r(\alpha)\alpha=\alpha$ holds,
	\item for all $(\alpha,\beta)\in G^{(2)}$, $s(\alpha\beta)=s(\beta)$ and $r(\alpha\beta)=r(\alpha)$ hold,
	\item if $(\alpha,\beta),(\beta,\gamma)\in G^{(2)}$,
	we have $(\alpha\beta)\gamma=\alpha(\beta\gamma)$,
	\item\label{inverse} every $\gamma \in G$,
	there exists $\gamma'\in G$ which satisfies $(\gamma',\gamma), (\gamma,\gamma')\in G^{(2)}$ and $s(\gamma)=\gamma'\gamma$ and $r(\gamma)=\gamma\gamma'$.   
\end{enumerate}
Since the element $\gamma'$ in (\ref{inverse}) is uniquely determined by $\gamma$,
$\gamma'$ is called the inverse of $\gamma$ and denoted by $\gamma^{-1}$.
We call $G^{(0)}$ the unit space of $G$.
A subgroupoid of $G$ is a subset of $G$ which is closed under the inversion and multiplication. 
For $U\subset G^{(0)}$, we define $G_U\defeq s^{-1}(U)$ and $G^{U}\defeq r^{-1}(U)$.
We define also $G_x\defeq G_{\{x\}}$ and $G^x\defeq G^{\{x\}}$ for $x\in G^{(0)}$.
The isotropy bundle of $G$ is denoted by $\Iso(G)\defeq\{\gamma\in G\mid s(\gamma)=r(\gamma)\}$.
If $G$ satisfies $G=\Iso(G)$,
$G$ is called a group bundle over $G^{(0)}$.
A group bundle $G$ is said to be abelian if $G_x$ is an abelian group for all $x\in G^{(0)}$.

A topological groupoid is a groupoid equipped with a topology where the multiplication and the inverse are continuous.
Note that the source map and range map of a topological groupoid are continuous.

\begin{defi}\label{def of etale}
	A topological groupoid $G$ is said to be \'etale if
	\begin{enumerate}
		\item the unit space $G^{(0)}\subset G$ is a locally compact Hausdorff space with respect to the relative topology of $G$,
		\item the source map $s\colon G\to G^{(0)}$ is locally homeomorphic (i.e.\ for all $\alpha\in G$, there exists an open neighborhood $U\subset G$ of $\alpha$ such that $s(U)\subset G^{(0)}$ is open and $s|_U$ is a homeomorphism onto $s(U)$).
	\end{enumerate}
\end{defi}
An \'etale topological groupoid is called an \'etale groupoid in short.
In this paper,
we assume that the unit space of an \'etale groupoid is a locally compact Hausdorff space. 
We \textbf{do not} assume that an \'etale groupoid is a Hausdorff space as a topological space.

Note that a local homeomorphism is an open map.
If $s$ is locally homeomorphic,
then $r$ is also locally homeomorphic since $r(\gamma)=s(\gamma^{-1})$ holds for all $\gamma\in G$.
By Definition \ref{def of etale},
the family of all locally compact Hausdorff open subsets of $G$ is an open basis for the topology of $G$.
An \'etale groupoid $G$ is said to be effective if $G^{(0)}=\Iso(G)^{\circ}$, where $\Iso(G)^{\circ}$ denotes the interior of $\Iso(G)$. 

In some papers, the condition that the source map $s\colon G\to G^{(0)}$ is locally homeomorphic in Definition \ref{def of etale} is replaced by the condition that the source map $s\colon G\to G$ is locally homeomorphic.
As in Proposition \ref{local homeo}, these definitions are equivalent.

\begin{prop}\label{local homeo}
	Let $G$ be an \'etale groupoid.
	The unit space $G^{(0)}$ is an open subset of $G$.
	Furthermore,
	the source and range maps $s, r$ are local homeomorphisms as maps from $G$ to $G$.
\end{prop}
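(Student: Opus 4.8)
The plan is to establish the two assertions in order, first proving that $G^{(0)}$ is open and then bootstrapping this fact to the statement about $s$ and $r$.

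For the openness of $G^{(0)}$, the key observation is that the inclusion $\iota\colon G^{(0)}\hookrightarrow G$ is a continuous section of the source map, in the sense that $s\circ\iota=\id_{G^{(0)}}$: indeed $s(x)=x$ for every $x\in G^{(0)}$ by axiom (1), and $\iota$ is continuous because $G^{(0)}$ carries the relative topology. The heart of the argument is then the general principle that a continuous section of a local homeomorphism has open image. Concretely, I would fix $x\in G^{(0)}$ and choose, using that $s\colon G\to G^{(0)}$ is locally homeomorphic, an open neighborhood $U\subset G$ of $x$ on which $s|_U\colon U\to s(U)$ is a homeomorphism onto an open subset of $G^{(0)}$. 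Set $V\defeq\iota^{-1}(U)=U\cap G^{(0)}$, an open neighborhood of $x$ in $G^{(0)}$. For $y\in V$ one has $y\in U$ and $s(y)=y$, so by injectivity of $s|_U$ the element $y$ is the unique preimage of $y$ under $s|_U$; that is, $(s|_U)^{-1}(y)=y$. Since $V$ is an open subset of $s(U)$ and $(s|_U)^{-1}$ is an open map, the set $(s|_U)^{-1}(V)=V$ is open in $U$, hence open in $G$. As $x\in V\subset G^{(0)}$, this exhibits an open neighborhood of $x$ contained in $G^{(0)}$, and letting $x$ range over $G^{(0)}$ shows that $G^{(0)}$ is open in $G$.

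For the second assertion, once $G^{(0)}$ is known to be open in $G$, the inclusion $\iota$ becomes an open embedding, so for $A\subset G^{(0)}$ the conditions ``$A$ open in $G^{(0)}$'' and ``$A$ open in $G$'' coincide. Hence for any open bisection $U$ as above, the map $s|_U\colon U\to s(U)$ remains a homeomorphism when $s(U)$ is regarded as a subspace of $G$, and $s(U)$ is now open in $G$; this is exactly the assertion that $s\colon G\to G$ is a local homeomorphism. The same conclusion for $r$ follows by noting, as recorded in the excerpt, that $r(\gamma)=s(\gamma^{-1})$ and that inversion is a homeomorphism, so that $r\colon G\to G$ is the composite of a local homeomorphism with a homeomorphism and is therefore itself a local homeomorphism.

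I expect the only genuine obstacle to be the first paragraph, specifically the verification that the continuous section $\iota$ agrees locally with the inverse bisection map $(s|_U)^{-1}$; once this identification is in place, the openness of $G^{(0)}$ and the remaining claims about $s$ and $r$ are essentially formal consequences of the definition of a local homeomorphism.
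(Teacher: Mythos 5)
Your proof is correct and is essentially the paper's argument in dual form: your neighborhood $V=U\cap G^{(0)}$ is literally the same set as the paper's $V=U\cap s^{-1}(G^{(0)}\cap U)$, and both hinge on the same observation (units are fixed by $s$ and $s|_U$ is injective) --- the paper uses it to show its evidently open $V$ is contained in $G^{(0)}$, while you use it to show your evidently contained $V$ is open, via the identity $(s|_U)^{-1}(V)=V$ and openness of the inverse homeomorphism. You also correctly supply the deduction of the second assertion (openness of $G^{(0)}$ upgrades $s(U)$ to an open subset of $G$, and $r$ is $s$ composed with inversion), which the paper leaves as ``easily checked.''
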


\begin{proof}
	First,
	we show that $G^{(0)}$ is an open subsets of $G$.
	Take $x\in G^{(0)}$ arbitrarily.
	Then,
	there exists an open subset $U\subset G$ with $x\in U$ such that $s(U)$ is an open subsets of $G^{(0)}$ and $s|_U$ is a homeomorphism onto $s(U)$.
	Define $V\defeq U\cap s^{-1}(G^{(0)}\cap U)$.
	Observe that $V$ is an open subset of $G$ with $x\in V$.
	Moreover,
	observe that $V\subset G^{(0)}$.
	Indeed, for every $\gamma\in V$,
	we have $s(\gamma)\in U$ and $s(\gamma)=s(s(\gamma))$.
	Since $s|_U$ is injective,
	it follows that $\gamma=s(\gamma)\in G^{(0)}$.
	Therefore,
	we have shown $V\subset G^{(0)}$ and this implies that $G^{(0)}$ is an open subset of $G$.
	Now, the second assertion can be easily checked.
	\qed
\end{proof}

\begin{defi}
	Let $G$ be an \'etale groupoid.
	A subset $U\subset G$ is called a bisection if both $s|_U$ and $r|_U$ are injective.
\end{defi}

	For an \'etale groupoid $G$,
	an open bisection of $G$ is a locally compact Hausdorff space because it is homeomorphic to open subset of $G^{(0)}$ and we assume that $G^{(0)}$ is locally compact Hausdorff.
	Note that the set of all open bisections composes an open basis of $G$.

\begin{ex}
	Every locally compact Hausdorff space is regarded as a Hausdorff \'etale groupoid whose unit space coincides with the whole space.  
\end{ex}

\begin{ex}
	Every topological group is regarded as a topological groupoid whose unit space is a singleton.
	A topological group is discrete if and only if it is \'etale as a topological groupoid.
\end{ex}

\begin{ex}\label{action}
	Let $X$ be a locally compact Hausdorff space and $\Gamma$ be a discrete group.
	We denote by $\Aut(X)$ the set of all homeomorphisms on $X$,
	which is a group under the composition.
	An action of $\Gamma$ on $X$ is a group homomorphism $\alpha\colon \Gamma\ni s\mapsto\alpha_s\in\Aut(X) $ and written $\alpha\colon \Gamma\curvearrowright X$.
	Now,
	we construct the groupoid associated to an action $\alpha\colon \Gamma\curvearrowright X$.
	Define $\Gamma \ltimes_{\alpha}X\defeq \Gamma\times X$ as a topological space.
	The unit space of $\Gamma \ltimes_{\alpha}X$ is $X$,
	which is identified with the subset of $\Gamma \ltimes_{\alpha}X$ via an inclusion $X\ni x\mapsto (e,x)\in \Gamma \ltimes_{\alpha}X$.
	The source map and range map are defined by $s((t,x))=x$ and $r((t,x))=\alpha_t(x)$ respectively for $(t,x)\in \Gamma \ltimes_{\alpha}X$.
	For a pair $(t_1, y),(t_2, x)\in \Gamma \ltimes_{\alpha}X$ with $y=\alpha_{t_2}(x)$,
	their multiplication is defined by $(t_1,y)\cdot(t_2,x)\defeq (t_1t_2,x)$.
	An inverse is given by $(t,x)^{-1}=(t^{-1},\alpha_t(x))$.
	Then,
	$\Gamma \ltimes_{\alpha}X$ is a Hausdorff \'etale groupoid.
\end{ex}

\begin{prop}[{\cite[Proposition 2.2.4]{paterson2012groupoids}}]\label{multi} 
	Let $G$ be an \'etale groupoid and $U,V\subset G$ be open sets.
	Then,
	a set $UV\defeq \{\alpha\beta\in G \mid\alpha\in U,\beta\in V, s(\alpha)=r(\beta)\}\subset G$ is an open set. 
	Furthermore,
	if $U,V\subset G$ are open bisections,
	$UV$ is also an open bisection.
\end{prop}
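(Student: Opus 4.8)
The plan is to first establish the ``furthermore'' assertion for open bisections, and then deduce openness of $UV$ for arbitrary open sets by a covering argument. The structural facts I will exploit are that the open bisections form an open basis of $G$ and that on an open bisection both $s$ and $r$ restrict to homeomorphisms onto open subsets of $G^{(0)}$.

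First suppose $U,V$ are open bisections. To see that $UV$ is a bisection, I argue by injectivity of $s$ and $r$. If $\alpha_1\beta_1,\alpha_2\beta_2\in UV$ (with $\alpha_i\in U$, $\beta_i\in V$, $s(\alpha_i)=r(\beta_i)$) satisfy $s(\alpha_1\beta_1)=s(\alpha_2\beta_2)$, then $s(\beta_1)=s(\beta_2)$, so $\beta_1=\beta_2$ since $s|_V$ is injective; this forces $s(\alpha_1)=r(\beta_1)=r(\beta_2)=s(\alpha_2)$, hence $\alpha_1=\alpha_2$ since $s|_U$ is injective, and therefore $\alpha_1\beta_1=\alpha_2\beta_2$. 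The symmetric argument using $r|_U$ and $r|_V$ shows $r|_{UV}$ is injective as well.

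The main point is openness of $UV$ for open bisections. Fix $\gamma_0=\alpha_0\beta_0\in UV$. Since $s|_V\colon V\to s(V)$ is a homeomorphism onto the open set $s(V)\subset G^{(0)}$, the assignment $\gamma\mapsto\beta(\gamma)\defeq (s|_V)^{-1}(s(\gamma))$ is continuous on the open set $\{\gamma\in G : s(\gamma)\in s(V)\}$, which contains $\gamma_0$ because $s(\gamma_0)=s(\beta_0)\in s(V)$. On this set I form $\gamma\mapsto\alpha(\gamma)\defeq \gamma\,\beta(\gamma)^{-1}$, which is continuous by continuity of inversion and multiplication; the product is defined because $s(\gamma)=s(\beta(\gamma))=r(\beta(\gamma)^{-1})$. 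At $\gamma_0$ one has $\beta(\gamma_0)=\beta_0$ and $\alpha(\gamma_0)=\alpha_0\beta_0\beta_0^{-1}=\alpha_0\in U$. Since $U$ is open and $\alpha$ is continuous, there is an open neighborhood $N$ of $\gamma_0$ (contained in the domain of $\alpha$) with $\alpha(N)\subset U$. For every $\gamma\in N$ one computes $\alpha(\gamma)\beta(\gamma)=\gamma\,\beta(\gamma)^{-1}\beta(\gamma)=\gamma\,s(\beta(\gamma))=\gamma$, where the last equality uses $s(\beta(\gamma))=s(\gamma)$ and axiom (2); moreover $s(\alpha(\gamma))=r(\beta(\gamma))$, so the product lies in $UV$. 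Hence $N\subset UV$ and $UV$ is open.

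Finally, for arbitrary open $U,V$, I write $U=\bigcup_i U_i$ and $V=\bigcup_j V_j$ as unions of open bisections, using that such bisections form an open basis of $G$. One checks directly that $UV=\bigcup_{i,j}U_iV_j$, so $UV$ is open as a union of the open sets $U_iV_j$ from the previous step. I expect the delicate point to be this openness argument for bisections: constructing the continuous local section $\beta(\gamma)$ from the homeomorphism $s|_V$ and verifying the identity $\alpha(\gamma)\beta(\gamma)=\gamma$ is precisely what upgrades pointwise membership of $\gamma_0$ into a full open neighborhood contained in $UV$.
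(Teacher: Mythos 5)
Your proposal is correct, but it follows a genuinely different route from the paper's proof. The paper handles arbitrary open $U,V$ in one step: given $\gamma=\alpha\beta\in UV$, it chooses an open bisection $W$ containing $\gamma$, uses joint continuity of multiplication to shrink to open bisections $U_1\subset U$, $V_1\subset V$ with $\alpha\in U_1$, $\beta\in V_1$ and $U_1V_1\subset W$, and then recognizes $U_1V_1=r^{-1}(r(U_1V_1))\cap W$, which is open because $r(U_1V_1)=r(U_1\cap s^{-1}(r(V_1)))$ is open in $G^{(0)}$ and $r$ is injective on the bisection $W$. You instead first prove openness when $V$ is an open bisection by building the continuous factorization maps $\beta(\gamma)=(s|_V)^{-1}(s(\gamma))$ and $\alpha(\gamma)=\gamma\,\beta(\gamma)^{-1}$ on the open set $s^{-1}(s(V))$, pulling $U$ back along $\alpha$ to get a neighborhood $N$ of $\gamma_0$ with $N\subset UV$, and then you reduce general open $U,V$ to unions of open bisections. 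Each approach buys something: the paper's is shorter once one accepts the identification of $U_1V_1$ inside the ambient bisection $W$, whereas yours is more self-contained (no ambient bisection around the product is needed, only continuity of inversion and multiplication plus the homeomorphism $s|_V$) and actually establishes the slightly stronger statement that $UV$ is open whenever $U$ is merely open and $V$ is an open bisection. You also spell out the injectivity of $s|_{UV}$ and $r|_{UV}$, which the paper dismisses with ``one can show''; your two-step cancellation argument there is exactly right. The covering step $UV=\bigcup_{i,j}U_iV_j$ is legitimate since the paper records that open bisections form a basis for the topology of $G$.
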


\begin{proof}
	Take $\gamma\in UV$ and an open bisection $W\subset G$ with $\gamma\in W$.
	Then,
	there exist $\alpha\in U$ and $\beta\in V$ such that $\gamma=\alpha\beta$.
	By the continuity of the multiplication of $G$,
	there exist open bisections $U_1,V_1\subset G$ with $\alpha\in U_1\subset U,\beta\in V_1\subset V$ and $U_1V_1\subset W$.
	Note that $r(U_1V_1)=r(U_1\cap s^{-1}(r(V_1)))\subset G^{(0)}$ is an open subset.
	Therefore,
	$U_1V_1=r^{-1}(r(U_1V_1))\cap W\subset G$ is open.
	Now,
	we have $\gamma\in U_1V_1\subset UV$,
	so $UV\subset G$ is an open subset.
	
	Assume that $U,V\subset G$ are open bisections.
	One can show that $s|_{UV}$ and $r|_{UV}$ are injective.
	Therefore,
	$UV$ is an open bisection.
	\qed 
\end{proof}

\begin{defi}
	Let $G$ be a groupoid.
	A subset $F\subset G^{(0)}$ is said to be invariant if $s(\gamma)\in F$ implies $r(\gamma)\in F$ for all $\gamma$.
	A point $x\in G^{(0)}$ is called a fixed point if $\{x\}\subset G^{(0)}$ is invariant. 
\end{defi}

Note that a set $F\subset G^{(0)}$ is invariant if and only if $G^{(0)}\setminus F$ is invariant.
If $F\subset G^{(0)}$ is invariant,
then $G_F=G_F\cap G^F\subset G$ is a subgroupoid whose unit space is $F$.

\begin{prop}\label{fixed points are closed}
	Let $G$ be an \'etale groupoid.
	Then,
	the set of all fixed points $F\subset G^{(0)}$ is a closed subset.
\end{prop}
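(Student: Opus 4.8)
The plan is to prove the equivalent statement that the complement $G^{(0)}\setminus F$, i.e.\ the set of points that are \emph{not} fixed, is open in $G^{(0)}$. Unwinding the definition, a point $x\in G^{(0)}$ fails to be fixed precisely when there exists $\gamma\in G$ with $s(\gamma)=x$ and $r(\gamma)\neq x$. So I would start from an arbitrary non-fixed point $x_0$, fix a witness $\gamma_0\in G$ with $s(\gamma_0)=x_0$ and $r(\gamma_0)\neq x_0$, and produce an open neighborhood of $x_0$ consisting entirely of non-fixed points.

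To build this neighborhood I would use the \'etale structure. Since the open bisections form an open basis of $G$, choose an open bisection $B\subset G$ with $\gamma_0\in B$. Then $s|_B$ is a homeomorphism onto the open set $s(B)\ni x_0$, and $r|_B$ is continuous, so the composite $\phi\defeq r\circ (s|_B)^{-1}\colon s(B)\to G^{(0)}$ is a well-defined continuous map satisfying $\phi(x_0)=r(\gamma_0)\neq x_0$. For each $x\in s(B)$ the element $(s|_B)^{-1}(x)\in B$ is an arrow with source $x$ and range $\phi(x)$; hence whenever $\phi(x)\neq x$, the point $x$ is not fixed.

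The crucial point --- and where the standing hypothesis that $G^{(0)}$ is Hausdorff enters --- is that the set $\{x\in s(B)\mid \phi(x)\neq x\}$ is open. Indeed, consider the continuous map $\psi\colon s(B)\to G^{(0)}\times G^{(0)}$ given by $\psi(x)=(x,\phi(x))$. Because $G^{(0)}$ is Hausdorff, the diagonal $\Delta\subset G^{(0)}\times G^{(0)}$ is closed, so $\psi^{-1}(\Delta)=\{x\in s(B)\mid \phi(x)=x\}$ is closed in $s(B)$, and its complement in $s(B)$ is therefore open in $G^{(0)}$ (recall $s(B)$ is open in $G^{(0)}$). This open set contains $x_0$ by the choice of $\gamma_0$, and by the previous paragraph every one of its points is non-fixed. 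Thus each non-fixed point admits an open neighborhood of non-fixed points, so $G^{(0)}\setminus F$ is open and $F$ is closed.

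I expect the only genuine subtlety to be the last step: since $G$ itself need not be Hausdorff, one cannot hope to separate $x_0$ from $r(\gamma_0)$ inside $G$, so it is essential that the separation argument be carried out entirely within the Hausdorff space $G^{(0)}$, which is possible because the bisection $B$ transports the problem to the map $\phi$ on $s(B)\subset G^{(0)}$. Everything else is a routine consequence of the fact that an open bisection trivializes both $s$ and $r$.
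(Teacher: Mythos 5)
Your proof is correct and takes essentially the same route as the paper: both transport the problem to the continuous map $\phi = r\circ (s|_B)^{-1}$ on the open set $s(B)\subset G^{(0)}$ (the paper calls it $S_U$) and then invoke Hausdorffness of $G^{(0)}$ to conclude that the non-fixed points form a neighborhood. The only difference is cosmetic --- you phrase Hausdorffness via the closed diagonal, while the paper chooses explicit disjoint neighborhoods of $s(\gamma)$ and $r(\gamma)$; if anything, your formulation cleanly handles the point the paper leaves implicit, namely that one must also keep $x$ itself inside the first separating neighborhood.
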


\begin{proof}
	We show that $G^{(0)}\setminus F \subset G^{(0)}$ is an open set.
	Take $x\in G^{(0)}\setminus F$.
	Then,
	there exists $\gamma\in G$ such that $x=s(\gamma)$ and $x\not=r(\gamma)$.
	Take an open bisection $U$ which contains $\gamma$.
	Let $S_U\colon s(U)\to r(U)$ denote a homeomorphism defined by $S_U(s(\alpha))=r(\alpha)$ for each $\alpha\in U$.
	Since $G^{(0)}$ is Hausdorff,
	there exist open sets $U_1,V_1\subset G^{(0)}$ such that $s(\gamma)\in U_1$ , $r(\gamma)\in V_1$ and $U_1\cap V_1=\emptyset$.
	By the continuity of $S_U$,
	there exists an open set $U_2\subset U$ such that $\gamma\in U_2$ and $S_U(U_2)\subset V_1$.
	Now,
	one can see $U_2\subset G^{(0)}\setminus F$.
	Therefore,
	$G^{(0)}\setminus F\subset G^{(0)}$ is an open set.\qed
\end{proof}

We will use the next proposition for the set of all fixed points.

\begin{prop}\label{restriction to inv is etale}
	Let $G$ be an \'etale groupoid and $U,F\subset G^{(0)}$ be an invariant open and closed subset respectively.
	Then,
	$G_U\subset G$ is an open subgroupoid of $G$ and an \'etale groupoid in the relative topology.
	Similarly,
	$G_F\subset G$ is a closed subgroupoid of $G$ and an \'etale groupoid in the relative topology.
\end{prop}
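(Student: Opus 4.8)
The plan is to treat the open case $G_U$ and the closed case $G_F$ in parallel, writing $E$ for the relevant invariant subset ($U$ or $F$) and $H=G_E=s^{-1}(E)$ for the associated subgroupoid; the two arguments diverge only at the step where openness versus closedness of $E$ is invoked.

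First I would confirm that $H$ is a subgroupoid whose unit space is $E$. Since $E$ is invariant, the remark following the definition of invariant subsets gives $G_E=G_E\cap G^E$, so that both $s$ and $r$ map $H$ into $E$; closure under multiplication is immediate from $s(\alpha\beta)=s(\beta)$, and closure under inversion from $s(\gamma^{-1})=r(\gamma)\in E$, which is precisely where invariance enters, while intersecting with $G^{(0)}$ identifies the unit space as $E$. For the topological position of $H$ in $G$, recall from Proposition \ref{local homeo} that $G^{(0)}$ is open in $G$ and that $s$ is continuous; hence $G_U=s^{-1}(U)$ is open when $U$ is open in $G^{(0)}$, and $G_F=s^{-1}(F)$ is closed when $F$ is closed in $G^{(0)}$.

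It then remains to verify that $H$ is \'etale in the relative topology. The unit space $E$ is locally compact Hausdorff, since open and closed subspaces of the locally compact Hausdorff space $G^{(0)}$ inherit these properties. For the source map I would restrict the \'etale charts of $G$: given $\gamma\in H$, choose an open $W\subset G$ with $\gamma\in W$ on which $s$ is a homeomorphism onto the open set $s(W)\subset G^{(0)}$; then $W\cap H$ is open in $H$, one checks $s(W\cap H)=s(W)\cap E$, and $s|_{W\cap H}$, being the restriction of a homeomorphism to an open subset, is a homeomorphism onto this image. Continuity of the multiplication and inversion on $H$ is then inherited by restriction, their images landing in $H$ because $H$ is a subgroupoid. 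The only point demanding attention is the identity $s(W\cap H)=s(W)\cap E$ together with the observation that this set is relatively open in $E$; this is what makes the restricted source map a local homeomorphism onto the intended unit space, and I regard it as bookkeeping rather than a genuine obstacle.
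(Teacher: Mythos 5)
Your proof is correct and follows the same route as the paper, whose own proof merely observes that $U$ and $F$ are locally compact Hausdorff in the relative topology of $G^{(0)}$ and declares the remaining verifications clear. Everything you spell out --- the subgroupoid property via invariance, openness/closedness of $s^{-1}(U)$ and $s^{-1}(F)$, and the restriction of \'etale charts with the identity $s(W\cap H)=s(W)\cap E$ --- is exactly the routine detail the paper leaves implicit.
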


\begin{proof}
	Observe that $U$ and $F$ are locally compact Hausdorff spaces in the relative topology of $G^{(0)}$.
	Now, it is clear that $G_U$ and $G_F$ are \'etale groupoids.
	\qed
\end{proof}
In particular,
if $x\in G^{(0)}$ is a fixed point,
then $G_x\subset G$ is a discrete group.

\subsection{\'Etale groupoid C*-algebras}
Following Connes's idea in \cite{Connes82asurvey},
we associate a C*-algebra to an \'etale groupoid which is not necessarily Hausdorff.

Let $G$ be an \'etale groupoid.
For an open Hausdorff subset $U\subset G$,
we denote the set of all continuous functions with compact support on $U$ by $C_c(U)$.
We regard an element in $C_c(U)$ as an element in $\Funct(G)$, the vector space of all complex valued functions on $G$, by defining it to be $0$ outside of $U$.
We define $\mathcal{C}(G)\defeq \Span \bigcup_U C_c(U)\subset \Funct(G)$, where the union is taken over all open Hausdorff subsets $U\subset G$.

	If $G$ is Hausdorff,
	then $\mathcal{C}(G)$ coincides with $C_c(G)$.
	If $G$ is not Hausdorff,
	an element in $\mathcal{C}(G)$ may not be continuous.

\begin{prop}\label{supported on bisection}
	Let $G$ be an \'etale groupoid.
	Take an open basis $\{U_i\}_{i\in I}$ of $G$ consisting of open Hausdorff subsets.
	Then,
	$\mathcal{C}(G)$ is the linear span of $\bigcup_{i\in I}C_c(U_i)$.
	In particular,
	$\mathcal{C}(G)$ is the linear span of $\bigcup_{U}C_c(U)$,
	where the union is taken over all open bisections of $G$.
\end{prop}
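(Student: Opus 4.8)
The plan is to prove both inclusions of the asserted equality $\mathcal{C}(G) = \Span \bigcup_{i \in I} C_c(U_i)$. The inclusion $\supseteq$ is immediate: each $U_i$ is an open Hausdorff subset, so $C_c(U_i) \subseteq \mathcal{C}(G)$ directly from the definition of $\mathcal{C}(G)$ as the span of $\bigcup_U C_c(U)$ taken over all open Hausdorff $U$. For the reverse inclusion it suffices, again by definition of $\mathcal{C}(G)$, to show that $C_c(U) \subseteq \Span\bigcup_{i} C_c(U_i)$ for an arbitrary open Hausdorff subset $U \subseteq G$. So I would fix such a $U$, fix $f \in C_c(U)$, and set $K \defeq \supp(f)$, a compact subset of $U$.

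The first substantive step is to observe that $U$ is in fact locally compact Hausdorff. It is Hausdorff by hypothesis, and local compactness follows because the open bisections form an open basis of $G$ consisting of locally compact Hausdorff sets (as recorded after the definition of bisection): given $x \in U$, I choose an open bisection $W$ with $x \in W \subseteq U$, and then $x$ has a compact neighbourhood inside $W$, hence inside $U$, since $W$ is open in $U$.

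Next I would cover and localize. Since $\{U_i\}_{i \in I}$ is an open basis and $U$ is open, each point $x \in K$ lies in some basis element contained in $U$; by compactness of $K$ I extract a finite subcover $K \subseteq U_{i_1} \cup \cdots \cup U_{i_n}$ with each $U_{i_j} \subseteq U$. Because $U$ is locally compact Hausdorff, there is a partition of unity subordinate to this cover, i.e.\ functions $\chi_j \in C_c(U_{i_j})$ with $0 \le \chi_j \le 1$ and $\sum_{j=1}^n \chi_j = 1$ on $K$. Then each product $\chi_j f$ is continuous on $U_{i_j}$ with support contained in $\supp(\chi_j) \subseteq U_{i_j}$, so $\chi_j f \in C_c(U_{i_j})$; and since $\sum_j \chi_j \equiv 1$ on $K = \supp(f)$ while $f$ vanishes off $K$, we obtain $f = \sum_{j=1}^n \chi_j f \in \Span\bigcup_i C_c(U_i)$. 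This proves the main assertion.

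For the final (``in particular'') statement I would simply note that the open bisections of $G$ form an open basis consisting of open Hausdorff subsets, since they are even locally compact Hausdorff, so applying the main assertion with $\{U_i\}$ taken to be the collection of all open bisections yields the claim. The only point requiring care throughout is the construction of the partition of unity in the space $G$, which is merely locally Hausdorff; this is dealt with once we confine attention to the genuinely locally compact Hausdorff open set $U$, after which the classical Urysohn-type construction applies verbatim. I expect this local-compactness reduction to be the crux, with the covering argument and the verification $\chi_j f \in C_c(U_{i_j})$ being routine bookkeeping.
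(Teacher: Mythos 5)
Your proof is correct and is essentially the paper's own argument: the paper's entire proof is the single line ``This follows from a partition of unity argument,'' which is exactly what you carry out. Your localization to the open Hausdorff set $U$ (noting it is locally compact via the open-bisection basis) and the subordinate partition of unity $\sum_j \chi_j = 1$ on $\supp(f)$ are precisely the details the paper leaves implicit.
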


\begin{proof}
	This follows from a partition of unity argument.\qed
\end{proof}

\begin{defi}
	Let $G$ be an \'etale groupoid.
	Recall that $\mathcal{C}(G)$ is equipped with a structure of $\C$-vector space by pointwise addition and scalar multiplication.
	The multiplication $f*g\in \mathcal{C}(G)$ and involution $f^*\in \mathcal{C}(G)$ of $f,g\in \mathcal{C}(G)$ are defined by
	\[
	f*g(\gamma)=\sum_{\beta\in G_{s(\gamma)}}f(\gamma\beta^{-1})g(\beta),\,\,
	f^*(\gamma)=\overline{f(\gamma^{-1})}.
	\]
	Then, $\mathcal{C}(G)$ is a *-algebra under these operations.
\end{defi}
One can see that $C_c(G^{(0)})\subset \mathcal{C}(G)$ is a *-subalgebra.

\begin{lem}\label{universal norm}
	Let $G$ be an \'etale groupoid and $f\in \mathcal{C}(G)$.
	Then, there exists $C_f\geq 0$ such that $\lVert\rho (f)\rVert \leq C_f$ for all Hilbert spaces $H$ and *-homomorphisms $\rho\colon \mathcal{C}(G)\to B(H)$.
\end{lem}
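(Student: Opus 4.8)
The plan is to reduce the statement in two stages, first to functions supported on a single open bisection and then to functions on the unit space, where the desired bound becomes a spectral estimate that does not depend on $\rho$. First I would invoke Proposition \ref{supported on bisection} to write $f=\sum_{k=1}^{n}f_k$ with each $f_k\in C_c(U_k)$ for some open bisection $U_k\subset G$. Since $\rho$ is a $*$-homomorphism, the triangle inequality gives $\lVert\rho(f)\rVert\le\sum_{k=1}^{n}\lVert\rho(f_k)\rVert$ for every such $\rho$. Hence it suffices to bound $\lVert\rho(g)\rVert$ uniformly in $\rho$ for a single $g\in C_c(U)$ with $U$ an open bisection, and then take $C_f$ to be the sum of the resulting constants.

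Fixing such a $g$, I would use the C*-identity $\lVert\rho(g)\rVert^{2}=\lVert\rho(g)^{*}\rho(g)\rVert=\lVert\rho(g^{*}*g)\rVert$ and observe that $g^{*}*g$ lands in $C_c(G^{(0)})$. Indeed $g^{*}\in C_c(U^{-1})$, so by Proposition \ref{multi} the product $g^{*}*g$ is supported on the open bisection $U^{-1}U$; since $r|_U$ is injective, any $\alpha^{-1}\beta\in U^{-1}U$ forces $r(\alpha)=r(\beta)$, whence $\alpha=\beta$ and $\alpha^{-1}\beta=s(\alpha)\in G^{(0)}$, so $U^{-1}U\subset G^{(0)}$. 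A direct computation gives $(g^{*}*g)(x)=\sum_{\beta\in G_x}|g(\beta)|^{2}$ for $x\in G^{(0)}$, a sum with at most one nonzero term because $U$ is a bisection; thus $g^{*}*g$ is a nonnegative element of $C_c(G^{(0)})$ with $\lVert g^{*}*g\rVert_{\infty}\le\lVert g\rVert_{\infty}^{2}$. This leaves me to bound $\lVert\rho(h)\rVert$ by $\lVert h\rVert_{\infty}$ for nonnegative $h\in C_c(G^{(0)})$.

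For this last, decisive step I would first note that on $C_c(G^{(0)})$ the convolution $*$ restricts to pointwise multiplication, so $C_c(G^{(0)})$ is a commutative $*$-subalgebra and $\rho(h)=\rho((\sqrt{h})^{*}*\sqrt{h})$ is a positive operator. Given a real $\lambda>M:=\lVert h\rVert_{\infty}$, set $b:=h/\lambda$, so $b\ge 0$ and $\lVert b\rVert_{\infty}<1$; then $c:=\sum_{n\ge 1}b^{n}=b/(1-b)$ is again a nonnegative element of $C_c(G^{(0)})$, continuous and supported in $\supp h$, satisfying the pointwise identity $c-b=bc$. Applying $\rho$ and using that $b$ and $c$ commute, one checks that $(1-\rho(b))(1+\rho(c))=(1+\rho(c))(1-\rho(b))=1$ in $B(H)$, so $\lambda-\rho(h)=\lambda(1-\rho(b))$ is invertible. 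Since this holds for every real $\lambda>M$ and $\rho(h)\ge 0$, the spectrum of $\rho(h)$ lies in $[0,M]$, giving $\lVert\rho(h)\rVert\le\lVert h\rVert_{\infty}$. Tracing back, $\lVert\rho(g)\rVert\le\lVert g\rVert_{\infty}$, and $C_f:=\sum_{k=1}^{n}\lVert f_k\rVert_{\infty}$ works.

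The hard part will be this third step. The subtlety is that $C_c(G^{(0)})$ is only a dense $*$-subalgebra of the commutative C*-algebra $C_0(G^{(0)})$, so the automatic contractivity of $*$-homomorphisms between C*-algebras is not directly available; one cannot simply assert $\lVert\rho(h)\rVert\le\lVert h\rVert_\infty$ and must instead argue at the level of the spectrum by exhibiting the explicit resolvent $1+\rho(c)$ inside the image algebra. The first two steps, by contrast, are essentially bookkeeping built on Propositions \ref{supported on bisection} and \ref{multi} together with the C*-identity.
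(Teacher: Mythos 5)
Your proof is correct, and its first two reductions (the decomposition via Proposition \ref{supported on bisection} plus the triangle inequality, then the C*-identity $\lVert\rho(g)\rVert^2=\lVert\rho(g^**g)\rVert$ to land in $C_c(G^{(0)})$) are exactly the paper's. Where you genuinely diverge is the decisive third step. The paper disposes of it in one line by observing that $C_c(G^{(0)})$ is a \emph{union} of commutative C*-algebras: for each compact $K\subset G^{(0)}$, the set $A_K=\{h\in C_0(G^{(0)})\mid \supp h\subset K\}$ is closed under the sup norm, hence an honest C*-algebra, every $h\in C_c(G^{(0)})$ lies in some $A_K$, and automatic contractivity of *-homomorphisms out of a C*-algebra gives $\lVert\rho(h)\rVert\le\lVert h\rVert_\infty$ at once. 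So the obstruction you name --- that contractivity is unavailable because $C_c(G^{(0)})$ is merely a dense *-subalgebra of $C_0(G^{(0)})$ --- is not actually an obstruction; density is the wrong lens, since one can exhaust $C_c(G^{(0)})$ by genuine C*-subalgebras. Your workaround is nevertheless valid, and is in effect a self-contained reproof of that contractivity: the quasi-inverse $c=b/(1-b)$ does lie in $C_c(G^{(0)})$, you apply $\rho$ only to the finite algebraic identity $c-b=bc$ (so no continuity of $\rho$ is ever invoked, which is the whole point of the lemma), and positivity of $\rho(h)$ plus invertibility of $\lambda-\rho(h)$ for $\lambda>\lVert h\rVert_\infty$ pins the spectrum inside $[0,\lVert h\rVert_\infty]$. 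What your route buys is independence from the (nontrivial) theorem that *-homomorphisms from C*-algebras are norm-decreasing; what the paper's route buys is brevity. Either way one arrives at the same constant $C_f=\sum_k\lVert f_k\rVert_\infty$.
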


\begin{proof}
	We may assume that $f\in C_c(U)$ for some open bisection $U\subset G$.
	One can see that $f^**f\in C_c(G^{(0)})$.
	Since $C_c(G^{(0)})$ is a union of commutative C*-algebras,
	we have $\lVert\rho(h)\rVert\leq \sup_{x\in G^{(0)}}\lvert h(x)\rvert$ for all $h\in C_c(G^{(0)})$. 
	Then,
	we obtain $\lVert\rho(f)\rVert^2=\lVert\rho( f^**f)\rVert\leq \sup_{x\in G^{(0)}} \lvert f^**f(x)\rvert<\infty$.
	\qed
\end{proof}

The universal norm of $f\in \mathcal{C}(G)$ is defined by
\[
\lVert f\rVert \defeq \sup\{\lVert \rho(f) \rVert \mid \text{$\rho\colon \mathcal{C}(G)\to B(H)$ is a *-representation}\}.
\]
By Lemma \ref{universal norm},
the universal norm takes values in $[0,\infty)$.
Since the left regular representations of $\mathcal{C}(G)$ induces a faithful *-representation of $\mathcal{C}(G)$,
the universal norm becomes a C*-norm (see \cite[Section 4]{LisaRuyAidan}).
The completion of $\mathcal{C}(G)$ by universal norm is denoted by $C^*(G)$.
We shall remark that every *-representation of $\mathcal{C}(G)$ induces the *-representation of $C^*(G)$. 
Note that the inclusion $C_c(G^{(0)})\subset \mathcal{C}(G)$ extends to $C_0(G^{(0)})\subset C^*(G)$.

\begin{prop}\label{canonical surjection}
	Let $G$ be an \'etale groupoid and $F\subset G^{(0)}$ be a closed invariant set.
	Then,
	the restriction $\mathcal{C}(G)\ni f\mapsto f|_{G_F}\in \mathcal{C}(G_F)$ extends to the surjective *-homomorphism $C^*(G)\to C^*(G_F)$.
\end{prop}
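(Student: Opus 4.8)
The plan is to first check that the restriction map $r_F\colon \mathcal{C}(G)\to\mathcal{C}(G_F)$, $f\mapsto f|_{G_F}$, is a well-defined *-homomorphism, then that it is contractive for the universal norms so that it extends to the completions, and finally that it is surjective. For well-definedness, by Proposition \ref{supported on bisection} it suffices to treat $f\in C_c(U)$ for an open Hausdorff $U\subset G$. Since $F$ is closed, $U\cap G_F$ is an open Hausdorff subset of the \'etale groupoid $G_F$ (Proposition \ref{restriction to inv is etale}), and $\supp(f|_{U\cap G_F})\subset\supp(f)\cap G_F$ is compact; hence $f|_{G_F}\in C_c(U\cap G_F)\subset\mathcal{C}(G_F)$. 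That $r_F$ preserves the involution is immediate from $f^*(\gamma)=\overline{f(\gamma^{-1})}$ together with the fact that $G_F$ is closed under inversion. For the convolution the crucial point is that invariance of $F$ gives $(G_F)_{s(\gamma)}=G_{s(\gamma)}$ whenever $s(\gamma)\in F$: if $s(\beta)=s(\gamma)\in F$ then $r(\beta)\in F$, so $\beta\in G_F$, and likewise $\gamma\beta^{-1}\in G_F$. Thus the sums defining $(f*g)|_{G_F}(\gamma)$ and $(f|_{G_F})*(g|_{G_F})(\gamma)$ run over the same index set and agree term by term.

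For the extension, let $\pi\colon\mathcal{C}(G_F)\to B(H)$ be any *-representation. Then $\pi\circ r_F$ is a *-representation of $\mathcal{C}(G)$, so by the definition of the universal norm one has $\lVert\pi(r_F(f))\rVert=\lVert(\pi\circ r_F)(f)\rVert\leq\lVert f\rVert$ for every $f\in\mathcal{C}(G)$; taking the supremum over all such $\pi$ yields $\lVert r_F(f)\rVert\leq\lVert f\rVert$. Hence $r_F$ is contractive and extends to a *-homomorphism $\Phi\colon C^*(G)\to C^*(G_F)$.

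The main obstacle is surjectivity. Since the image of a C*-algebra under a *-homomorphism is closed (the map factors through an isometric *-isomorphism of $C^*(G)/\ker\Phi$ onto its image), it suffices to show that $r_F$ already maps $\mathcal{C}(G)$ onto $\mathcal{C}(G_F)$, which is dense in $C^*(G_F)$. So fix $h\in C_c(V)$ for an open bisection $V$ of $G_F$ and write $K\defeq\supp(h)$. For each $\gamma\in K$ choose an open bisection $B_\gamma$ of $G$ with $\gamma\in B_\gamma$ and $B_\gamma\cap G_F\subset V$ (possible since the open bisections form a basis of $G$ and $V$ is open in $G_F$); by compactness finitely many $B_1,\dots,B_n$ cover $K$. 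Take a partition of unity $\{\psi_j\}$ on $V$ subordinate to $\{B_j\cap G_F\}$ with $\sum_j\psi_j=1$ on $K$, and set $h_j\defeq\psi_j h\in C_c(B_j\cap G_F)$, so that $\sum_j h_j=h$. Each $B_j\cap G_F$ is closed in the locally compact Hausdorff space $B_j$, so by the Tietze extension theorem followed by multiplication with a compactly supported Urysohn cutoff equal to $1$ on $\supp(h_j)$, we obtain $\tilde h_j\in C_c(B_j)$ with $\tilde h_j|_{G_F}=h_j$. Then $\tilde h\defeq\sum_j\tilde h_j\in\mathcal{C}(G)$ satisfies $r_F(\tilde h)=h$. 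The delicate points are arranging the $B_j$ to be bisections of $G$ meeting $G_F$ inside $V$, and verifying that each cut-off Tietze extension restricts back \emph{exactly} to $h_j$ on all of $G_F$ (not merely on $B_j\cap G_F$), which holds because $\tilde h_j$ is supported in $B_j$ and $h_j$ is supported in $B_j\cap G_F$.
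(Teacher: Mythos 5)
Your proof is correct and follows essentially the same route as the paper's: reduction to $f\in C_c(U)$ for Hausdorff open $U\subset G$ to see the restriction lands in $\mathcal{C}(G_F)$, extension to $C^*(G)$ by the universal property, surjectivity onto the dense subalgebra $\mathcal{C}(G_F)$ via the Tietze extension theorem, and closedness of the range of a *-homomorphism. The only differences are cosmetic: you inline the partition-of-unity argument that the paper delegates to Proposition \ref{supported on bisection} applied to $G_F$, and you spell out the convolution-index computation (using invariance of $F$) that the paper dismisses as ``direct calculations.''
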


\begin{proof}
	First,
	we check that $f|_{G_F}\in \mathcal{C}(G_F)$ for all $f\in \mathcal{C}(G)$.
	We may assume that $f\in C_c(U)$ for some open Hausdorff subset $U\subset G$,
	since $\mathcal{C}(G)$ is spanned by $\bigcup_U C_c(U)$,
	where the union is taken over all open Hausdorff subsets $U\subset G$.
	Defining $V\defeq G_F\cap U$, $V$ is a Hausdorff open subset of $G_F$.
	Then $f|_{G_F}$ is contained in $C_c(V)\subset \mathcal{C}(G_F)$.
	
	Direct calculations imply that the restriction $\mathcal{C}(G)\ni f\mapsto f|_{G_F}\in \mathcal{C}(G_F)$ is a *-homomorphism.
	
	Next,
	we show that the restriction $\mathcal{C}(G)\ni f\mapsto f|_{G_F}\in \mathcal{C}(G_F)$ is surjective.
	Note that $\{G_F\cap U\mid \text{$U\subset G$ is an open Hausdorff subset}\}$ is an open basis of $G_F$.
	Take an open Hausdorff subset $U\subset G_F$ and $f\in C_c(G_F\cap U)$ arbitrarily.
	Put $V\defeq G_F\cap U$.
	Since $V\subset U$ is a closed subset of $U$ and $f\in C_c(V)$, 
	there exists $\tilde{f}\in C_c(U)$ such that $\tilde {f}|_{V}=f$ by the Tietze extension theorem.
	Now, we obtain $\tilde{f}\in \mathcal{C}(G)$ such that $\tilde{f}|_{G_F}=f$.
	By Proposition \ref{supported on bisection}, $\mathcal{C}(G_F)$ is the linear span of $\bigcup_{U} C_c(G_F\cap U)$, where the union is taken over all open Hausdorff subsets $U\subset G$. 
	Therefore, the restriction $\mathcal{C}(G)\ni f \mapsto f|_{G_F}\in \mathcal{C}(G_F)$ is surjective.	
	
	By the universality of $C^*(G)$,
	the restriction $\mathcal{C}(G)\ni f \mapsto f|_{G_F}\in \mathcal{C}(G_F)$ extends to the *-homomorphism $C^*(G)\to C^*(G_F)$.
	Since the image of $C^*(G)$ is dense in $C^*(G_F)$, $C^*(G)\to C^*(G_F)$ is surjective.
	\qed
\end{proof}

\section{Quotients of \'etale groupoids}
In this section,
we introduce the notion of quotient \'etale groupoids.

\subsection{Quotients of \'etale groupoids}
\begin{defi}
	Let $G$ be a groupoid.
	A subgroupoid $H\subset G$ is said to be normal if
	\begin{enumerate}
		\item $G^{(0)}\subset H\subset \Iso(G)$ holds and
		\item $\alpha H\alpha^{-1}\subset H$ holds for all $\alpha\in G$.
	\end{enumerate} 
\end{defi}

\begin{defi}
	Let $G$ be a groupoid and $H\subset G$ be a normal subgroupoid.
	Then,
	we define an equivalence relation $\sim$ on $G$ by declaring that $\alpha\sim\beta$ if $s(\alpha)=s(\beta)$ and $\alpha\beta^{-1}\in H$.
	We denote the quotient set $G/{\sim}$ by $G/H$. 
\end{defi}

\begin{lem}\label{s and r of quotient}
	Let $G$ be a groupoid and $H\subset G$ be a normal subgroupoid.
	Suppose that $\alpha,\alpha'\in G$ satisfy $\alpha\sim\alpha'$.
	Then,
	we have $s(\alpha)=s(\alpha')$ and $r(\alpha)=r(\alpha')$.
\end{lem}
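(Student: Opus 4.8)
The plan is to obtain the first equality directly from the definition of $\sim$, and to deduce the second from the containment $H\subset\Iso(G)$ that is built into the definition of a normal subgroupoid.

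First I would unwind the hypothesis: $\alpha\sim\alpha'$ means precisely that $s(\alpha)=s(\alpha')$ and $\alpha(\alpha')^{-1}\in H$. The first condition is already the first assertion of the lemma, so there is nothing further to prove for it. Before going on I would check that the product $\alpha(\alpha')^{-1}$ is legitimate: since $s(\alpha)=s(\alpha')=r\bigl((\alpha')^{-1}\bigr)$, the pair $\bigl(\alpha,(\alpha')^{-1}\bigr)$ lies in $G^{(2)}$, so the expression is composable and the membership $\alpha(\alpha')^{-1}\in H$ is meaningful.

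For the second assertion I would set $\gamma\defeq\alpha(\alpha')^{-1}$ and invoke normality of $H$, which gives $H\subset\Iso(G)$ and hence $\gamma\in\Iso(G)$, that is, $s(\gamma)=r(\gamma)$. Computing the two endpoints from the groupoid axioms, one has $s(\gamma)=s\bigl((\alpha')^{-1}\bigr)=r(\alpha')$ (using the identity $s(\delta^{-1})=r(\delta)$ noted earlier for all $\delta\in G$) and $r(\gamma)=r(\alpha)$. Equating these two expressions yields $r(\alpha)=r(\alpha')$, as required.

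I do not anticipate any genuine obstacle here: the statement is a straightforward unwinding of the definitions of $\sim$ and of a normal subgroupoid, combined with the elementary source and range identities. The only point deserving a moment's attention is the composability check above, which must be in place before $\alpha(\alpha')^{-1}$ can be asserted to lie in $\Iso(G)$.
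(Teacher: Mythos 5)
Your proof is correct and follows essentially the same route as the paper: both extract $s(\alpha)=s(\alpha')$ directly from the definition of $\sim$, then use $\alpha(\alpha')^{-1}\in H\subset\Iso(G)$ together with the identities $r(\alpha\beta)=r(\alpha)$, $s(\alpha\beta)=s(\beta)$ and $s(\delta^{-1})=r(\delta)$ to conclude $r(\alpha)=r(\alpha')$. Your explicit composability check is a minor addition the paper leaves implicit; nothing else differs.
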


\begin{proof}
	It follows that $s(\alpha)=s(\alpha')$ from the definition of $\alpha\sim\alpha'$.
	Since $\alpha\alpha'^{-1}\in H\subset \Iso(G)$,
	we have $r(\alpha)=r(\alpha\alpha'^{-1})=s(\alpha\alpha'^{-1})=r(\alpha')$.\qed
\end{proof}

\begin{lem}\label{multi of quotient}
	Let $G$ be a groupoid and $H\subset G$ be a normal subgroupoid.
	Suppose that $\alpha,\alpha',\beta,\beta'\in G$ satisfy $\alpha\sim\alpha'$, $\beta\sim\beta'$, $s(\alpha)=r(\beta)$.
	Then,
	we have $s(\alpha')=r(\beta')$ and $\alpha\beta\sim \alpha'\beta'$.
\end{lem}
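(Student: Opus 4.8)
The plan is to verify the two assertions in turn: the first is a direct consequence of Lemma \ref{s and r of quotient}, and the second reduces to a single membership computation in $H$.

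For the first assertion I would simply chain equalities. By Lemma \ref{s and r of quotient}, $\alpha\sim\alpha'$ gives $s(\alpha)=s(\alpha')$ and $\beta\sim\beta'$ gives $r(\beta)=r(\beta')$. Combining these with the hypothesis $s(\alpha)=r(\beta)$ yields $s(\alpha')=s(\alpha)=r(\beta)=r(\beta')$. In particular $(\alpha',\beta')\in G^{(2)}$, so the product $\alpha'\beta'$ is defined.

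For the second assertion I must check both defining conditions of $\sim$. The source condition is immediate: $s(\alpha\beta)=s(\beta)=s(\beta')=s(\alpha'\beta')$, using $s(\beta)=s(\beta')$ from $\beta\sim\beta'$. It then remains to prove the membership condition $(\alpha\beta)(\alpha'\beta')^{-1}\in H$. Expanding the inverse and cancelling, I would write $(\alpha\beta)(\alpha'\beta')^{-1}=\alpha(\beta\beta'^{-1})\alpha'^{-1}$ and set $h\defeq\beta\beta'^{-1}$, which lies in $H$ because $\beta\sim\beta'$. Note that $h$ is a loop at $x\defeq s(\alpha)=r(\beta)$, since $H\subset\Iso(G)$.

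The crux, and the only real obstacle, is that $\alpha h\alpha'^{-1}$ is a \emph{mixed} conjugate, carrying $\alpha'^{-1}$ rather than $\alpha^{-1}$ on the right, so normality of $H$ does not apply directly. The trick I would use is to insert $\alpha^{-1}\alpha=s(\alpha)$ and regroup,
\[
\alpha h\alpha'^{-1}=\bigl(\alpha h\alpha^{-1}\bigr)\bigl(\alpha\alpha'^{-1}\bigr),
\]
which is justified by the unit laws once one checks the source–range compatibilities (both factors are loops at $r(\alpha)=r(\alpha')$). Now the first factor $\alpha h\alpha^{-1}$ lies in $H$ by normality, since $\alpha H\alpha^{-1}\subset H$, and the second factor $\alpha\alpha'^{-1}$ lies in $H$ precisely because $\alpha\sim\alpha'$. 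As $H$ is a subgroupoid, hence closed under multiplication, their product lies in $H$, giving $(\alpha\beta)(\alpha'\beta')^{-1}\in H$ and therefore $\alpha\beta\sim\alpha'\beta'$. The bookkeeping of source and range maps needed to ensure each multiplication is defined is routine and I would relegate it to a single sentence.
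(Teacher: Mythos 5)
Your proof is correct and follows essentially the same route as the paper: the same use of Lemma \ref{s and r of quotient} for the first assertion, and the same key factorization $(\alpha\beta)(\alpha'\beta')^{-1}=(\alpha\beta\beta'^{-1}\alpha^{-1})(\alpha\alpha'^{-1})$, with the first factor in $H$ by normality and the second by $\alpha\sim\alpha'$.
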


\begin{proof}
	By Lemma \ref{s and r of quotient}, we have $s(\alpha)=s(\alpha')$ and $r(\beta)=r(\beta')$.
	Using $s(\alpha)=r(\beta)$, we obtain $s(\alpha')=r(\beta')$.
	
	The last assertion follows from a simple calculation.
	Indeed,
	we have $s(\alpha\beta)=s(\beta)=s(\beta')=s(\alpha'\beta')$ and
	\[\alpha\beta(\alpha'\beta')^{-1}=\alpha\beta\beta'^{-1}\alpha'^{-1}=(\alpha\beta\beta'^{-1}\alpha^{-1})(\alpha\alpha'^{-1})\in H.
	\]
	Note that $\alpha\beta\beta'^{-1}\alpha^{-1}\in H$, since $H$ is normal.
	\qed
\end{proof}
\begin{defi}
	Let $G$ be a groupoid, $H\subset G$ be a normal subgroupoid and $q\colon G\to G/H$ be the quotient map.
	A groupoid structure of $G/H$ is defined as the following;
	\begin{itemize}
		\item a unit space $(G/H)^{(0)}$ is $q(G^{(0)})$, which can be identified with $G^{(0)}$ via an injection $q|_{G^{(0)}}$,
		\item source and range maps $s,r\colon G/H\to G^{(0)}$ are defined by $s(q(\gamma))\defeq q(s(\gamma))$, $r(q(\gamma))\defeq q(r(\gamma))$ for $\gamma\in G$,
		\item a multiplication of $G/H$ is defined by $q(\alpha)q(\beta)\defeq 
		q(\alpha\beta)$ for $\alpha,\beta\in G$ with $s(\alpha)=r(\beta)$.
	\end{itemize}
	One can see that the inverse map of $G/H$ satisfies $q(\gamma)^{-1}=q(\gamma^{-1})$ for $\gamma\in G$.
	Then,
	$G/H$ is a groupoid under these operations.
\end{defi}
\begin{rem}
	The operations of $G/H$ are well-defined by Lemma \ref{s and r of quotient} and Lemma \ref{multi of quotient}.
\end{rem}

If $G$ is a topological groupoid,
then we consider the quotient topology as a topology of $G/H$.

\begin{lem}\label{general quotient is local homeo}
	Let $G$ be an \'etale groupoid and $H\subset G$ be an open normal subgroupoid.
	Then,
	the quotient map $q\colon G\to G/H$ is an open map.
	Furthermore,
	$q$ is a local homeomorphism.
\end{lem}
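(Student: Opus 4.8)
The plan is to first establish that $q$ is an open map, and then to use this together with the fact that open bisections form a basis for the topology of $G$ to deduce that $q$ is a local homeomorphism.

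For openness, since $G/H$ carries the quotient topology, it suffices to show that $q^{-1}(q(U))$ is open in $G$ for every open $U\subset G$. My first step would be to identify this saturation explicitly, and I claim that $q^{-1}(q(U))=HU$, where $HU$ is the product set appearing in Proposition \ref{multi}. Indeed, $\gamma\in q^{-1}(q(U))$ means $\gamma\sim\alpha$ for some $\alpha\in U$, that is, $s(\gamma)=s(\alpha)$ and $h\defeq\gamma\alpha^{-1}\in H$; since $H\subset\Iso(G)$ this forces $s(h)=r(\alpha)$, so that $\gamma=h\alpha\in HU$. Conversely, for $h\in H$ and $\alpha\in U$ with $s(h)=r(\alpha)$, a direct computation gives $(h\alpha)\alpha^{-1}=h\in H$ and $s(h\alpha)=s(\alpha)$, whence $h\alpha\sim\alpha$ and $h\alpha\in q^{-1}(q(U))$. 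As $H$ and $U$ are both open, Proposition \ref{multi} shows that $HU$ is open, so $q^{-1}(q(U))$ is open and therefore $q(U)$ is open in $G/H$. This proves that $q$ is an open map.

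For the local homeomorphism assertion, I would exploit that $q$ is continuous (being a quotient map) and open, so that it only remains to produce, around each point, an open set on which $q$ is injective. The key observation is that $q$ is automatically injective on any bisection: if $U$ is a bisection and $\alpha,\beta\in U$ satisfy $q(\alpha)=q(\beta)$, then $\alpha\sim\beta$ gives $s(\alpha)=s(\beta)$ by Lemma \ref{s and r of quotient}, and injectivity of $s|_U$ forces $\alpha=\beta$. Given any $\gamma\in G$, choosing an open bisection $U\ni\gamma$ (these form a basis of $G$), the restriction $q|_U\colon U\to q(U)$ is a continuous open bijection onto the open set $q(U)$, hence a homeomorphism. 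Therefore $q$ is a local homeomorphism.

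The computation $q^{-1}(q(U))=HU$ is the main point where care is needed, but once the saturation is identified with a product set the openness follows immediately from Proposition \ref{multi}; the injectivity of $q$ on bisections then renders the local homeomorphism statement essentially formal.
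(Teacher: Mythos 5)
Your proof is correct and follows essentially the same route as the paper: identify the saturation $q^{-1}(q(U))$ as a product set (the paper writes it as $UH$, you as $HU$; these coincide here and both are open by Proposition \ref{multi}), and then use injectivity of $q$ on open bisections together with openness of $q$ to get the local homeomorphism. Your write-up merely fills in the details the paper leaves as "one can see," such as the verification of the saturation identity and of injectivity on bisections.
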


\begin{proof}
	Let $U\subset G$ be an open subset.
	Then,
	$q^{-1}(q(U))=UH$ is an open subset of $G$ by Proposition \ref{multi}.
	Hence, $q(U)\subset G/H$ is an open subset by the definition of the quotient topology.
		
	Next,
	we show that the quotient map $q\colon G\to G/H$ is a local homeomorphism.
	Fix a $\gamma\in G$.
	Then,
	take an open bisection $U\subset G$ with $\gamma\in U$.
	One can see that $q|_U$ is injective.
	Since $q$ is an open map,
	$q|_U$ is a homeomorphism onto an open subset $q(U)\subset G$.
	Hence,
	$q$ is a local homeomorphism.
	\qed
\end{proof}

Observe that $q|_{G^{(0)}}\colon G^{(0)}\to (G/H)^{(0)}$ is homeomorphic.

\begin{prop}\label{quotient groupoid}
	Let $G$ be an \'etale groupoid and $H\subset G$ be an open normal subgroupoid.
	Then,
	$G/H$ is an \'etale groupoid.
\end{prop}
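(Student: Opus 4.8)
The plan is to verify the topological conditions that upgrade the already-defined groupoid $G/H$ to an \'etale groupoid, exploiting throughout that $q\colon G\to G/H$ is a continuous, open, surjective local homeomorphism (Lemma \ref{general quotient is local homeo}) and that $q|_{G^{(0)}}$ is a homeomorphism onto $(G/H)^{(0)}$. Since the groupoid operations of $G/H$ are already given, what remains is to check that the multiplication and inversion are continuous, that $(G/H)^{(0)}$ is locally compact Hausdorff in the relative topology, and that the source map is a local homeomorphism. The locally compact Hausdorff property of $(G/H)^{(0)}$ is immediate: it is homeomorphic to $G^{(0)}$ via $q|_{G^{(0)}}$, and $G^{(0)}$ is assumed locally compact Hausdorff.

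For the algebraic operations I would use that $q$, being a continuous open surjection, is a quotient map. Continuity of inversion is easiest: writing $\iota$ for the inversion of $G$, the induced inversion $\iota_{G/H}$ satisfies $\iota_{G/H}\circ q=q\circ\iota$, and since $\iota$ is continuous and $\alpha\sim\alpha'$ forces $\alpha^{-1}\sim\alpha'^{-1}$, the composite $q\circ\iota$ is continuous and constant on the fibers of $q$; the universal property of quotient maps then yields continuity of $\iota_{G/H}$. For the multiplication $m_{G/H}$, the relevant square is $m_{G/H}\circ(q\times q)=q\circ m$ on $G^{(2)}$, where $m$ is the multiplication of $G$ (this is exactly well-definedness via Lemma \ref{multi of quotient}). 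Thus it suffices to show that $(q\times q)|_{G^{(2)}}\colon G^{(2)}\to (G/H)^{(2)}$ is a quotient map. I would establish that it is an open surjection: because $q|_{G^{(0)}}$ is injective, a pair $(q(\alpha),q(\beta))$ is composable in $G/H$ if and only if $s(\alpha)=r(\beta)$, which gives surjectivity; and for a basic open set $W=(A\times B)\cap G^{(2)}$ with $A,B\subset G$ open, the same injectivity gives $(q\times q)(W)=(q(A)\times q(B))\cap (G/H)^{(2)}$, which is open since $q$ is an open map. Hence $(q\times q)|_{G^{(2)}}$ is a quotient map and $m_{G/H}$ is continuous.

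It remains to show the source map $s\colon G/H\to (G/H)^{(0)}$ is a local homeomorphism. Since the open bisections cover $G$ and $q$ is open, the sets $q(U)$ for open bisections $U\subset G$ form an open cover of $G/H$; fix such a $U$. From the proof of Lemma \ref{general quotient is local homeo}, $q|_U$ is a homeomorphism onto the open set $q(U)$. On $U$ the source map restricts to a homeomorphism $s|_U$ onto the open set $s(U)\subset G^{(0)}$, and $q|_{s(U)}$ is a homeomorphism onto the open set $q(s(U))\subset (G/H)^{(0)}$. The identity $s\circ q|_U=q\circ s|_U$ then shows that on $q(U)$ the source map equals $(q\circ s|_U)\circ(q|_U)^{-1}$, a composition of homeomorphisms onto $q(s(U))$; hence $s$ is a local homeomorphism, completing the verification.

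The step I expect to be the main obstacle is the continuity of the multiplication, precisely the claim that $q\times q$ restricts to a quotient (equivalently open) map on the space of composable pairs. This is where the openness of $q$ and the identification of composability in $G/H$ with composability in $G$ are both genuinely used; by contrast, the inversion, the unit space, and the source map reduce quickly to properties of $q$ already recorded in Lemma \ref{general quotient is local homeo} and the observation preceding the proposition.
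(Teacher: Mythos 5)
Your proof is correct, and its skeleton matches the paper's: inversion via the universal property of the quotient topology, the unit space via the homeomorphism $q|_{G^{(0)}}$, and the source map via the same commutative square $s\circ q|_U=q\circ s|_U$ over open bisections. The one genuine divergence is the step you yourself flag as the obstacle: continuity of multiplication. The paper argues pointwise — given $q(\alpha)q(\beta)\in U$ open, it pulls back to $\alpha\beta\in q^{-1}(U)$, uses continuity of multiplication in $G$ to find open $V_1,V_2$ with $V_1V_2\subset q^{-1}(U)$, and pushes forward through the open map $q$, using the set identity $q(V_1)q(V_2)=q(V_1V_2)$. You instead prove the structural statement that $(q\times q)|_{G^{(2)}}\colon G^{(2)}\to (G/H)^{(2)}$ is a continuous open surjection, hence a quotient map, and then get continuity of $m_{G/H}$ from the universal property, with Lemma \ref{multi of quotient} supplying constancy on fibers. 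The two arguments rest on the same hidden fact — composability in $G/H$ lifts to composability in $G$ because $q|_{G^{(0)}}$ is injective (equivalently, sources and ranges are constant on $q$-fibers, Lemma \ref{s and r of quotient}) — which the paper uses silently inside the identity $q(V_1)q(V_2)=q(V_1V_2)$, and which you isolate and verify explicitly via the computation $(q\times q)\bigl((A\times B)\cap G^{(2)}\bigr)=(q(A)\times q(B))\cap (G/H)^{(2)}$. Your route is slightly longer but buys more: it identifies the topology on $(G/H)^{(2)}$ as the quotient topology from $G^{(2)}$, a reusable fact; the paper's neighborhood chase is shorter and never needs to discuss the topology of $(G/H)^{(2)}$ as such.
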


\begin{proof}
	First,
	we show the continuity of the inverse $G/H\ni \delta\mapsto\delta^{-1}\in G/H$.
	One can see that a map $G\ni \gamma\mapsto q(\gamma)^{-1}\in G/H$ is continuous,
	since the following diagram is commutative;
	
	\begin{center}
		\begin{tikzcd}
			& G \arrow[r,"\mathrm{inverse}"] \arrow[d, "q",swap]  & G \arrow[d,"q"]\\
			& G/H \arrow[r,"\mathrm{inverse}"]  &G/H.
		\end{tikzcd}
	\end{center}
	By the definition of the quotient topology,
	the inverse of $G/H$ is continuous.
	
	Next,
	we show that the multiplication of $G/H$ is continuous.
	Take $(q(\alpha),q(\beta))\in (G/H)^{(2)}$ and an open set $U\subset G/H$ such that $q(\alpha)q(\beta)\in U$.
	Since $\alpha\beta \in q^{-1}(U)$ and $q^{-1}(U)\subset G$ is open,
	there exist open sets $V_1,V_2\subset G$ such that $\alpha\in V_1, \beta\in V_2$ and $V_1V_2\subset q^{-1}(U)$.
	Subsets $V_1,V_2\subset G$ are open,
	so $q(V_1),q(V_2)\subset G/H$ are open.
	One can see that $q(\alpha)\in q(V_1)$, $q(\beta)\in q(V_2)$ and $q(V_1)q(V_2)=q(V_1V_2)\subset U$.
	Therefore,
	the multiplication of $G/H$ is continuous.	
	
	Finally,
	we show that $G/H$ is \'etale.
	Since the restriction $q|_{G^{(0)}}$ gives a homeomorphism from $G^{(0)}$ to $(G/H)^{(0)}$,
	$(G/H)^{(0)}$ is a locally compact Hausdorff space.
	One can see that the source map $s\colon G/H\to (G/H)^{(0)}$ is a local homeomorphism,
	since we have Lemma \ref{general quotient is local homeo} and the following diagram is commutative for every open bisection $U\subset G$;
	\begin{center}
		\begin{tikzcd}
			& U \arrow[r,"q"] \arrow[d, "s",swap]  & q(U) \arrow[d,"s"]\\
			& s(U) \arrow[r,"q"]  & s(q(U)).
		\end{tikzcd}
	\end{center}
	Therefore, $G/H$ is an \'etale groupoid.
	\qed
\end{proof}

\begin{defi}
	Let $G_1$ and $G_2$ be groupoids.
	A map $\Phi\colon G_1\to G_2$ is called a groupoid homomorphism if  $(\Phi(\alpha),\Phi(\beta))\in G_2^{(2)}$ and $\Phi(\alpha\beta)=\Phi(\alpha)\Phi(\beta)$ hold for all $(\alpha,\beta)\in G^{(2)}_1$.
\end{defi}

Now, we obtain the next theorem by Lemma \ref{general quotient is local homeo} and Proposition \ref{quotient groupoid},

\begin{thm}\label{quotient groupoid main theorem}
	Let $G$ be an \'etale groupoid and $H\subset G$ be an open normal subgroupoid.
	Then,
	the sequence of the \'etale groupoids
	\begin{center}
		\begin{tikzpicture}[auto]
		\node (H) at (-3,0) {$H$};\node (G) at (0,0) {$G$};\node (G/H) at (3,0) {$G/H$}; 
		\draw[right hook->] (H) to node {\scriptsize inclusion} (G);
		\draw[->>] (G) to node {$\scriptstyle q$} (G/H);
		\end{tikzpicture}
	\end{center}
	is exact, that is, $q^{-1}((G/H)^{(0)})=H$.
\end{thm}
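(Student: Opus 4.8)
The plan is to observe that almost everything in the statement has already been supplied by the preceding results: Proposition \ref{quotient groupoid} shows that $G/H$ is an \'etale groupoid, Lemma \ref{general quotient is local homeo} shows that $q$ is a local homeomorphism, and $H$, being an open subgroupoid of the \'etale groupoid $G$ with $H^{(0)}=G^{(0)}$, is itself \'etale in the relative topology. Hence the three objects and the two maps in the sequence are legitimate, and the entire content of the theorem is the set-theoretic identity $q^{-1}((G/H)^{(0)})=H$ that defines exactness. I would therefore reduce the proof to verifying the two inclusions $H\subseteq q^{-1}((G/H)^{(0)})$ and $q^{-1}((G/H)^{(0)})\subseteq H$.

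For both inclusions I would first unwind the definitions. Recall that $(G/H)^{(0)}=q(G^{(0)})$, so an element $\gamma\in G$ lies in $q^{-1}((G/H)^{(0)})$ precisely when $q(\gamma)=q(x)$ for some $x\in G^{(0)}$, i.e.\ when $\gamma\sim x$ for some unit $x$. By the definition of $\sim$, this means $s(\gamma)=s(x)=x$ together with $\gamma x^{-1}\in H$. The two elementary facts I would use repeatedly are that a unit is its own inverse, $x^{-1}=x$, and that $s(\gamma)=x$ forces $\gamma x=\gamma s(\gamma)=\gamma$; consequently $\gamma x^{-1}=\gamma$.

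For the inclusion $q^{-1}((G/H)^{(0)})\subseteq H$, take $\gamma$ with $\gamma\sim x$ for some $x\in G^{(0)}$; the computation above gives $\gamma=\gamma x^{-1}\in H$. For the reverse inclusion $H\subseteq q^{-1}((G/H)^{(0)})$, take $\gamma\in H$; since $H\subseteq \Iso(G)$ we have $s(\gamma)=r(\gamma)$, so setting $x\defeq s(\gamma)\in G^{(0)}$ the relation $\gamma\sim x$ holds because $s(\gamma)=x=s(x)$ and $\gamma x^{-1}=\gamma\in H$. Hence $q(\gamma)=q(x)\in (G/H)^{(0)}$, so $\gamma\in q^{-1}((G/H)^{(0)})$. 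Combining the two inclusions yields the claimed equality.

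I do not expect a genuine obstacle here: the argument is a direct unwinding of the equivalence relation, and the only point demanding care is the bookkeeping with units, namely recognizing that $\gamma x^{-1}$ collapses to $\gamma$ and that the condition \emph{$q(\gamma)$ is a unit} is, after this collapse, literally the condition $\gamma\in H$. The role of the normality hypothesis $G^{(0)}\subseteq H\subseteq \Iso(G)$ is exactly to make this collapse meaningful: the inclusion $H\subseteq \Iso(G)$ is what lets me set $x=s(\gamma)=r(\gamma)$ from an element of $H$ in the reverse direction.
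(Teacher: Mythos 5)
Your proof is correct and follows the same route as the paper: the paper likewise treats the \'etale structure of the sequence as already settled by Lemma \ref{general quotient is local homeo} and Proposition \ref{quotient groupoid}, and regards the identity $q^{-1}((G/H)^{(0)})=H$ as an immediate unwinding of the definition of $\sim$, which is exactly the computation you spell out (the paper simply omits writing it). One tiny remark: your reverse inclusion does not actually need $H\subset\Iso(G)$, since $\gamma\sim s(\gamma)$ already follows from $s(\gamma)=s(s(\gamma))$ and $\gamma\, s(\gamma)^{-1}=\gamma\in H$; the isotropy condition is what makes the quotient groupoid structure well-defined in the first place, not this particular step.
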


\begin{prop}\label{Hausdorffness of G/H}
	Let $G$ be an \'etale groupoid and $H\subset G$ be an open normal subgroupoid.
	Then, $G/H$ is Hausdorff if and only if $H\subset G$ is closed.
\end{prop}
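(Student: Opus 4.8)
The plan is to prove the two implications separately, the forward direction ($G/H$ Hausdorff $\Rightarrow$ $H$ closed) being routine and the converse being the substantial part.

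For the forward direction I would use the fact that the unit space of any Hausdorff topological groupoid is closed. Writing $\iota\colon (G/H)^{(0)}\hookrightarrow G/H$ for the inclusion, the subset $(G/H)^{(0)}$ is exactly the equalizer $\{\delta\in G/H: \delta=\iota(s(\delta))\}$ of the two continuous self-maps $\id$ and $\iota\circ s$ of $G/H$, hence closed as soon as $G/H$ is Hausdorff. Then the exactness in Theorem \ref{quotient groupoid main theorem} gives $H=q^{-1}((G/H)^{(0)})$, and since $q$ is continuous, $H$ is the preimage of a closed set and therefore closed in $G$.

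For the converse, suppose $H$ is closed. I would invoke the general topological principle that if $q\colon X\to Y$ is a continuous open surjection, then $Y$ is Hausdorff if and only if the relation $R\defeq\{(x,x')\in X\times X: q(x)=q(x')\}$ is closed in $X\times X$. The quotient map $q$ is an open surjection by Lemma \ref{general quotient is local homeo}, so it suffices to show that $R$ is closed in $G\times G$. Unwinding the definition of $\sim$, we have
\[
R=\{(\alpha,\beta)\in G\times G: s(\alpha)=s(\beta),\ \alpha\beta^{-1}\in H\}.
\]
I would realize $R$ as a preimage: the fiber product $D\defeq\{(\alpha,\beta)\in G\times G: s(\alpha)=s(\beta)\}$ equals $(s\times s)^{-1}(\Delta_{G^{(0)}})$ and is closed in $G\times G$ because $G^{(0)}$ is Hausdorff (so $\Delta_{G^{(0)}}$ is closed) and $s\times s$ is continuous. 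On $D$ the map $\Psi\colon D\to G$, $\Psi(\alpha,\beta)\defeq\alpha\beta^{-1}$, is well defined and continuous, being the composite of $(\alpha,\beta)\mapsto(\alpha,\beta^{-1})$ into $G^{(2)}$ followed by the multiplication. Since $R=\Psi^{-1}(H)$ and $H$ is closed, $R$ is closed in $D$, hence closed in $G\times G$, and the topological principle then yields that $G/H$ is Hausdorff.

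The point requiring the most care is the \emph{only if} half of the topological principle, which I would prove directly: given $q(\alpha)\neq q(\beta)$ the pair $(\alpha,\beta)\notin R$, so closedness of $R$ provides a basic open $V_1\times V_2\ni(\alpha,\beta)$ disjoint from $R$; openness of $q$ then makes $q(V_1)$ and $q(V_2)$ the desired neighborhoods, their disjointness being precisely the statement $(V_1\times V_2)\cap R=\emptyset$. I must also check that $\Psi$ genuinely lands where multiplication is defined, namely that $(\alpha,\beta^{-1})\in G^{(2)}$ because $s(\alpha)=s(\beta)=r(\beta^{-1})$ holds on $D$, and that the closedness of $\Delta_{G^{(0)}}$ really uses the standing assumption that $G^{(0)}$ is Hausdorff.
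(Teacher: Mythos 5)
Your proof is correct, but the substantial half (closed $H$ $\Rightarrow$ Hausdorff $G/H$) follows a genuinely different route from the paper's. The paper reduces both directions to the étale-specific criterion that an étale groupoid is Hausdorff if and only if its unit space is closed (citing \cite[Lemma 2.3.2]{asims}): the forward direction pulls $(G/H)^{(0)}$ back along $q$ exactly as you do, while the converse simply notes that $(G/H)\setminus(G/H)^{(0)}=q(G\setminus H)$ is open because $q$ is open and $q^{-1}((G/H)^{(0)})=H$, so the unit space of $G/H$ is closed and the cited lemma finishes the argument. You instead bypass that lemma entirely, invoking the general topological principle that for a continuous open surjection the quotient is Hausdorff iff the graph of the equivalence relation is closed, and verifying closedness of $R=\Psi^{-1}(H)$ via continuity of $\Psi(\alpha,\beta)=\alpha\beta^{-1}$ on the closed fiber product $D=(s\times s)^{-1}(\Delta_{G^{(0)}})$. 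What the paper buys is brevity, at the cost of an external citation whose nontrivial half (closed unit space implies Hausdorff, which genuinely needs étaleness) is hidden; what you buy is a self-contained argument that in effect inlines that hidden content, and your version of the converse needs only openness of $q$ and Hausdorffness of $G^{(0)}$ rather than étaleness of $G/H$. Two small remarks: your forward direction's equalizer argument (the equalizer of $\id$ and $\iota\circ s$ in a Hausdorff space is closed) is a clean proof of the easy half of the cited lemma, so that direction is essentially the paper's; and you mislabel the half of the topological principle you prove in detail --- as you stated the principle, the direction you need and actually prove (closed $R$ plus open $q$ implies Hausdorff quotient) is the \emph{if} half, not the \emph{only if} half, though the argument you give for it is the right one and is correct.
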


\begin{proof}
	Recall that an \'etale groupoid $G$ is Hausdorff if and only if its unit space $G^{(0)}$ is a closed subset of $G$ (for example, see \cite[Lemma 2.3.2]{asims}). 
	If $G/H$ is Hausdorff,
	$(G/H)^{(0)}\subset G/H$ is closed.
	Hence, $H=q^{-1}((G/H)^{(0)})$ is a closed subset of $G$.
	
	Suppose that $H\subset G$ is closed.
	Since $q$ is an open map,
	$(G/H)\setminus (G/H)^{(0)} =q(G\setminus H)\subset G/H$ is open.
	Hence,
	$(G/H)^{(0)}\subset G/H$ is closed, which implies that $G/H$ is Hausdorff.\qed
\end{proof}

\begin{prop}\label{Iso circ is normal sub}
	Let $G$ be an \'etale groupoid.
	Then,
	the interior of isotropy $\Iso(G)^{\circ}\subset \Iso(G)$ is a normal subgroupoid.
\end{prop}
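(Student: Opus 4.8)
The plan is to verify, for $H \defeq \Iso(G)^{\circ}$, the two defining conditions of a normal subgroupoid, after first confirming that $\Iso(G)^{\circ}$ is genuinely a subgroupoid. The inclusion $\Iso(G)^{\circ} \subset \Iso(G)$ is immediate, and since $G^{(0)}$ is open in $G$ by Proposition \ref{local homeo} and is visibly contained in $\Iso(G)$, we get $G^{(0)} \subset \Iso(G)^{\circ}$; this establishes condition (1). For closure under inversion I would note that the inversion map $\gamma \mapsto \gamma^{-1}$ is a homeomorphism of $G$ carrying $\Iso(G)$ onto itself (because $s(\gamma^{-1}) = r(\gamma)$ and $r(\gamma^{-1}) = s(\gamma)$), and a homeomorphism preserving a set preserves its interior, so $(\Iso(G)^{\circ})^{-1} = \Iso(G)^{\circ}$.

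For closure under multiplication, the key observation is that if $\delta_1, \delta_2 \in \Iso(G)$ are composable then $s(\delta_1) = r(\delta_1) = r(\delta_2) = s(\delta_2)$, which forces $\delta_1\delta_2 \in \Iso(G)$; hence $\Iso(G)^{\circ}\,\Iso(G)^{\circ} \subset \Iso(G)$. But $\Iso(G)^{\circ}\,\Iso(G)^{\circ}$ is open by Proposition \ref{multi}, so it lies inside $\Iso(G)^{\circ}$, giving closure under multiplication. Together with the previous paragraph this shows $\Iso(G)^{\circ}$ is a subgroupoid satisfying condition (1).

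The remaining and most delicate point is condition (2): $\alpha\, \Iso(G)^{\circ}\, \alpha^{-1} \subset \Iso(G)^{\circ}$ for every $\alpha \in G$. Here I would fix an open bisection $U$ with $\alpha \in U$; then $U^{-1}$ is open because inversion is a homeomorphism (as in the first step), and consequently $U\, \Iso(G)^{\circ}\, U^{-1}$ is open by two applications of Proposition \ref{multi}. The crux is to show $U\, \Iso(G)^{\circ}\, U^{-1} \subset \Iso(G)$: given a composable product $\alpha_1 \gamma \beta^{-1}$ with $\alpha_1, \beta \in U$ and $\gamma \in \Iso(G)^{\circ}$, composability together with $s(\gamma) = r(\gamma)$ forces $s(\alpha_1) = r(\gamma) = s(\gamma) = s(\beta)$, whence $\alpha_1 = \beta$ by injectivity of $s|_U$; the product is therefore a genuine conjugate $\alpha_1 \gamma \alpha_1^{-1}$, which lies in $\Iso(G)$. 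As $U\, \Iso(G)^{\circ}\, U^{-1}$ is then an open subset of $\Iso(G)$, it is contained in $\Iso(G)^{\circ}$, and in particular $\alpha \gamma \alpha^{-1} \in \Iso(G)^{\circ}$ for every admissible $\gamma$.

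I expect this last bisection argument—recovering the conjugating element from the injectivity of $s|_U$—to be the main obstacle, since without it one only obtains that products of the a priori ambiguous form $\alpha_1 \gamma \beta^{-1}$ land in $\Iso(G)$, whereas normality requires them to collapse to honest conjugates. The remaining verifications are routine applications of the openness of set products (Proposition \ref{multi}) and the openness of $G^{(0)}$ (Proposition \ref{local homeo}).
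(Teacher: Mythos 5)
Your proposal is correct and follows essentially the same route as the paper: both establish $G^{(0)}\subset \Iso(G)^{\circ}$ from the openness of $G^{(0)}$ (Proposition \ref{local homeo}), and both prove conjugation invariance by enclosing the conjugate in an open set of the form $UVU^{-1}$ (Proposition \ref{multi}) and using injectivity of $s|_U$ to see that every composable product $\alpha_1\gamma\beta^{-1}$ collapses to a genuine conjugate lying in $\Iso(G)$, hence in $\Iso(G)^{\circ}$. Your extra verifications that $\Iso(G)^{\circ}$ is closed under inversion and multiplication are correct details that the paper leaves implicit, but they do not change the nature of the argument.
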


\begin{proof}
	We show that $\Iso(G)^{\circ}$ is normal.
	By Proposition \ref{local homeo}, $G^{(0)}$ is contained in $\Iso(G)^{\circ}$.
	Take $\alpha\in G$ and $\gamma\in \Iso(G)^{\circ}$ with $s(\alpha)=r(\gamma)$.
	There exist open bisections $U,V\subset G$ with $\alpha\in U$ and $\gamma\in V\subset\Iso(G)$.
	Then, by Proposition \ref{multi},
	$UVU^{-1}\subset G$ is an open subset which contains $\alpha\gamma\alpha^{-1}$.  
	Since $U$ is bisection and $V\subset \Iso(G)$,
	we have $UVU^{-1}\subset \Iso(G)$.
	Therefore,
	$\alpha\gamma\alpha^{-1}\in\Iso(G)^{\circ}$ and $\Iso(G)^{\circ}$ is an open normal subgroupoid.
	\qed
\end{proof}

An \'etale groupoid $G/ \Iso(G)^{\circ}$,
which is a special case of quotient groupoids,
coincides with a groupoid of germs of the canonical action (see \cite[Section 3]{renault}).
One can see that $G/\Iso(G)^{\circ}$ is effective.

\subsection{*-homomorphisms induced by quotients of \'etale groupoids}

For an \'etale groupoid $G$ and an open normal subgroupoid $H\subset G$,
we have obtained the quotient \'etale groupoid $G/H$.
Next, we see that the quotient map $q\colon G\to G/H$ induces a *-homomorphism $C^*(G)\to C^*(G/H)$.

For $f\in \mathcal{C}(G)$,
we define $\tilde{f}\colon G/H\to \C$ by
\[
\tilde{f}(\gamma)\defeq \sum_{q(\alpha)=\gamma}f(\alpha)
\]
for $\gamma\in G/H$.
Then,
the following proposition holds.
\begin{prop}\label{quotient induces *-hom}
	Let $G$ be an \'etale groupoid and $H\subset G$ be an open normal subgroupoid.
	Then,
	$\mathcal{C}(G)\ni f \mapsto \tilde{f}\in \mathcal{C}(G/H)$ is a surjective *-homomorphism.
\end{prop}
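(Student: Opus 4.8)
The plan is to reduce everything to the case where $f$ is supported on a single open bisection, using Proposition \ref{supported on bisection}, and then to exploit the fact that the quotient map $q$ restricts to a homeomorphism on each open bisection. First I would settle well-definedness together with the claim that $\tilde f$ lands in $\mathcal{C}(G/H)$. By Proposition \ref{supported on bisection} it suffices to treat $f\in C_c(U)$ for an open bisection $U\subset G$. The key point, already contained in the proof of Lemma \ref{general quotient is local homeo}, is that $q|_U$ is injective (if $q(\alpha)=q(\alpha')$ with $\alpha,\alpha'\in U$ then $s(\alpha)=s(\alpha')$, and injectivity of $s|_U$ forces $\alpha=\alpha'$) and maps $U$ homeomorphically onto the open bisection $q(U)\subset G/H$. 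Hence for each $\gamma\in G/H$ there is at most one $\alpha\in U$ with $q(\alpha)=\gamma$, so the sum defining $\tilde f(\gamma)$ has at most one nonzero term and $\tilde f=f\circ (q|_U)^{-1}$ on $q(U)$, extended by $0$ elsewhere. Thus $\tilde f\in C_c(q(U))\subset\mathcal{C}(G/H)$, and extending linearly yields a well-defined linear map $\mathcal{C}(G)\to\mathcal{C}(G/H)$.

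Next I would verify that the $*$-algebra structure is preserved. For the involution, using that $q$ is a groupoid homomorphism with $q(\gamma^{-1})=q(\gamma)^{-1}$, the map $\alpha\mapsto\alpha^{-1}$ is a bijection between $q^{-1}(\gamma)$ and $q^{-1}(\gamma^{-1})$, which gives $\widetilde{f^{*}}(\gamma)=\sum_{q(\alpha)=\gamma}\overline{f(\alpha^{-1})}=\overline{\sum_{q(\beta)=\gamma^{-1}}f(\beta)}=(\tilde f)^{*}(\gamma)$. For multiplicativity I would expand both sides and check they are the same finite sum: on the one hand $\widetilde{f*g}(\delta)=\sum_{q(\gamma)=\delta}\sum_{\alpha\beta=\gamma}f(\alpha)g(\beta)$, and on the other hand, expanding the convolution on $G/H$, $(\tilde f*\tilde g)(\delta)=\sum_{\delta_1\delta_2=\delta}\,\sum_{q(\alpha)=\delta_1}\sum_{q(\beta)=\delta_2}f(\alpha)g(\beta)$. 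Since $q|_{G^{(0)}}$ is injective, composability of $\delta_1,\delta_2$ with $q(\alpha)=\delta_1$, $q(\beta)=\delta_2$ is equivalent to $s(\alpha)=r(\beta)$, and $\delta_1\delta_2=q(\alpha\beta)$; so both expressions are exactly $\sum f(\alpha)g(\beta)$ over all composable pairs $(\alpha,\beta)$ with $q(\alpha\beta)=\delta$, hence equal.

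Finally I would prove surjectivity. By Proposition \ref{supported on bisection} applied to $G/H$, it is enough to hit every $g\in C_c(W)$ for $W$ an open bisection of $G/H$. Since $q$ is a surjective local homeomorphism (Lemma \ref{general quotient is local homeo}), each point of $W$ has a preimage lying in an open bisection of $G$, so I can cover the compact set $\supp(g)$ by finitely many sets $q(U_i)$ with $U_i\subset G$ open bisections, take a partition of unity to write $g=\sum_i g_i$ with $g_i\in C_c(q(U_i))$, and set $f_i\defeq g_i\circ q|_{U_i}\in C_c(U_i)$. Then $\widetilde{f_i}=g_i$, and $f\defeq\sum_i f_i\in\mathcal{C}(G)$ satisfies $\tilde f=g$.

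The main obstacle I expect is not a single deep step but the bookkeeping needed to guarantee that every sum occurring above is finite, so that the reindexing in the multiplicativity computation is legitimate. This is precisely where the reduction to bisections does the work: for $f\in C_c(U)$ and $g\in C_c(V)$ the convolution $f*g$ is supported on the open bisection $UV$ by Proposition \ref{multi}, and for each fixed $\delta$ there is at most one preimage $\gamma\in UV$ of $\delta$ and at most one factorization of it inside $U\cdot V$, so each relevant sum collapses to at most one term. Once the injectivity of $q$ on bisections and on units is used systematically, the identity $\widetilde{f*g}=\tilde f*\tilde g$ becomes a formal matching of index sets.
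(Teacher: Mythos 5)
Your proof is correct and follows essentially the same route as the paper's: reduce to $f\in C_c(U)$ for an open bisection $U$, identify $\tilde f$ with $f\circ (q|_U)^{-1}\in C_c(q(U))$, verify multiplicativity by the same reindexing of finite sums, and obtain surjectivity by pulling functions on $q(U)$ back through $q|_U$. You additionally spell out details the paper leaves implicit (the involution, finiteness of the sums, and the use of injectivity of $q|_{G^{(0)}}$ to match composability), which is fine but not a different argument.
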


\begin{proof}
	First,
	we show $\tilde{f}\in \mathcal{C}(G/H)$.
	We may assume that there exists an open bisection $U\subset G$ such that $f|_U\in C_c(U)$ and $f|_{G\setminus U}=0$.
	Then,
	$q(U)\subset G/H$ is an open bisection and $\tilde{f}|_{q(U)}=f\circ (q|_U)^{-1}\in C_c(q(U))$,
	since $q|_U$ is a homeomorphism onto the image.
	Moreover,
	one can see that $\tilde{f}_{(G/H)\setminus q(U)}=0$.
	Hence, $\tilde{f}\in C_c(q(U))\subset \mathcal{C}(G/H)$.
	
	We show that $\mathcal{C}(G)\ni f\mapsto \tilde{f}\in \mathcal{C}(G/H)$ is a *-homomorphism. 
	We only check that $\mathcal{C}(G)\ni f\mapsto \tilde{f}\in \mathcal{C}(G/H)$ preserves the multiplications,
	since it is easy to check that this map is linear and preserves the involutions.
	For all $f,g\in \mathcal{C}(G)$ and $\gamma'\in G/H$, we have 
	\begin{align*}	\widetilde{f*g}(\gamma')&=\sum_{q(\gamma)=\gamma'}f*g(\gamma)=\sum_{q(\gamma)=\gamma'}\sum_{\alpha\beta=\gamma}f(\alpha)g(\beta)=\sum_{q(\alpha\beta)=\gamma'}f(\alpha)g(\beta), \\
	\tilde{f}*\tilde{g}(\gamma')&=\sum_{\alpha'\beta'=\gamma'}\tilde{f}(\alpha')\tilde{g}(\beta')=\sum_{\alpha'\beta'=\gamma'}\sum_{q(\alpha)=\alpha'}\sum_{q(\beta)=\beta'}f(\alpha)g(\beta) \\
	&=\sum_{q(\alpha\beta)=\gamma'}f(\alpha)g(\beta).
	\end{align*}
	
	Finally, we show that $\mathcal{C}(G)\ni f\mapsto \tilde{f}\in \mathcal{C}(G/H)$ is surjective.
	Note that $\{q(U)\subset G/H\mid \text{$U\subset G$ is an open bisection}\}$ is an open basis of $G$.
	Let $U\subset G$ be an open bisection and $f\in C_c(q(U))$.
	One can see that $q|_U$ is a homeomorphism onto its image.
	Define $g\defeq f\circ q|_U\in C_c(U)$. 
	Then, we have $\widetilde{g}=f$.	
	By Proposition \ref{canonical surjection},
	$\mathcal{C}(G/H)$ is the linear span of $\bigcup_U C_c(q(U))$,
	where the union is taken over all open bisections $U\subset G$.
	Hence,
	$\mathcal{C}(G)\ni f\mapsto \tilde{f}\in \mathcal{C}(G/H)$ is a surjective *-homomorphism.
	\qed
\end{proof}

By the universality of $C^*(G)$,
the surjective *-homomorphism $\mathcal{C}(G)\ni f \mapsto \tilde{f}\in \mathcal{C}(G/H)$ induces the surjective *-homomorphism $Q\colon C^*(G)\to C^*(G/H)$.

\begin{prop}\label{Q is inj on masa}
	Let $Q\colon C^*(G)\to C^*(G/H)$ be the *-homomorphism as above.
	Then, $\ker Q\cap C_0(G^{(0)})=\{0\}$ holds. 
\end{prop}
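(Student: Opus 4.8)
The plan is to show that $Q$ restricted to the commutative subalgebra $C_0(G^{(0)})$ is nothing but the isomorphism onto $C_0((G/H)^{(0)})$ induced by the homeomorphism $q|_{G^{(0)}}\colon G^{(0)}\to (G/H)^{(0)}$, and therefore is injective.

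First I would compute $\tilde{h}$ for $h\in C_c(G^{(0)})$. Since $h$ vanishes off $G^{(0)}$, the defining sum $\tilde{h}(\gamma)=\sum_{q(\alpha)=\gamma}h(\alpha)$ only receives contributions from $\alpha\in G^{(0)}$. For $\gamma\notin (G/H)^{(0)}$ there is no such $\alpha$, because $G^{(0)}\subset H=q^{-1}((G/H)^{(0)})$ by Theorem \ref{quotient groupoid main theorem}, so $\tilde{h}$ is supported on $(G/H)^{(0)}$. For $\gamma=q(x)$ with $x\in G^{(0)}$, the injectivity of $q|_{G^{(0)}}$ forces $\alpha=x$ to be the unique unit contributing to the sum, whence $\tilde{h}(q(x))=h(x)$. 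In other words, $\tilde{h}=h\circ (q|_{G^{(0)}})^{-1}\in C_c((G/H)^{(0)})$, so the restriction of the *-homomorphism $\mathcal{C}(G)\ni f\mapsto\tilde{f}$ to $C_c(G^{(0)})$ is exactly the pullback along the homeomorphism $q|_{G^{(0)}}$.

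Next I would upgrade this to the completions. Recall that the inclusions $C_c(G^{(0)})\subset\mathcal{C}(G)$ and $C_c((G/H)^{(0)})\subset\mathcal{C}(G/H)$ extend to $C_0(G^{(0)})\subset C^*(G)$ and $C_0((G/H)^{(0)})\subset C^*(G/H)$, with the respective universal norms agreeing with the supremum norms on the unit spaces. Hence the pullback along $q|_{G^{(0)}}$ is isometric on $C_c(G^{(0)})$, and by continuity of $Q$ its restriction $Q|_{C_0(G^{(0)})}$ coincides with the isometric isomorphism $C_0(G^{(0)})\to C_0((G/H)^{(0)})$ induced by the homeomorphism $q|_{G^{(0)}}$. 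An isomorphism is injective, so $\ker Q\cap C_0(G^{(0)})=\{0\}$.

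The only point requiring genuine care is the computation of $\tilde{h}$ on units: one must invoke both that $q|_{G^{(0)}}$ is injective and that $G^{(0)}\subset H$, so that units are never identified with non-units and the sum defining $\tilde{h}$ collapses to a single term. Everything else is a routine density-and-continuity argument.
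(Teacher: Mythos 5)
Your proposal is correct and follows essentially the same route as the paper: both arguments reduce to showing that $Q$ restricted to $C_c(G^{(0)})$ is isometric because the universal norm on functions supported in the unit space is the supremum norm, and then pass to $C_0(G^{(0)})$ by density. The only difference is that you make explicit the computation $\tilde{h}=h\circ (q|_{G^{(0)}})^{-1}$ (using $G^{(0)}\subset H$ and the injectivity of $q|_{G^{(0)}}$), which the paper leaves implicit.
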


\begin{proof}
	Since the universal norm of a function in $C_c(G^{(0)})$ coincides with the supremum norm,
	$Q|_{C_c(G^{(0)})}$ is isometric.
	Therefore,
	$Q|_{C_0(G^{(0)})}$ is isometric and $\ker Q\cap C_0(G^{(0)})=\{0\}$.
	\qed
\end{proof}

\begin{defi}
Let $G$ be an \'etale groupoid.
We say that $G$ has the full uniqueness property if every non-zero ideal $I\subset C^*(G)$ satisfy $I\cap C_0(G^{(0)})\not=\{0\}$.
\end{defi}

The full uniqueness property of $G$ is equivalent to the condition that a *-representation $\pi\colon C^*(G)\to B(H)$ is injective if and only if $\pi|_{C_0(G^{(0)})}$ is injective.  

\begin{lem}\label{QC*G to C*G/H}
	Let $G$ be an \'etale groupoid and $H\subset G$ be an open normal subgroupoid.
	Then,
	the *-homomorphism $Q\colon C^*(G)\to C^*(G/H)$ induced by Proposition \ref{quotient induces *-hom} is injective if and only if $H= G^{(0)}$.
\end{lem}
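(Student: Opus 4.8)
\emph{The plan is} to prove the two implications separately, the reverse being a quick observation and the forward direction (via contraposition) carrying all the weight. Suppose first that $H=G^{(0)}$. Then the relation $\sim$ collapses to equality: if $\alpha\sim\beta$, then $s(\alpha)=s(\beta)$ and $\alpha\beta^{-1}\in G^{(0)}$, so $\alpha\beta^{-1}=r(\alpha\beta^{-1})=r(\alpha)$, and multiplying this identity on the right by $\beta$ yields $\alpha=\beta$. Consequently $q\colon G\to G/H$ is a continuous open bijection, hence a homeomorphism and an isomorphism of \'etale groupoids, under which $\tilde{f}=f\circ q^{-1}$. Thus $f\mapsto\tilde{f}$ is an isomorphism $\mathcal{C}(G)\to\mathcal{C}(G/H)$, and its extension $Q$ is an isomorphism, in particular injective.

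For the converse I would argue by contraposition: assuming $G^{(0)}\subsetneq H$, I would exhibit a nonzero element of $\ker Q$. The key observation is that $q$ identifies every $\gamma\in H\setminus G^{(0)}$ with its source. Since $\gamma\in H\subset\Iso(G)$ we have $s(\gamma)=r(\gamma)$ and $\gamma\, s(\gamma)^{-1}=\gamma\, s(\gamma)=\gamma\in H$, so $\gamma\sim s(\gamma)$ and hence $q(\gamma)=q(s(\gamma))$, while $\gamma\neq s(\gamma)$ because $\gamma\notin G^{(0)}$. This forced collapse of two distinct points onto one is precisely what will produce the kernel.

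Concretely, I would choose an open bisection $U\subset H$ with $\gamma\in U$ and a function $f\in C_c(U)$ with $f(\gamma)\neq0$. As $s|_U$ is a homeomorphism onto the open set $s(U)\subset G^{(0)}$, I set $h\defeq f-f\circ(s|_U)^{-1}\in\mathcal{C}(G)$, the second summand regarded as an element of $C_c(s(U))$. For each point $\delta$ of the unit space, the $q$-fibers meeting $U$ and those meeting $s(U)$ correspond bijectively through $\alpha\mapsto s(\alpha)$, both satisfying $q(\alpha)=q(s(\alpha))$; a direct fiberwise computation of $\tilde{h}(\delta)=\sum_{q(\alpha)=\delta}h(\alpha)$ then cancels term by term, giving $\tilde{h}=0$, i.e.\ $h\in\ker Q$. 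On the other hand $h(\gamma)=f(\gamma)\neq0$ since $\gamma\notin s(U)\subset G^{(0)}$, so $h$ is a nonzero function; because the universal norm is a genuine C*-norm on $\mathcal{C}(G)$, $h$ is nonzero in $C^*(G)$. Hence $\ker Q\neq\{0\}$ and $Q$ is not injective.

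\emph{The main obstacle}—indeed the only nonroutine point—is the verification that $\tilde{h}=0$, which rests on matching the $q$-fibers through $U$ with those through $s(U)$ via the source map; once this is in place, the triviality of $\sim$ when $H=G^{(0)}$ and the nonvanishing $h(\gamma)\neq0$ are both immediate.
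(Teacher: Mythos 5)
Your proposal is correct and follows essentially the same route as the paper: for $G^{(0)}\subsetneq H$ the paper also picks an open bisection $U\subset H$ through a point $\gamma_0\in H\setminus G^{(0)}$, a function $f_1\in C_c(U)$ with $f_1(\gamma_0)=1$, and shows that $f_1-f_1\circ(s|_U)^{-1}$ is a nonzero element of $\ker Q$. Your fiberwise cancellation argument is precisely the verification of the step the paper leaves as ``one can see that $Q(f)=0$,'' and your treatment of the easy direction matches the paper's (which simply calls it clear).
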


\begin{proof}
	It is clear that the *-homomorphism $Q\colon C^*(G)\to C^*(G/H)$ is injective if $H= G^{(0)}$.
	Suppose that $G^{(0)}\subsetneq H$ and take $\gamma_0\in H\setminus G^{(0)}$.
	There exists an open bisection $U\subset G$ with $\gamma_0\in U\subset H$.
	By the Urysohn lemma,
	there exists $f_1\in C_c(U)$ with $f_1(\gamma_0)=1$.
	Define $f_2\in C_c(G^{(0)})$ by 
	\begin{align*}
	f_2(\gamma)=
	\begin{cases}
	f_1\circ (s|_U)^{-1}(\gamma) & (\gamma\in s(U))\\
	0 & ( \gamma\in G^{(0)}\setminus s(U)).
	\end{cases}
	\end{align*}
	We have $f\defeq f_1-f_2\not=0$, since $f(\gamma_0)=1$.
	One can see that $Q(f)=0$,
	which means that $Q$ is not injective.\qed	
\end{proof}

Recall that an \'etale groupoid $G$ is said to be effective if $G^{(0)}=\Iso(G)^{\circ}$.
\begin{cor}[cf.\ {\cite[Proposition 5.5]{Brown2014}}]\label{intersection prpoerty implies effective}\label{uniqueness property implies effectiveness}
Let $G$ be an \'etale groupoid.
If $G$ has the full uniqueness property,
then $G$ is effective.
\end{cor}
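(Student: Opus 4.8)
The plan is to take $H\defeq\Iso(G)^{\circ}$ and run the quotient machinery for this particular normal subgroupoid. By Proposition \ref{Iso circ is normal sub}, $\Iso(G)^{\circ}$ is an open normal subgroupoid, so all the results of this subsection apply. The crucial observation that drives everything is that $G$ is effective precisely when $G^{(0)}=\Iso(G)^{\circ}$, i.e.\ precisely when $H=G^{(0)}$. Thus effectiveness is exactly the hypothesis of one direction of Lemma \ref{QC*G to C*G/H}, and the strategy is to deduce $H=G^{(0)}$ from the full uniqueness property by showing that the induced map $Q$ is injective.

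First I would form the surjective *-homomorphism $Q\colon C^*(G)\to C^*(G/H)$ coming from Proposition \ref{quotient induces *-hom} via the universality of $C^*(G)$. The kernel $\ker Q\subset C^*(G)$ is an ideal. The key input is Proposition \ref{Q is inj on masa}, which tells us that $\ker Q\cap C_0(G^{(0)})=\{0\}$, regardless of what $H$ is. Now I would invoke the full uniqueness property in its contrapositive form: since every non-zero ideal $I$ satisfies $I\cap C_0(G^{(0)})\neq\{0\}$, an ideal meeting $C_0(G^{(0)})$ trivially must be zero. Applying this to $I=\ker Q$ gives $\ker Q=\{0\}$, so $Q$ is injective.

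Finally, Lemma \ref{QC*G to C*G/H} says that $Q$ is injective if and only if $H=G^{(0)}$. Combining this with the injectivity just obtained yields $\Iso(G)^{\circ}=H=G^{(0)}$, which is exactly the statement that $G$ is effective, completing the argument.

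I do not expect a genuine obstacle here, since all the technical work has been front-loaded into Propositions \ref{Iso circ is normal sub} and \ref{Q is inj on masa} and Lemma \ref{QC*G to C*G/H}. The only conceptual step is recognizing the chain of equivalences \emph{$G$ effective} $\iff$ \emph{$\Iso(G)^{\circ}=G^{(0)}$} $\iff$ \emph{$Q$ injective}, and then using the full uniqueness property to force $\ker Q=\{0\}$ through Proposition \ref{Q is inj on masa}. The potential subtlety to be careful about is simply ensuring that $\ker Q$ is treated as a genuine (closed, two-sided) ideal so that the full uniqueness property applies verbatim; this is automatic since $\ker Q$ is the kernel of a *-homomorphism between C*-algebras.
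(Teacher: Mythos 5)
Your proposal is correct and is essentially identical to the paper's own proof: both take $H=\Iso(G)^{\circ}$ (normal by Proposition \ref{Iso circ is normal sub}), form $Q\colon C^*(G)\to C^*(G/\Iso(G)^{\circ})$ via Proposition \ref{quotient induces *-hom}, use Proposition \ref{Q is inj on masa} together with the full uniqueness property to conclude $\ker Q=\{0\}$, and then apply Lemma \ref{QC*G to C*G/H} to get $\Iso(G)^{\circ}=G^{(0)}$. No gaps; the only difference is that you spell out the contrapositive use of the full uniqueness property slightly more explicitly than the paper does.
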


\begin{proof}
By Proposition \ref{Iso circ is normal sub},
$\Iso(G)^{\circ}$ is a normal subgroupoid of $G$.
Letting $Q\colon C^*(G)\to C^*(G/\Iso(G)^{\circ})$ be the *-homomorphism induced by Proposition \ref{quotient induces *-hom},
we have $\ker Q\cap C_0(G^{(0)})=\{0\}$.
The full uniqueness property implies that $Q\colon C^*(G)\to C^*(G/\Iso(G)^{\circ})$ is injective.
Therefore,
we obtain  $\Iso(G)^{\circ}=G^{(0)}$ by Lemma \ref{QC*G to C*G/H}.\qed
\end{proof}

\begin{rem}
	It had been known that Corollary \ref{intersection prpoerty implies effective} holds for Hausdorff \'etale groupoids as proved in \cite[Proposition 5.5]{Brown2014}. 
	In \cite[Proposition 5.5]{Brown2014},
	the authors use the augmentation representations,
	which seems to work for non-Hausdorff \'etale groupoids. 
	
	As shown in Proposition \ref{quotient induces *-hom},
	the quotient map $G\to G/\Iso(G)^{\circ}$ of \'etale groupoids induces the *-homomorphism $C^*(G)\to C^*(G/\Iso(G)^{\circ})$.  
	Using this *-homomorphism,
	we obtain the proof of Corollary \ref{intersection prpoerty implies effective}, which seems to be more direct than the one in \cite[Proposition 5.5]{Brown2014}.
	We shall remark that $G/\Iso(G)^{\circ}$ coincides with the groupoid of germs of the canonical action in \cite[Proposition 3.2]{renault}.

	The converse of Corollary \ref{intersection prpoerty implies effective} does not hold for non-Hausdorff \'etale groupoids.
	Indeed,
	Exel showed that there exists an effective \'etale groupoid $G$ which does not have the full uniqueness property in \cite{ExelnonHausdorff} (cf.\ Example \ref{Exelgroupoid}).
\end{rem}

\begin{ex}[{\cite[Section 2]{ExelnonHausdorff}}]\label{Exelgroupoid}
	Let $X\defeq ([-1,1]\times \{0\})\cup (\{0\}\times [-1,1])\subset\R^2$ and $K\defeq\{e,s,t,st\}$ be the Klein group,
	which is isomorphic to $\Z/2\Z\oplus\Z/2\Z$.
	We define an action $\sigma$ of $K$ on $X$ by
	\begin{align*}
	\sigma_s((x,y))=(-x,y), \ \  \sigma_t((x,y))=(x,-y), \ \  \sigma_{st}((x,y))=(-x,-y)
	\end{align*}
	fot $(x,y)\in X$.
	
	\begin{figure}
		\centering
		\begin{tikzpicture}[auto]
		\node(01) at (0,1){};
		\node (-10) at (-1,0){};
		\node(0) at (0,0){};
		\node(10) at (1,0){};
		\node(0-1) at (0,-1){};
		\fill[black] (01) circle (1pt);
		\fill[black] (10) circle (1pt);
		\fill[black] (-10) circle (1pt); \fill[black] (0-1) circle (1pt);
		\draw[-] (0,-1) to node {} (0,1);
		\draw[-] (-1,0) to node {} (1,0);
		\node at (0,1.3) {$\scriptstyle (0,1)$};
		\node at (1,-0.2) {$\scriptstyle (1,0)$};
		\node at (-1,-0.2) {$\scriptstyle (-1,0)$};
		\node at (0,-1.2) {$\scriptstyle (0,-1)$};
		\end{tikzpicture}
		\caption{Picture of $X$ in Example \ref{Exelgroupoid}}
	\end{figure}
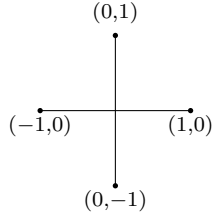
	
	Consider the transformation groupoid $G\defeq K\ltimes_{\sigma}X$ (see Example \ref{action}).
	One can see that 
	\begin{align*}
	\Iso(G)=G^{(0)} &\cup \{(s,(0,y))\in G\mid y\in [-1,1]\}  \\
	&\cup \{(t,(x,0))\in G\mid x\in [-1,1] \}\cup \{(st,(0,0))\}.
	\end{align*}
	Moreover, we have $\Iso(G)^{\circ}= \Iso(G)\setminus \{(s,(0,0)),(t,(0,0)),(st,(0,0))\}$.
	Since $\Iso(G)^{\circ}$ is not closed in $G$ (for example, $(s,(0,0))\in \overline{\Iso(G)^{\circ}}\setminus \Iso(G)^{\circ}$),
	the quotient \'etale groupoid $G/\Iso(G)^{\circ}$ is not Hausdorff by Proposition \ref{Hausdorffness of G/H}.
	In \cite{ExelnonHausdorff},
	Exel shows that $G/\Iso(G)^{\circ}$ does not have the full uniqueness property,
	although it is effective. 
\end{ex}

Based on the work in \cite{LisaRuyAidan}, we shall give a necessary and sufficient condition of the full uniqueness property.
Let $G$ be an \'etale groupoid.
We denote the left representation at $x\in G^{(0)}$ by $\lambda_x\colon \mathcal{C}(G)\to B(\ell^2(G_x))$.
By the universality of $C^*(G)$,
the left representation extends to the *-representation $\lambda_x\colon C^*(G)\to B(\ell^2(G_x))$.
Following \cite{LisaRuyAidan}, we say that an element $a\in C^*(G)$ is singular if the interior of $\{\gamma\in G\mid \i<\delta_{\gamma}|\lambda_{s(\gamma)}(a)\delta_{s(\gamma)}>\not=0\}$ is empty,
where $\delta_{\gamma}\in \ell^2(G_x)$ denotes the delta function at $\gamma\in G_x$.

We denote the reduced groupoid C*-algebra of $G$ by $C^*_{\lambda}(G)$.
We denote the canonical surjective *-homomorphism by $Q\colon C^*(G)\to C^*_{\lambda}(G)$.
In \cite{LisaRuyAidan}, the notion of a singular element is defined for elements in $C^*_{\lambda}(G)$ in the same way as elements in $C^*(G)$.
Since an element in $\ker Q$ is singular,
$C^*(G)$ has no nonzero singular elements if and only if $C^*_{\lambda}(G)$ has no nonzero singular elements and $C^*(G)\simeq C^*_{\lambda}(G)$ via the canonical *-homomorphism $Q$.

\begin{prop}
Let $G$ be a second countable \'etale groupoid.
Then,
$G$ has the full uniqueness property if and only if
\begin{itemize}
	\item $G$ is effective and
	\item $C^*(G)$ has no nonzero singular elements.
\end{itemize}
\end{prop}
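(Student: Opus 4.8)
The plan is to reduce everything to the reduced C*-algebra and the theory of singular elements from \cite{LisaRuyAidan}, using the bridge recorded in the remark preceding the statement: $C^*(G)$ has no nonzero singular elements precisely when $Q\colon C^*(G)\to C^*_{\lambda}(G)$ is an isomorphism \emph{and} $C^*_{\lambda}(G)$ itself has no nonzero singular elements. Throughout I would use the elementary computation that for $a\in C_0(G^{(0)})$ one has $\lambda_x(a)\delta_x=a(x)\delta_x$, so that the Fourier coefficient $\i<\delta_{\gamma}|\lambda_{s(\gamma)}(a)\delta_{s(\gamma)}>$ equals $a(s(\gamma))$ when $\gamma\in G^{(0)}$ and vanishes otherwise.

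For the forward direction, suppose $G$ has the full uniqueness property. Effectiveness is immediate from Corollary \ref{uniqueness property implies effectiveness}. For the statement about singular elements, I would first invoke from \cite{LisaRuyAidan} that the singular elements of $C^*_{\lambda}(G)$ form a closed two-sided ideal $J$; since an element and its image under $Q$ have the same Fourier coefficients (because $\lambda_x$ factors through $Q$), the set $S$ of singular elements of $C^*(G)$ equals $Q^{-1}(J)$ and is therefore a closed two-sided ideal of $C^*(G)$. The computation above shows that a singular element lying in $C_0(G^{(0)})$ has open support of empty interior, hence vanishes; thus $S\cap C_0(G^{(0)})=\{0\}$. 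The full uniqueness property then forces $S=\{0\}$, that is, $C^*(G)$ has no nonzero singular elements.

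For the reverse direction, assume $G$ is effective and $C^*(G)$ has no nonzero singular elements. By the remark, $Q$ is an isomorphism $C^*(G)\simeq C^*_{\lambda}(G)$ and $C^*_{\lambda}(G)$ has no nonzero singular elements, so its singular ideal is trivial and $C^*_{\lambda}(G)$ coincides with the essential C*-algebra $C^*_{\mathrm{ess}}(G)$. The point is now to establish the intersection property for $C^*_{\mathrm{ess}}(G)$: I would use second countability to pass from effectiveness ($G^{(0)}=\Iso(G)^{\circ}$) to topological principality of $G$, and then invoke the essential uniqueness theorem of \cite{LisaRuyAidan}, asserting that for such $G$ every nonzero ideal of $C^*_{\mathrm{ess}}(G)$ meets $C_0(G^{(0)})$ nontrivially. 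Transporting this along $C^*(G)\simeq C^*_{\lambda}(G)=C^*_{\mathrm{ess}}(G)$ yields the full uniqueness property of $G$.

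The main obstacle is this reverse direction, and specifically the essential uniqueness theorem for the essential C*-algebra of an effective non-Hausdorff groupoid. In the Hausdorff case this is the standard argument through the faithful conditional expectation $C^*_{\lambda}(G)\to C_0(G^{(0)})$ combined with topological principality; in the non-Hausdorff case the restriction-to-units map is no longer a faithful expectation, and the singular ideal is exactly the obstruction. Quotienting it out to form $C^*_{\mathrm{ess}}(G)$ restores a faithful essential expectation, which is what makes the argument run, and this is the content I would quote. The hypothesis of second countability is essential here, since effectiveness implies topological principality only under a Baire-category assumption on $G^{(0)}$.
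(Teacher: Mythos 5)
Your reverse direction is essentially the paper's own argument: you note that absence of nonzero singular elements forces $Q\colon C^*(G)\to C^*_{\lambda}(G)$ to be an isomorphism (since every element of $\ker Q$ is singular), pass from effectiveness to topological principality by a Baire-category argument using second countability, and then quote the uniqueness theorem of \cite{LisaRuyAidan} (their Theorem 4.4) for the reduced algebra. Your ``essential C*-algebra'' packaging adds nothing beyond this: when the singular elements vanish, $C^*_{\mathrm{ess}}(G)$ is just $C^*_{\lambda}(G)$, and the theorem you need is exactly the one the paper cites.

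The forward direction is where you genuinely diverge, and where there is a gap. The paper proves ``no nonzero singular elements'' by first getting effectiveness from Corollary \ref{uniqueness property implies effectiveness}, then using second countability and \cite[Proposition 3.6]{renault} to see that the set $S$ of units with trivial isotropy is dense, so that $\pi=\bigoplus_{x\in S}\lambda_x$ is injective on $C_0(G^{(0)})$ and hence, by full uniqueness, injective; \cite[Lemma 4.2]{LisaRuyAidan} then shows $\pi$ annihilates every singular element. You instead assert that the singular elements of $C^*_{\lambda}(G)$ form a \emph{closed two-sided ideal} $J$, pull it back to $S=Q^{-1}(J)$, observe $S\cap C_0(G^{(0)})=\{0\}$ (that computation is correct), and let full uniqueness kill $S$. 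The logic is fine, but the key input you ``invoke from \cite{LisaRuyAidan}'' is not proved there: that paper establishes pointwise statements about singular elements and a uniqueness theorem, and its arguments (like the paper's here) are structured precisely so as to avoid needing the ideal property. Nor is the property obvious: the condition that $\{\gamma\in G\mid \langle\delta_{\gamma}|\lambda_{s(\gamma)}(a)\delta_{s(\gamma)}\rangle\not=0\}$ has empty interior is not visibly stable under sums, because a union of two sets with empty interior can have nonempty interior. To run your argument one must upgrade ``empty interior'' to ``meager'' --- using that $j(a)$ is a uniform limit of functions in $\mathcal{C}(G)$, each of which is continuous off a closed nowhere dense set --- and then verify stability under sums, products and norm limits; this is essentially the later result of Kwa\'sniewski--Meyer on the singular ideal. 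If you supply that argument (or the correct citation), your forward direction becomes an attractive streamlining: unlike the paper's, it uses neither effectiveness nor second countability for this implication, so the two conclusions of the proposition are obtained independently of one another.
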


\begin{proof}
	Assume that $G$ has the full uniqueness property.
	Corollary \ref{intersection prpoerty implies effective} implies that $G$ is effective.
	We show that $C^*(G)$ has no nonzero singular elements.
	Let $a\in C^*(G)$ be a singular element.
	We define $S\defeq \{x\in G^{(0)}\mid G_x\cap G^x=\{x\}\}$,
	which is a dense subset of $G^{(0)}$ by \cite[Proposition 3.6]{renault}.
	Therefore, letting $\pi\defeq \bigoplus_{x\in S}\lambda_x$,
	$\pi$ is injective on $C_0(G^{(0)})$.
	The full uniqueness property implies that $\pi$ is injective.
	Since $\lambda_x(a)=\lambda_x(Q(a))$ holds for all $x\in G^{(0)}$,
	we have 
	\[
	s(\{ \gamma\in G\mid \i<\delta_{\gamma}|\lambda_{s(\gamma)}(a)\delta_{s(\gamma)}>\not=0 \})\subset G^{(0)}\setminus S
	\]
	by \cite[Lemma 4.2]{LisaRuyAidan}.
	Using this fact, one can see that $\pi(a)=0$ if $a\in C^*(G)$ is singular.
	Hence, $C^*(G)$ has no nonzero singular element. 
	
	Next, we show the converse.
	Note that $Q$ is an isomorphism by the assumpution that $C^*(G)$ has no nonzero singular elements.
	Now, the full uniqueness property of $G$ follows from \cite[Theorem 4.4]{LisaRuyAidan}.
	\qed
\end{proof}

\section{The abelianizations of \'etale groupoid C*-algebras}
In this section,
we calculate the abelianizations of \'etale groupoid C*-algebras.
First, recall the abelianizations of C*-algebras.
For a C*-algebra $A$,
its abelianization is defined by $A^{\ab}=A/I$,
where $I\subset A$ is the closed two-sided ideal generated by $\{xy-yx\in A\mid x,y\in A\}$. 
The abelianization $A^{\ab}$ is a commutative C*-algebra with the following universality;
for all commutative C*-algebra $B$ and *-homomorphism $\pi\colon A\to B$,
there exists the unique *-homomorphism $\tilde{\pi}\colon A^{\ab}\to B$ such that $\tilde{\pi}\circ q=\pi$,
where $q\colon A\to A^{\ab}$ denotes the quotient map.

\subsection{One dimensional representations of a groupoid C*-algebra}

For a C*-algebra $A$,
we denote the set of all one dimensional nondegenerate representations of $A$ by $\Delta(A)$.
Namely,
$\Delta(A)$ is the set of all nonzero *-homomorphisms from $A$ to $\C$.
We suppose that $\Delta(A)$ is equipped with the pointwise convergence topology.
If $A$ is commutative,
$\Delta(A)$ is known as the Gelfand spectrum of $A$.
First, we calculate $\Delta(C^*(G))$.

Let $G$ be an \'etale groupoid and $x\in G^{(0)}$ be a fixed point of $G$.
Note that $G_x$ is a discrete group.
We denote the surjection in Proposition \ref{canonical surjection} by $Q_x\colon C^*(G)\to C^*(G_x)$ temporarily.
Also, we denote the circle group by $\T\defeq\{z\in\C\mid \lvert z\rvert=1\}$. 
For a group homomorphism $\chi\colon G_x\to\T$,
a map $C_c(G_x)\ni f\mapsto \sum_{\gamma\in G_x}\chi(\gamma)f(\gamma)\in \C$ is a *-homomorphism.
This *-homomorphism extends to the *-homomorphism $C^*(G_x)\to \C$,
which we also denote by $\chi\colon C^*(G_x)\to \C$.   

\begin{defi}\label{def of phix,chi}
	Let $G$ be an \'etale groupoid, $x\in G^{(0)}$ be a fixed point and $\chi\colon G_x\to \T$ be a group homomorphism.
	Then,
	we define a *-homomorphism $\varphi_{x,\chi}\colon C^*(G)\to \C$ by $\varphi_{x,\chi}\defeq \chi\circ Q_x$.
\end{defi}
We will show that all elements of $\Delta(C^*(G))$ are obtained by this form (Theorem \ref{charaC*G}).

\begin{prop}\label{def of fixed pt}
	Let $G$ be an \'etale groupoid and $\varphi\in\chara(C^*(G))$.
	Then,
	there uniquely exists $x_{\varphi}\in G^{(0)}$ which satisfies $\varphi(f)=f(x_{\varphi})$ for all $f\in C_0(G^{(0)})$.
\end{prop}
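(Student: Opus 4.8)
The plan is to restrict $\varphi$ to the commutative C*-subalgebra $C_0(G^{(0)})\subset C^*(G)$ and read off $x_\varphi$ from Gelfand theory. Since $\varphi\colon C^*(G)\to\C$ is a (necessarily continuous) *-homomorphism, the restriction $\varphi|_{C_0(G^{(0)})}$ is a *-homomorphism from the commutative C*-algebra $C_0(G^{(0)})$ into $\C$. If this restriction is nonzero, it is a character of $C_0(G^{(0)})$; by the Gelfand--Naimark identification of the spectrum of $C_0(X)$ with $X$ for the locally compact Hausdorff space $X=G^{(0)}$, such a character is the evaluation $f\mapsto f(x_\varphi)$ at a point $x_\varphi\in G^{(0)}$, which is exactly the existence statement. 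So the whole proposition reduces to showing that $\varphi|_{C_0(G^{(0)})}\neq 0$.

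For the nonvanishing, I would first use $\varphi\neq 0$ together with the density of $\mathcal{C}(G)$ in $C^*(G)$ and the continuity of $\varphi$ to produce some $g\in\mathcal{C}(G)$ with $\varphi(g)\neq 0$; by Proposition \ref{supported on bisection} we may in fact take $g=f\in C_c(U)$ for a single open bisection $U$. The key point is a short local-unit computation: for any $h\in C_c(G^{(0)})$, since $h$ is supported on units, the convolution formula gives $(h*f)(\gamma)=h(r(\gamma))f(\gamma)$ for all $\gamma$. Choosing, via Urysohn's lemma on the locally compact Hausdorff space $G^{(0)}$, a function $h\in C_c(G^{(0)})$ that equals $1$ on the compact set $r(\supp f)$, we obtain $h*f=f$. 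Applying the *-homomorphism $\varphi$ then yields $\varphi(h)\varphi(f)=\varphi(f)\neq 0$, so $\varphi(h)\neq 0$ and in particular $\varphi|_{C_0(G^{(0)})}\neq 0$, as needed.

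Finally, uniqueness of $x_\varphi$ is immediate: because $G^{(0)}$ is Hausdorff and locally compact, $C_0(G^{(0)})$ separates the points of $G^{(0)}$, so two distinct points cannot induce the same evaluation functional. I expect the only genuine obstacle to be the nonvanishing step, and within it the local-unit identity $(h*f)(\gamma)=h(r(\gamma))f(\gamma)$; everything else is the standard Gelfand correspondence together with point separation on the Hausdorff unit space.
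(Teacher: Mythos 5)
Your proof is correct, and its overall skeleton matches the paper's: restrict $\varphi$ to $C_0(G^{(0)})$, show the restriction is nonzero, and then invoke the Gelfand--Naimark identification of $\chara(C_0(G^{(0)}))$ with $G^{(0)}$ (uniqueness coming from the fact that $C_0$ of a locally compact Hausdorff space separates points). The difference lies entirely in how the nonvanishing of $\varphi|_{C_0(G^{(0)})}$ is established. The paper disposes of it in one line: $C_0(G^{(0)})$ contains an approximate identity $(e_\lambda)$ for all of $C^*(G)$, so if $\varphi$ killed $C_0(G^{(0)})$ then $\varphi(a)=\lim_\lambda\varphi(a)\varphi(e_\lambda)=0$ for every $a$, contradicting $\varphi\neq 0$. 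You instead unpack this into a self-contained local-unit computation: pick $f\in C_c(U)$ ($U$ an open bisection) with $\varphi(f)\neq 0$ -- which exists by linearity of $\varphi$ together with Proposition \ref{supported on bisection} and density of $\mathcal{C}(G)$ -- verify the identity $(h*f)(\gamma)=h(r(\gamma))f(\gamma)$ for $h$ supported on units, and choose $h$ equal to $1$ on the compact set $r(\supp f)$ so that $h*f=f$, forcing $\varphi(h)\neq 0$ (indeed $\varphi(h)=1$). Your identity and the choice of $h$ are both correct, and your argument is in effect a proof of the special case of the approximate-identity fact that is actually needed; what the paper's citation buys is brevity, while your version buys independence from that structural fact, at the cost of the short convolution computation. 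Either route is sound.
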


\begin{proof}
	We have $\varphi|_{C_0(G^{0})}\not=0$ since $C_0(G^{(0)})$ has an approximate identity of $C^*(G)$.
	Therefore, $\varphi|_{C_0(G^{(0)})}$ belongs to $\Delta(C_0(G^{(0)}))$.
	Now, the existence and uniqueness of $x_{\varphi}\in G^{(0)}$ follow from the Gelfand-Naimark theorem. 
	\qed 
\end{proof}

\begin{prop}
	Let $G$ be an \'etale groupoid and $\varphi\in\chara(C^*(G))$.
	Then,
	$x_{\varphi}\in G^{(0)}$ is a fixed point.
\end{prop}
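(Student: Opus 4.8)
The plan is to argue by contradiction, exploiting the fact that $\varphi$ takes values in the \emph{commutative} algebra $\C$. Suppose $x_{\varphi}$ is not a fixed point, so that $\{x_{\varphi}\}$ fails to be invariant: there is some $\gamma\in G$ with $s(\gamma)=x_{\varphi}$ but $r(\gamma)\neq x_{\varphi}$. The idea is to produce a single $f\in\mathcal{C}(G)$ for which the two self-adjoint products $f^{*}*f$ and $f*f^{*}$ both lie in $C_c(G^{(0)})$ yet take \emph{different} values at $x_{\varphi}$, while commutativity of $\C$ forces $\varphi(f^{*}*f)=\varphi(f*f^{*})$; feeding this through Proposition \ref{def of fixed pt} yields the contradiction.

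Concretely, I first choose an open bisection $U$ with $\gamma\in U$. Since $r(\gamma)\neq x_{\varphi}$ and $G^{(0)}$ is Hausdorff, the continuity of $r$ lets me shrink $U$ (intersecting with the $r$-preimage of a neighbourhood of $r(\gamma)$ disjoint from $x_{\varphi}$) so that $x_{\varphi}\notin r(U)$; note $x_{\varphi}=s(\gamma)\in s(U)$ holds automatically. Applying the Urysohn lemma on the locally compact Hausdorff space $U$, I pick $f\in C_c(U)$ with $f(\gamma)=1$. The key computation is the standard bisection identity: for $f$ supported on the bisection $U$, one has $f^{*}*f\in C_c(s(U))$ with $(f^{*}*f)(s(\alpha))=\lvert f(\alpha)\rvert^{2}$ and $f*f^{*}\in C_c(r(U))$ with $(f*f^{*})(r(\alpha))=\lvert f(\alpha)\rvert^{2}$ for $\alpha\in U$; these follow directly from the convolution formula since, $U$ being a bisection, the relevant sums contain at most one nonzero term. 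In particular $(f^{*}*f)(x_{\varphi})=\lvert f(\gamma)\rvert^{2}=1$, whereas $(f*f^{*})(x_{\varphi})=0$ because $x_{\varphi}\notin r(U)$.

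To finish, I evaluate $\varphi$ on both products in two ways. Since $\varphi$ is a $*$-homomorphism into $\C$, we have $\varphi(f^{*}*f)=\overline{\varphi(f)}\,\varphi(f)$ and $\varphi(f*f^{*})=\varphi(f)\,\overline{\varphi(f)}$, and these coincide by commutativity of $\C$, both equalling $\lvert\varphi(f)\rvert^{2}$. On the other hand $f^{*}*f$ and $f*f^{*}$ lie in $C_c(G^{(0)})\subset C_0(G^{(0)})$, so Proposition \ref{def of fixed pt} evaluates $\varphi$ on them at $x_{\varphi}$, giving $\varphi(f^{*}*f)=1$ and $\varphi(f*f^{*})=0$. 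Hence $1=\lvert\varphi(f)\rvert^{2}=0$, a contradiction, so $x_{\varphi}$ must be a fixed point.

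The main obstacle is organizational rather than deep. One must verify the bisection product formulas $f^{*}*f,\,f*f^{*}\in C_c(G^{(0)})$ with the stated values (a short direct computation), and one must arrange $U$ so that $x_{\varphi}$ lies in $s(U)$ but not in $r(U)$ — this is precisely where the hypothesis $r(\gamma)\neq x_{\varphi}$ enters. The conceptual heart is simply that commutativity of $\C$ renders $\varphi(f^{*}*f)$ and $\varphi(f*f^{*})$ equal, which is incompatible with $x_{\varphi}$ emitting an arrow whose range differs from $x_{\varphi}$.
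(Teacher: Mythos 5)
Your proof is correct, and it differs in execution from the paper's, although both hinge on the same two facts: $\varphi$ is multiplicative into the commutative algebra $\C$, and $\varphi$ restricted to $C_c(G^{(0)})$ is evaluation at $x_{\varphi}$ (Proposition \ref{def of fixed pt}). The paper argues directly rather than by contradiction: fixing $n_{\gamma}\in C_c(U)$ with $n_{\gamma}(\gamma)=1$, it computes the conjugate $n_{\gamma}^**f*n_{\gamma}\in C_c(G^{(0)})$ of an \emph{arbitrary} $f\in C_c(G^{(0)})$ in two ways --- pointwise, its value at $x_{\varphi}$ is $f(r(\gamma))$, while multiplicativity plus commutativity of $\C$ gives $\varphi(n_{\gamma}^**f*n_{\gamma})=\varphi(n_{\gamma}^**n_{\gamma})\varphi(f)=f(x_{\varphi})$ --- and then concludes $r(\gamma)=x_{\varphi}$ because $C_c(G^{(0)})$ separates points of the locally compact Hausdorff space $G^{(0)}$. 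You instead use a single test function and the tracial identity $\varphi(f^**f)=\lvert\varphi(f)\rvert^2=\varphi(f*f^*)$, which produces the numerical contradiction $1=0$ once the bisection is shrunk so that $x_{\varphi}\in s(U)$ but $x_{\varphi}\notin r(U)$. The trade-off: you avoid quantifying over all of $C_c(G^{(0)})$ and the point-separation step, at the price of the preliminary topological shrinking (this is where Hausdorffness of $G^{(0)}$ enters your argument; in the paper it enters through point separation instead). The paper's conjugation computation is also slightly more informative in context: it is the germ of the subsequent constructions (Propositions \ref{f_gamma} and \ref{def of chi}), where $\varphi$ evaluated on bisection-supported functions is used to build the character $\chi_{\varphi}$, whereas your argument is the minimal one needed for this proposition alone. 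Your bisection identities $f^**f\in C_c(s(U))$ and $f*f^*\in C_c(r(U))$, with values $\lvert f(\alpha)\rvert^2$ at $s(\alpha)$ and $r(\alpha)$ respectively, are correct and verified by exactly the one-nonzero-term convolution computation you indicate.
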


\begin{proof}
	Assume that $\gamma\in G$ satisfies $s(\gamma)=x_{\varphi}$.
	We will show $r(\gamma)=x_{\varphi}$.
	There exists an open bisection $U\subset G$ with $\gamma\in U$.
	Take $n_{\gamma}\in C_c(U)$ which satisfies $n_{\gamma}(\gamma)=1$.
	Note that we have $n_{\gamma}^**n_{\gamma}\in C_c(G^{(0)})$ and $n_{\gamma}^**n_{\gamma}(x_{\varphi})=\lvert n_{\gamma}(\gamma)\rvert^2=1$.
	Fix $f\in C_c(G^{(0)})$ arbitrarily.

	Direct calculations show that $n^*_{\gamma}*f*n_{\gamma}(x_{\varphi})=\overline{n_{\gamma}(\gamma)}f(r(\gamma))n_{\gamma}(\gamma)=f(r(\gamma))$. 
	On the other hand,
	one can see that $n_{\gamma}^**f*n_{\gamma}\in C_c(G^{(0)})$.
	Then, we have
	\[
	n^*_{\gamma}*f*n_{\gamma}(x_{\varphi})=\varphi(n^*_{\gamma}*f*n_{\gamma})=\varphi(n^*_{\gamma})\varphi(f)\varphi(n_{\gamma})=\varphi(n^*_{\gamma}*n_{\gamma})\varphi(f)=f(x_{\varphi}).
	\]
	
	Therefore,
	$f(r(\gamma))=f(x_{\varphi})$ holds for all $f\in C_c(G^{(0)})$,
	which implies $r(\gamma)=x_{\varphi}$.
	Hence, $x_{\varphi}\in G^{(0)}$ is a fixed point of $G$.\qed
\end{proof}

\begin{prop}\label{f_gamma}
	Let $G$ be an \'etale groupoid, $\varphi\in\chara(C^*(G))$ and $\gamma\in G_{x_{\varphi}}$.
	Take an open bisection $U_{\gamma}\subset G$ with $\gamma\in U_{\gamma}$ and $f_{\gamma}\in C_c(U_{\gamma})$ with $f_{\gamma}(\gamma)=1$.
	Then,
	$\varphi(f_{\gamma})$ is independent of the choice of $U_{\gamma}$ and $f_{\gamma}$. 
	Moreover,
	we have $\varphi(f_{\gamma})\in\T$.
\end{prop}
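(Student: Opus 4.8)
The plan is to exploit the multiplicativity of $\varphi$ together with the fact, from Proposition \ref{def of fixed pt}, that $\varphi$ restricts to evaluation at $x_{\varphi}$ on $C_0(G^{(0)})$. Throughout, recall that $x_{\varphi}$ is a fixed point, so every $\gamma\in G_{x_{\varphi}}$ also satisfies $r(\gamma)=x_{\varphi}$. The key computational input is the following: if $U\subset G$ is an open bisection and $f,f'\in C_c(U)$, then $U^{-1}U\subset G^{(0)}$, because $r|_U$ is injective, so $r(\alpha)=r(\beta)$ forces $\alpha=\beta$ and hence $\alpha^{-1}\beta=s(\alpha)$; consequently $f^**f'\in C_c(G^{(0)})$. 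Evaluating at $x_{\varphi}$, the only surviving term of $\sum_{\beta\in G_{x_{\varphi}}}\overline{f(\beta)}f'(\beta)$ is $\beta=\gamma$, since $s|_U$ is injective and $s(\gamma)=x_{\varphi}$; thus $f^**f'(x_{\varphi})=\overline{f(\gamma)}f'(\gamma)$.

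With this in hand I would first prove $\varphi(f_{\gamma})\in\T$. Taking $f'=f=f_{\gamma}$ gives $f_{\gamma}^**f_{\gamma}\in C_c(G^{(0)})$ with $f_{\gamma}^**f_{\gamma}(x_{\varphi})=\lvert f_{\gamma}(\gamma)\rvert^2=1$, so Proposition \ref{def of fixed pt} yields $\varphi(f_{\gamma}^**f_{\gamma})=1$, while multiplicativity gives $\varphi(f_{\gamma}^**f_{\gamma})=\overline{\varphi(f_{\gamma})}\varphi(f_{\gamma})=\lvert\varphi(f_{\gamma})\rvert^2$; hence $\lvert\varphi(f_{\gamma})\rvert=1$. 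I would then settle independence when the bisection is held fixed: for $f,f'\in C_c(U)$ both equal to $1$ at $\gamma$ on a single open bisection $U$, the same computation gives $\overline{\varphi(f)}\varphi(f')=\varphi(f^**f')=f^**f'(x_{\varphi})=1$, and combining with $\lvert\varphi(f)\rvert=\lvert\varphi(f')\rvert=1$ forces $\varphi(f')=\varphi(f)$.

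Finally I would remove the restriction to a common bisection. Given $f_{\gamma}\in C_c(U_{\gamma})$ and $g_{\gamma}\in C_c(V_{\gamma})$ as in the statement, set $W\defeq U_{\gamma}\cap V_{\gamma}$, an open bisection containing $\gamma$, and use the Urysohn lemma to choose $h\in C_c(W)$ with $h(\gamma)=1$. Extending by zero, $h$ lies in both $C_c(U_{\gamma})$ and $C_c(V_{\gamma})$, so the fixed-bisection case gives $\varphi(f_{\gamma})=\varphi(h)=\varphi(g_{\gamma})$, which is the desired independence.

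The main obstacle is precisely this last step: for functions supported on different bisections the cross term $f_{\gamma}^**g_{\gamma}$ need not be supported in $G^{(0)}$, since the injectivity argument placing $U^{-1}U$ inside $G^{(0)}$ fails for $U_{\gamma}^{-1}V_{\gamma}$, and so $\varphi$ cannot be evaluated on it directly via Proposition \ref{def of fixed pt}. Passing to the common refinement $W=U_{\gamma}\cap V_{\gamma}$ and reducing to the fixed-bisection computation is what circumvents this. The only mild technical point to verify is that extension by zero indeed maps $C_c(W)$ into $C_c(U_{\gamma})$ and $C_c(V_{\gamma})$, which holds because $W$ is open in each bisection and each open bisection is locally compact Hausdorff.
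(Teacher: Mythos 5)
Your proof is correct. The argument that $\varphi(f_{\gamma})\in\T$ coincides with the paper's, and both proofs of independence rest on the same key idea of localizing to the common open bisection $W\defeq U_{\gamma}\cap V_{\gamma}$; what differs is the mechanism of the reduction. The paper modifies the two given functions: it takes a Urysohn function $h\in C_c(s(W))\subset C_c(G^{(0)})$ with $h(s(\gamma))=1$ and convolves, so that $\widetilde{f_{\gamma}}\defeq f_{\gamma}*h$ and $\widetilde{g_{\gamma}}\defeq g_{\gamma}*h$ both land in $C_c(W)$ (here convolution by $h$ is just pointwise multiplication by $h\circ s$, acting as a cutoff), and then evaluates a single cross term, $\overline{\varphi(f_{\gamma})}\varphi(g_{\gamma})=\varphi(\widetilde{f_{\gamma}}^{*}*\widetilde{g_{\gamma}})=\widetilde{f_{\gamma}}^{*}*\widetilde{g_{\gamma}}(x_{\varphi})=1$, concluding via $\varphi(f_{\gamma})\in\T$. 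You instead prove a fixed-bisection lemma (two functions on one common bisection, both equal to $1$ at $\gamma$, have the same $\varphi$-value) and then interpolate through a third function: a Urysohn function $h\in C_c(W)$ on the groupoid itself, extended by zero into both $C_c(U_{\gamma})$ and $C_c(V_{\gamma})$, gives $\varphi(f_{\gamma})=\varphi(h)=\varphi(g_{\gamma})$ by two applications of the lemma. What your route buys is that $\varphi$ is never applied to a product of functions supported on different bisections, so no convolution bookkeeping is needed; the only extra point is the extension-by-zero fact, which you correctly flag and which holds because $W$ is open in the locally compact Hausdorff space $U_{\gamma}$. What the paper's route buys is brevity: one computation, with the unit-space cutoff doing the localization. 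Both are complete proofs of the statement.
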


\begin{proof}
	First, we show $\varphi(f_{\gamma})\in\T$.
	Since $f_{\gamma}^**f_{\gamma}\in C_0(G^{(0)})$,
	we have
	\[
	\lvert\varphi(f_{\gamma})\rvert^2=\varphi(f_{\gamma}^**f_{\gamma})=f_{\gamma}^**f_{\gamma}(x_{\varphi})=\lvert f_{\gamma}(\gamma)\rvert^2=1.
	\]
	Therefore,
	$\varphi(f_{\gamma})\in \T$.
	
	Second,
	we show that $\varphi(f_{\gamma})$ is independent of the choice of $U_{\gamma}$ and $f_{\gamma}$.
	Assume that $f_{\gamma}\in C_c(U_{\gamma})$ and $g_{\gamma}\in C_c(V_{\gamma})$  satisfies $f_{\gamma}(\gamma)=g_{\gamma}(\gamma)=1$,
	where $U_{\gamma}$ and $V_{\gamma}\subset G$ are open bisections.
	Find a function $h\in C_c(s(U_{\gamma}\cap V_{\gamma}))\subset C_c(G^{(0)})$ such that $h(s(\gamma))=1$.
	Recall that $s(\gamma)=r(\gamma)=x_{\varphi}$ since $x_{\varphi}$ is a fixed point.
	Also, note that $\varphi(h)=h(x_{\varphi})=1$.
	Putting $\widetilde{f_{\gamma}}\defeq f_{\gamma}*h$ and $\widetilde{g_{\gamma}}=g_{\gamma}*h$,
	we have that $\widetilde{f_{\gamma}}$ and $\widetilde{g_{\gamma}}$ are contained in $C_c(U_{\gamma}\cap V_{\gamma})$.
	Then,
	it follows that $\widetilde{f_{\gamma}}^**\widetilde{g_{\gamma}}\in C_0(G^{(0)})$ and
	\begin{align*}
	\overline{\varphi(f_{\gamma})}\varphi(g_{\gamma})&=\overline{\varphi(h)\varphi(f_{\gamma})}\varphi(g_{\gamma})\varphi(h)=\varphi(\widetilde{f_{\gamma}}^**\widetilde{g_{\gamma}}) \\
	&=\widetilde{f_{\gamma}}^**\widetilde{g_{\gamma}}(x_{\varphi})=\overline{h(r(\gamma))f_{\gamma}(\gamma)}g_{\gamma}(\gamma)h(s(\gamma))=1.
	\end{align*}
	Now,
	we have $\varphi(f_{\gamma})=\varphi(g_{\gamma})$ since $\varphi(f_{\gamma})\in \T$.\qed
	
\end{proof}

\begin{prop}\label{def of chi}
	Let $G$ be an \'etale groupoid and $\varphi\in\chara(C^*(G))$.
	We define $\chi_{\varphi}\colon G_{x_{\varphi}}\to \T$ by $\chi_{\varphi}(\gamma)\defeq \varphi(f_{\gamma})$,
	where $\gamma\in G_{x_{\varphi}}$ and $f_{\gamma}\in \mathcal{C}(G)$ is a function as in Proposition \ref{f_gamma}.
	Then,
	$\chi_{\varphi}\colon G_{x_{\varphi}}\to \T$ is a group homomorphism. 
\end{prop}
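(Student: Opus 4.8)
The plan is to verify directly that $\chi_\varphi$ respects the group multiplication of $G_{x_\varphi}$; the property $\chi_\varphi(\gamma)\in\T$ and well-definedness are already supplied by Proposition \ref{f_gamma}, so the only thing left is the homomorphism identity $\chi_\varphi(\gamma\delta)=\chi_\varphi(\gamma)\chi_\varphi(\delta)$ for $\gamma,\delta\in G_{x_\varphi}$. The natural strategy is to pick representing functions $f_\gamma,f_\delta$ on open bisections containing $\gamma,\delta$ and show that their convolution $f_\gamma * f_\delta$ is a legitimate representative of $\gamma\delta$, i.e.\ lies in $C_c(W)$ for some open bisection $W\ni\gamma\delta$ with $(f_\gamma*f_\delta)(\gamma\delta)=1$. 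Then applying the multiplicativity of $\varphi$ gives everything at once.

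\textbf{First} I would fix $\gamma,\delta\in G_{x_\varphi}$ and recall that since $x_\varphi$ is a fixed point we have $s(\gamma)=r(\gamma)=s(\delta)=r(\delta)=x_\varphi$, so in particular $(\gamma,\delta)\in G^{(2)}$ and $\gamma\delta\in G_{x_\varphi}$ as well. Choose open bisections $U_\gamma\ni\gamma$ and $U_\delta\ni\delta$ together with $f_\gamma\in C_c(U_\gamma)$, $f_\delta\in C_c(U_\delta)$ satisfying $f_\gamma(\gamma)=f_\delta(\delta)=1$, as in Proposition \ref{f_gamma}. By Proposition \ref{multi}, the product $W\defeq U_\gamma U_\delta$ is again an open bisection, and it contains $\gamma\delta$.

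\textbf{Next} I would compute $(f_\gamma*f_\delta)(\gamma\delta)$ using the convolution formula. Because $W$ is a bisection, for the unit $s(\gamma\delta)=x_\varphi$ there is at most one way to factor $\gamma\delta$ as a product of an element of $U_\gamma$ with an element of $U_\delta$ over the relevant fibre; the defining sum $\sum_{\beta\in G_{s(\gamma\delta)}} f_\gamma(\gamma\delta\beta^{-1})f_\delta(\beta)$ therefore collapses to the single term $\beta=\delta$, giving $(f_\gamma*f_\delta)(\gamma\delta)=f_\gamma(\gamma)f_\delta(\delta)=1$. This shows $f_\gamma*f_\delta$ is a valid choice of representative function $f_{\gamma\delta}\in C_c(W)$ in the sense of Proposition \ref{f_gamma}. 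The independence clause of that proposition then guarantees $\chi_\varphi(\gamma\delta)=\varphi(f_\gamma*f_\delta)$ regardless of this particular choice.

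\textbf{Finally}, since $\varphi$ is a $*$-homomorphism to $\C$, it is multiplicative, so
\[
\chi_\varphi(\gamma\delta)=\varphi(f_\gamma*f_\delta)=\varphi(f_\gamma)\varphi(f_\delta)=\chi_\varphi(\gamma)\chi_\varphi(\delta),
\]
which is exactly the homomorphism property. I would also note briefly that $\chi_\varphi(x_\varphi)=1$, since a constant-$1$ bump $f\in C_c(G^{(0)})$ near $x_\varphi$ serves as representative and $\varphi(f)=f(x_\varphi)=1$ by Proposition \ref{def of fixed pt}; combined with multiplicativity this also forces $\chi_\varphi(\gamma^{-1})=\overline{\chi_\varphi(\gamma)}$. \textbf{The main obstacle} I anticipate is the bookkeeping in the convolution step: one must argue carefully that no other factorization $\gamma\delta=\alpha\beta$ with $\alpha\in U_\gamma$, $\beta\in U_\delta$, $s(\alpha)=r(\beta)$ contributes, which is precisely where the bisection property of $U_\gamma$, $U_\delta$ (and hence $W$) is essential — this is the only genuinely non-formal point, everything else being an immediate consequence of multiplicativity of $\varphi$ and the results already established.
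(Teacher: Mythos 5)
Your proposal is correct and takes essentially the same route as the paper's own proof: represent $\gamma\delta$ by the convolution $f_\gamma * f_\delta$, note it lies in $C_c(W)$ for the open bisection $W=U_\gamma U_\delta$ (Proposition \ref{multi}) with value $1$ at $\gamma\delta$, and conclude by multiplicativity of $\varphi$ together with the independence clause of Proposition \ref{f_gamma}. The only difference is that you spell out the collapse of the convolution sum to the single term $\beta=\delta$, a verification the paper asserts without detail.
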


\begin{proof}
	Take $\alpha,\beta\in G_{x_{\varphi}}$.
	We show $\chi_{\varphi}(\alpha)\chi_{\varphi}(\beta)=\chi_{\varphi}(\alpha\beta)$.
	Take $f_{\alpha},f_{\beta}\in \mathcal{C}(G)$ as in Proposition \ref{f_gamma}.
	It follows that $f_{\alpha}*f_{\beta}\in C_c(U)$ for some open bisection $U\subset G$ and $f_{\alpha}*f_{\beta}(\alpha\beta)=1$.
	Hence,
	we have
	\[
	\chi_{\varphi}(\alpha\beta)=\varphi(f_{\alpha}*f_{\beta})=\varphi(f_{\alpha})\varphi(f_{\beta})=\chi_{\varphi}(\alpha)\chi_{\varphi}(\beta)
	\]
	by the definition of $\chi_{\varphi}$.\qed
\end{proof}

\begin{prop}\label{katahou1}  
	Let $G$ be an \'etale groupoid.
	Then, we have $\varphi=\varphi_{x_{\varphi},\chi_{\varphi}}$ for all $\varphi\in\chara(C^*(G))$.
\end{prop}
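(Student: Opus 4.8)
The plan is to check the equality $\varphi=\varphi_{x_\varphi,\chi_\varphi}$ on a spanning set of the dense $*$-subalgebra $\mathcal{C}(G)\subset C^*(G)$. Both $\varphi$ and $\varphi_{x_\varphi,\chi_\varphi}=\chi_\varphi\circ Q_{x_\varphi}$ are continuous $*$-homomorphisms $C^*(G)\to\C$, so it suffices to prove $\varphi(f)=\varphi_{x_\varphi,\chi_\varphi}(f)$ for every $f\in C_c(U)$ with $U\subset G$ an open bisection; the general case then follows by linearity and Proposition \ref{supported on bisection}.

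Fix such an $f\in C_c(U)$. I first unwind the right-hand side. Since $U$ is a bisection, the set $U\cap s^{-1}(x_\varphi)$ consists of at most one point. If $x_\varphi\in s(U)$, write $\gamma_0\defeq (s|_U)^{-1}(x_\varphi)$; because $x_\varphi$ is a fixed point we have $r(\gamma_0)=x_\varphi=s(\gamma_0)$, so $\gamma_0\in G_{x_\varphi}$. Recalling that $Q_{x_\varphi}(f)=f|_{G_{x_\varphi}}$ is supported on the single point $\gamma_0$ of the discrete group $G_{x_\varphi}$, the definition of the character $\chi_\varphi$ on $C^*(G_{x_\varphi})$ gives
\[
\varphi_{x_\varphi,\chi_\varphi}(f)=\chi_\varphi\big(f|_{G_{x_\varphi}}\big)=f(\gamma_0)\,\chi_\varphi(\gamma_0).
\]
If instead $x_\varphi\notin s(U)$, then $f|_{G_{x_\varphi}}=0$ and $\varphi_{x_\varphi,\chi_\varphi}(f)=0$.

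It remains to match $\varphi(f)$ with these values, and here is the key step. Suppose first $x_\varphi\in s(U)$. Choose, as in Proposition \ref{f_gamma}, a function $f_{\gamma_0}\in C_c(U)$ with $f_{\gamma_0}(\gamma_0)=1$, so that $\varphi(f_{\gamma_0})=\chi_\varphi(\gamma_0)$ by the definition of $\chi_\varphi$. Set $k\defeq f-f(\gamma_0)f_{\gamma_0}\in C_c(U)$; by construction $k(\gamma_0)=0$. Because $U$ is a bisection, $k^**k$ lies in $C_c(G^{(0)})$ and satisfies $(k^**k)(x_\varphi)=\lvert k(\gamma_0)\rvert^2=0$. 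Since $\varphi$ acts on $C_0(G^{(0)})$ by evaluation at $x_\varphi$ (Proposition \ref{def of fixed pt}), we obtain
\[
\lvert\varphi(k)\rvert^2=\varphi(k^**k)=(k^**k)(x_\varphi)=0,
\]
whence $\varphi(k)=0$ and $\varphi(f)=f(\gamma_0)\varphi(f_{\gamma_0})=f(\gamma_0)\chi_\varphi(\gamma_0)$, matching the right-hand side. In the remaining case $x_\varphi\notin s(U)$, the same computation applied directly to $f$ gives $(f^**f)(x_\varphi)=0$, hence $\varphi(f)=0$; both sides again agree.

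I expect the only real content to be the localization identity $(k^**k)(x_\varphi)=\lvert k(\gamma_0)\rvert^2$ for $k$ supported on a bisection, which is exactly the device already exploited in Proposition \ref{f_gamma}; everything else is bookkeeping, namely verifying that on an open bisection both the restriction to $G_{x_\varphi}$ and the sum defining the character $\chi_\varphi$ collapse to the single point $\gamma_0$, and that subtracting $f(\gamma_0)f_{\gamma_0}$ kills the value at $\gamma_0$ without leaving the bisection.
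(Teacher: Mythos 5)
Your proof is correct and follows essentially the same route as the paper: reduce to $f\in C_c(U)$ for an open bisection $U$, exploit $\lvert\varphi(f)\rvert^2=\varphi(f^**f)=(f^**f)(x_\varphi)$ together with the fact that $U\cap G_{x_\varphi}$ is at most a singleton, and invoke Proposition \ref{f_gamma} to identify the remaining value with $\chi_\varphi$. The only (cosmetic) difference is that the paper rescales $f$ to $F=f/f(\gamma)$ in the non-vanishing case, whereas you subtract $f(\gamma_0)f_{\gamma_0}$ and kill the difference; both hinge on the same localization identity.
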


\begin{proof}
	Take $f\in C_c(U)$,
	where $U\subset G$ is an open bisection.
	It suffices to show $\varphi(f)=\varphi_{x_{\varphi},\chi_{\varphi}}(f)$,
	since $C^*(G)$ is generated by such functions.
	Note that $f^**f\in C_c(G^{(0)})$.
	If $G_{x_{\varphi}}\cap f^{-1}(\C\setminus\{0\})=\emptyset$,
	then we have $0=f^**f(x_{\varphi})=\lvert \varphi(f)\rvert^2$.
	Since the restriction of $f|_{G_{x_{\varphi}}}$ is zero,
	it follows that $\varphi_{x_{\varphi},\chi_{\varphi}}(f)=0=\varphi(f)$.
	If $G_{x_{\varphi}}\cap f^{-1}(\C\setminus\{0\})\not=\emptyset$,
	$G_{x_{\varphi}}\cap f^{-1}(\C\setminus\{0\})$ is a singleton because $f$ is supported on an open bisection.
	Let $\gamma\in G_{x_{\varphi}}\cap f^{-1}(\C\setminus\{0\})$ be the unique element of $G_{x_{\varphi}}\cap f^{-1}(\C\setminus\{0\})$.
	Observe that $F\defeq f/f(\gamma)\in C_c(U)$ satisfies $F(\gamma)=1$.
	Now,
	we have
	\[
	\varphi_{x_{\varphi},\chi_{\varphi}}(f)=f(\gamma)\chi_{\varphi}(\gamma)=f(\gamma)\varphi(F)=\varphi(f). 
	\]
	Hence,
	we have $\varphi_{x_{\varphi},\chi_{\varphi}}=\varphi$.\qed
\end{proof}

\begin{prop}\label{katahou2} 
	Let $G$ be an \'etale groupoid,
	$x\in G^{(0)}$ be a fixed point and $\chi\colon G_x\to \T$ be a group homomorphism.
	Then,
	$x=x_{\varphi_{x,\chi}}$ and $\chi=\chi_{\varphi_{x,\chi}}$.
\end{prop}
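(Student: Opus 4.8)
The plan is to compute $\varphi_{x,\chi}$ directly on functions supported on open bisections and then compare the result with the two recovery formulas for $x_{\varphi}$ and $\chi_{\varphi}$ obtained in Propositions \ref{def of fixed pt} and \ref{def of chi}. Write $\varphi \defeq \varphi_{x,\chi} = \chi\circ Q_x$ as in Definition \ref{def of phix,chi}, where $Q_x\colon C^*(G)\to C^*(G_x)$ is the restriction map $f\mapsto f|_{G_x}$ furnished by Proposition \ref{canonical surjection} applied to the closed invariant set $\{x\}$. The observation I would exploit throughout is that if $U\subset G$ is an open bisection then $U\cap G_x = U\cap s^{-1}(x)$ consists of at most one point, since $s|_U$ is injective. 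Hence for $f\in C_c(U)$ the restriction $f|_{G_x}\in C_c(G_x)$ is a scalar multiple of a single delta function, which $\chi$ evaluates through the defining sum $\chi(g)=\sum_{\eta\in G_x}\chi(\eta)g(\eta)$.

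First I would prove $x=x_{\varphi}$. Take $f\in C_c(G^{(0)})$. Because $G_x\cap G^{(0)}=\{x\}$, the restriction $f|_{G_x}$ is supported only at the unit $x$ of the group $G_x$, with value $f(x)$, so $f|_{G_x}=f(x)\delta_x$ in $C_c(G_x)$. Since $\chi$ is a group homomorphism we have $\chi(x)=1$, whence $\varphi(f)=\chi(f|_{G_x})=f(x)$. As this holds for every $f\in C_c(G^{(0)})$, the uniqueness statement of Proposition \ref{def of fixed pt} forces $x_{\varphi}=x$.

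Having identified the base points, I would next show $\chi=\chi_{\varphi}$ as homomorphisms on $G_x=G_{x_{\varphi}}$. Fix $\gamma\in G_x$ and choose an open bisection $U_{\gamma}$ with $\gamma\in U_{\gamma}$ together with $f_{\gamma}\in C_c(U_{\gamma})$ satisfying $f_{\gamma}(\gamma)=1$, as in Proposition \ref{f_gamma}. Since $U_{\gamma}$ is a bisection and $s(\gamma)=x$, the element $\gamma$ is the unique point of $U_{\gamma}\cap G_x$, so $f_{\gamma}|_{G_x}=\delta_{\gamma}$ in $C_c(G_x)$. Therefore, by the definition of $\chi_{\varphi}$ in Proposition \ref{def of chi}, $\chi_{\varphi}(\gamma)=\varphi(f_{\gamma})=\chi(f_{\gamma}|_{G_x})=\chi(\delta_{\gamma})=\chi(\gamma)$, which gives $\chi_{\varphi}=\chi$.

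The computation is essentially routine once the restriction maps are unwound; the only step requiring a little care is the identification of $f_{\gamma}|_{G_x}$ with $\delta_{\gamma}$, which rests entirely on the bisection property of $U_{\gamma}$ guaranteeing that $U_{\gamma}$ meets the isotropy group $G_x$ in exactly the single point $\gamma$. I expect no serious obstacle beyond keeping track of the identification of $G_x$ with the isotropy group at the fixed point $x$ and the fact that $Q_x$ is literally the restriction $f\mapsto f|_{G_x}$.
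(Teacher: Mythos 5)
Your proof is correct and follows essentially the same route as the paper's: evaluate $\varphi_{x,\chi}$ on $C_c(G^{(0)})$ to pin down $x_{\varphi}$ via Proposition \ref{def of fixed pt}, then on a function $f_{\gamma}\in C_c(U_{\gamma})$ with $f_{\gamma}(\gamma)=1$ to recover $\chi_{\varphi}(\gamma)=\chi(\gamma)$. The paper compresses both computations into the identities $\varphi_{x,\chi}(f)=f(x)\chi(x)$ and $\varphi_{x,\chi}(f_{\gamma})=f_{\gamma}(\gamma)\chi(\gamma)$, which are exactly what your unwinding of $Q_x$ and the identification $f_{\gamma}|_{G_x}=\delta_{\gamma}$ justify.
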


\begin{proof}
	First, we show $x=x_{\varphi_{x,\chi}}$.
	Take $f\in C_c(G^{(0)})$ arbitrarily.
	Then,
	we have
	\[
	f(x_{\varphi_{x,\chi}})=\varphi_{x,\chi}(f)=f(x)\chi(x)=f(x).
	\]
	Hence, it follows $x=x_{\varphi_x}$.
	
	Next,
	we show $\chi=\chi_{\varphi_{x,\chi}}$.
	Take $\gamma\in G_x$ arbitrarily.
	There exist an open bisection $U\subset G$ with $\gamma\in U$ and $f\in C_c(U)$ with $f(\gamma)=1$.
	Then,
	we have
	\[
	\chi_{\varphi_{x,\chi}}(\gamma)=\varphi_{x,\chi}(f)=f(\gamma)\chi(\gamma)=\chi(\gamma).
	\]
	Hence,
	we have shown $x=x_{\varphi_{x,\chi}}$ and $\chi=\chi_{\varphi_{x,\chi}}$.\qed
\end{proof}

Combining Proposition \ref{katahou1} and Proposition \ref{katahou2},
we obtain the next theorem.

\begin{thm}\label{charaC*G} 
	Let $G$ be an \'etale groupoid.
	Define a set
	\begin{align*}
	\mathcal{D}\defeq \{(x,\chi)\mid x\in G^{(0)} & \text{ is a fixed point} \\
		 & \text{and $\chi\colon G_x\to\T$ is a group homomorphism} \}.
	\end{align*}
	Then,
	a map 
	\[
	\mathcal{D} \ni (x,\chi) \longrightarrow \varphi_{x,\chi}\in \chara(C^*(G))
	\]
	is bijective.
\end{thm}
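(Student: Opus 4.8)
The plan is to prove bijectivity by exhibiting the two-sided inverse explicitly, namely the assignment $\varphi \mapsto (x_\varphi, \chi_\varphi)$ that extracts from a character $\varphi$ the data constructed in the preceding propositions. Almost all of the genuine content has already been established, so the theorem reduces to assembling Proposition \ref{katahou1} and Proposition \ref{katahou2} into a statement about mutually inverse maps. The first thing I would record is that this candidate inverse actually takes values in $\mathcal{D}$: for $\varphi \in \chara(C^*(G))$ the point $x_\varphi \in G^{(0)}$ exists and is unique by Proposition \ref{def of fixed pt}, it is a fixed point by the proposition immediately following it, and $\chi_\varphi \colon G_{x_\varphi} \to \T$ is a well-defined group homomorphism by Proposition \ref{f_gamma} (independence of the choices of $U_\gamma$ and $f_\gamma$, together with $\varphi(f_\gamma) \in \T$) and Proposition \ref{def of chi}. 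Hence $(x_\varphi, \chi_\varphi) \in \mathcal{D}$ for every $\varphi$, so $\varphi \mapsto (x_\varphi,\chi_\varphi)$ is a genuine map $\chara(C^*(G)) \to \mathcal{D}$.

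For surjectivity I would take an arbitrary $\varphi \in \chara(C^*(G))$, form the pair $(x_\varphi, \chi_\varphi) \in \mathcal{D}$ as above, and invoke Proposition \ref{katahou1}, which asserts precisely that $\varphi_{x_\varphi, \chi_\varphi} = \varphi$. Thus every character lies in the image of $(x,\chi) \mapsto \varphi_{x,\chi}$.

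For injectivity I would suppose $\varphi_{x,\chi} = \varphi_{x',\chi'}$ for two pairs $(x,\chi),(x',\chi') \in \mathcal{D}$. Proposition \ref{katahou2}, applied to each pair separately, gives $x = x_{\varphi_{x,\chi}}$, $\chi = \chi_{\varphi_{x,\chi}}$ and likewise $x' = x_{\varphi_{x',\chi'}}$, $\chi' = \chi_{\varphi_{x',\chi'}}$. Since $\varphi_{x,\chi} = \varphi_{x',\chi'}$, the extracted data coincide, so $x = x'$ and $\chi = \chi'$, i.e.\ $(x,\chi) = (x',\chi')$. Combining the two composites, $\varphi \mapsto (x_\varphi,\chi_\varphi)$ is a two-sided inverse of $(x,\chi) \mapsto \varphi_{x,\chi}$, which gives the desired bijection.

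I do not expect a real obstacle at this stage, since the substantive arguments — that $x_\varphi$ is forced to be a fixed point and that $\chi_\varphi$ is unambiguously defined — were settled earlier. The only point demanding care is the bookkeeping in the first step: verifying that the candidate inverse genuinely lands in $\mathcal{D}$, rather than merely producing a point of $G^{(0)}$ and a map on its isotropy group. Once that well-definedness is in place, Propositions \ref{katahou1} and \ref{katahou2} do exactly the work needed and the two halves of the bijection follow immediately.
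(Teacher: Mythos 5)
Your proposal is correct and is exactly the paper's argument: the paper obtains the theorem by combining Proposition \ref{katahou1} (surjectivity, via $\varphi = \varphi_{x_\varphi,\chi_\varphi}$) with Proposition \ref{katahou2} (injectivity, via recovering $(x,\chi)$ from $\varphi_{x,\chi}$), which is precisely the two-sided-inverse structure you describe. Your additional bookkeeping that $(x_\varphi,\chi_\varphi)$ genuinely lands in $\mathcal{D}$ is a sound (if implicit in the paper) elaboration rather than a deviation.
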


\subsection{Construction of an \'etale abelian group bundle $G^{\ab}$}
For an \'etale groupoid $G$,
we construct an \'etale abelian group bundle $G^{\ab}$ so that $C^*(G)^{\ab}\simeq C^*(G^{\ab})$ holds.

\begin{prop}
Let $G$ be an \'etale group bundle.
We define the commutator subgroupoid of $G$ by $[G,G]\defeq \bigcup_{x\in G^{(0)}}[G_x,G_x]$,
where $[G_x,G_x]$ is the commutator subgroup of $G_x$.
Then,
$[G,G]$ is an open normal subgroupoid of $G$.
\end{prop}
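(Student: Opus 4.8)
The plan is to dispose of the algebraic and normality conditions quickly and then concentrate on openness, which is the only real difficulty.

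First I would check that $[G,G]$ is a normal subgroupoid in the sense of the definition above. Since $G$ is a group bundle we have $s=r$ and each $G_x=G^x$ is a group, so the only composable products inside $[G,G]$ occur within a single fiber; as each $[G_x,G_x]$ is a subgroup of $G_x$, this gives closure under multiplication and inversion, and the unit of $G_x$ lies in $[G_x,G_x]$, whence $G^{(0)}\subseteq[G,G]\subseteq\Iso(G)=G$. For normality, given $\alpha\in G$ with $s(\alpha)=r(\eta)$ for some $\eta\in[G,G]$, the group-bundle structure forces $\alpha$ and $\eta$ into a common fiber $G_x$, and then $\alpha\eta\alpha^{-1}$ is an inner conjugate of $\eta$ in the group $G_x$; since $[G_x,G_x]$ is characteristic in $G_x$, we obtain $\alpha\eta\alpha^{-1}\in[G_x,G_x]\subseteq[G,G]$.

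The main work, and the main obstacle, is to show that $[G,G]$ is open. The key step I would isolate is the following: for any two open bisections $U,V\subseteq G$, the set $UVU^{-1}V^{-1}$—which is an open bisection by Proposition~\ref{multi}—is contained in $[G,G]$. To see this, take $\zeta\in UVU^{-1}V^{-1}$ and write $\zeta=a\,b\,c\,d$ with $a\in U$, $b\in V$, $c\in U^{-1}$, $d\in V^{-1}$ and all factors composable. Because $G$ is a group bundle, composability forces $a,b,c,d$ into one fiber $G_x$; because $U$ and $V$ are bisections, $G_x$ meets each of $U,V$ in at most one point, so $c$ must be the inverse of the unique element of $U\cap G_x$, namely $c=a^{-1}$, and likewise $d=b^{-1}$. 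Hence $\zeta=aba^{-1}b^{-1}\in[G_x,G_x]\subseteq[G,G]$. This is exactly the place where both hypotheses (bisection and group bundle) are used in an essential way, and it is the crux of the argument.

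Finally I would promote this to a neighborhood statement. Given $\eta\in[G,G]$, say $\eta\in[G_x,G_x]$, write $\eta=\prod_{i=1}^{n}\gamma_i\delta_i\gamma_i^{-1}\delta_i^{-1}$ as a product of commutators in $G_x$, choose open bisections $U_i\ni\gamma_i$ and $V_i\ni\delta_i$, and form the set $W\defeq\prod_{i=1}^{n}U_iV_iU_i^{-1}V_i^{-1}$, which is open by repeated use of Proposition~\ref{multi} and contains $\eta$. Any $\zeta\in W$ decomposes as a composable product $\zeta=\prod_{i=1}^{n}\zeta_i$ with $\zeta_i\in U_iV_iU_i^{-1}V_i^{-1}$; composability together with the group-bundle structure places all $\zeta_i$ in a common fiber $G_y$, and by the key step each $\zeta_i\in[G_y,G_y]$, so $\zeta\in[G_y,G_y]\subseteq[G,G]$. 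Thus $W\subseteq[G,G]$ is an open neighborhood of $\eta$, proving that $[G,G]$ is open. I expect the only subtlety to be bookkeeping the composability constraints in the $n$-fold product, which the bisection property keeps under control.
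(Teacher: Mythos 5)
Your proposal is correct and follows essentially the same route as the paper's proof: the crux in both is that for open bisections $U,V$ in a group bundle, any element of $UVU^{-1}V^{-1}$ is forced by composability and the bisection property to be a genuine commutator $\alpha\beta\alpha^{-1}\beta^{-1}$ in a single fiber, and this extends to the $k$-fold product of such sets, which is open by Proposition \ref{multi}. Your explicit verification of normality is fine as well (the paper dismisses it as obvious), so there is nothing to correct.
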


\begin{proof}
	It is obvious that $[G,G]\subset G$ is a normal subgroupoid.
	We show that $[G,G]\subset G$ is open.
	Take $\gamma\in [G,G]$.
	By the definition of the commutator subgroup,
	there exists $\{\alpha_j\}_{j=1}^k,\{\beta_j\}_{j=1}^k\subset G_{s(\gamma)}$ such that 
	\[
	\gamma=\alpha_1\beta_1 \alpha_1^{-1}\beta_1^{-1}\alpha_2\beta_2 \alpha_2^{-1}\beta_2^{-1}\cdots\alpha_k\beta_k \alpha_k^{-1}\beta_k^{-1}.\]
	Take open bisections $U_j,V_j\subset G$ such that $\alpha_j\in U_j$ and $\beta_j\in V_j$ for all $j=1,2,\dots,k$.
	We show that $U_1V_1U_1^{-1}V_1^{-1}\subset [G,G]$,
	where we define $U^{-1}\defeq\{\gamma^{-1}\mid \gamma\in U\}$ for $U\subset G$.
	Fix $\gamma'\in U_1V_1U_1^{-1}V_1^{-1}$.
	Then,
	there exist $\alpha,\alpha'\in U_1$ and $\beta,\beta'\in V_1$ which satisfy $\gamma=\alpha\beta\alpha'^{-1}\beta'^{-1}$.
	Since $G$ is a group bundle,
	we have $s(\alpha)=s(\alpha')=s(\beta)=s(\beta')$.
	We obtain $\alpha=\alpha'$ and $\beta=\beta'$ because $U_1$ and $V_1$ are bijections.
	Therefore,
	$\gamma'=\alpha\beta\alpha^{-1}\beta^{-1}\in [G,G]$.
	Similarly, one can show that $U_1V_1U_1^{-1}V_1^{-1}U_2V_2U_2^{-1}V_2^{-1}\cdots U_kV_kU_k^{-1}V_k^{-1}\subset [G,G]$.
	
	By Proposition \ref{multi}, $U_1V_1U_1^{-1}V_1^{-1}U_2V_2U_2^{-1}V_2^{-1}\cdots U_kV_kU_k^{-1}V_k^{-1}$ is an open set and this contains $\gamma$.
	Hence,
	$[G,G]\subset G$ is an open normal subgroupoid.
	\qed
\end{proof}

Let $G$ be an \'etale groupoid.
Recall that the set of all fixed points $F\subset G^{(0)}$ is a closed subset of $G^{(0)}$ (Proposition \ref{fixed points are closed}).
We define $G_{\fix}\defeq G_F$,
which is an \'etale groupoid from Proposition \ref{restriction to inv is etale}.
Since we have $G_{\fix}=\Iso(G_{\fix})$,
$G_{\fix}$ is an \'etale group bundle.

\begin{defi}
	Let $G$ be an \'etale groupoid.
	We define the abelianization of $G$ by $G^{\ab}\defeq G_{\fix}/[G_{\fix},G_{\fix}]$.
\end{defi}

Let $G$ be an \'etale groupoid.
Then, we have a *-homomorphism $C^*(G)\to C^*(G_{\fix})$ induced by the restriction (Proposition \ref{canonical surjection}).
Composing with the *-homomorphism $C^*(G_{\fix})\to C^*(G^{\ab})$ in Proposition \ref{quotient induces *-hom},
we obtain a *-homomorphism $\pi\colon C^*(G)\to C^*(G^{\ab})$.

Note that $C^*(G)$ is commutative if and only if $G$ is an \'etale abelian group bundle.
In particular, $C^*(G^{\ab})$ is commutative.

\begin{lem}\label{character of CGab}
	Let $G$ be an \'etale groupoid.
	Then,
	a map $\Phi\colon \chara(C^*(G^{\ab}))\ni \chi\mapsto \chi\circ \pi\in \chara(C^*(G))$ is bijective.
\end{lem}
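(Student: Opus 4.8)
The goal is to show that precomposition with the canonical surjection $\pi\colon C^*(G)\to C^*(G^{\ab})$ gives a bijection between the one-dimensional representations of $C^*(G^{\ab})$ and those of $C^*(G)$. Since $\pi$ is surjective, precomposition is automatically \emph{injective}: if $\chi_1\circ\pi=\chi_2\circ\pi$ then $\chi_1=\chi_2$ on the dense image, hence everywhere. So the whole content is \emph{surjectivity} of $\Phi$, i.e.\ every $\varphi\in\chara(C^*(G))$ factors through $\pi$. I will attack this using the concrete parametrization already established in Theorem \ref{charaC*G}.

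By Theorem \ref{charaC*G}, an arbitrary $\varphi\in\chara(C^*(G))$ equals $\varphi_{x,\chi}$ for a unique pair $(x,\chi)$ with $x\in G^{(0)}$ a fixed point and $\chi\colon G_x\to\T$ a group homomorphism. The plan is to identify $\chara(C^*(G^{\ab}))$ with the \emph{same} data and check the identification is compatible with $\Phi$. First I would note that $G^{\ab}=G_{\fix}/[G_{\fix},G_{\fix}]$ is an \'etale abelian group bundle, so by Theorem \ref{charaC*G} applied to $G^{\ab}$, its characters are parametrized by pairs $(x,\bar\chi)$ where $x\in (G^{\ab})^{(0)}=F$ (every unit of a group bundle is a fixed point) and $\bar\chi\colon (G^{\ab})_x=(G_x\cap G_{\fix})/[G_x,G_x]\to\T$ is a homomorphism. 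But a homomorphism from an abelianized group into the abelian group $\T$ is exactly the same as a homomorphism from $G_x$ into $\T$ that kills $[G_x,G_x]$ — and \emph{every} homomorphism $G_x\to\T$ kills the commutator subgroup automatically. Hence the data $(x,\chi)$ parametrizing $\chara(C^*(G))$ and the data parametrizing $\chara(C^*(G^{\ab}))$ are literally the same set.

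The remaining step is to verify that under these two parametrizations $\Phi$ is the identity map on the common index set $\mathcal D$; that is, if $\bar\varphi_{x,\bar\chi}\in\chara(C^*(G^{\ab}))$ corresponds to $(x,\bar\chi)$, then $\bar\varphi_{x,\bar\chi}\circ\pi=\varphi_{x,\chi}$, where $\chi$ is the homomorphism on $G_x$ induced by $\bar\chi$. I would check this by evaluating both sides on a generator $f\in C_c(U)$ for an open bisection $U\subset G$, using the explicit description of $\pi$ as the composite of restriction $C^*(G)\to C^*(G_{\fix})$ (Proposition \ref{canonical surjection}) followed by the quotient map $C^*(G_{\fix})\to C^*(G^{\ab})$ given by $g\mapsto\tilde g$ (Proposition \ref{quotient induces *-hom}). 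On such an $f$ both sides reduce, via the computations in the proof of Proposition \ref{katahou1}, to $f(\gamma)\chi(\gamma)$ when $G_x\cap\supp f$ is the singleton $\{\gamma\}$ and to $0$ otherwise; the summation defining $\tilde g$ collapses because $f$ is supported on a bisection. This matching is the crux of the argument, but it is a short direct verification rather than a real obstacle.

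The main point to be careful about — the only place where something could genuinely go wrong — is the claim that the character of $G^{\ab}$ recovers \emph{all} of $\varphi_{x,\chi}$ after precomposition, including the fact that $\varphi_{x,\chi}$ depends only on the restriction of $\chi$ to $G_x$ viewed through $G_{\fix}$. Concretely I must confirm that the restriction map $C^*(G)\to C^*(G_{\fix})$ does not lose the character: since $x$ is a fixed point it lies in $F$, so $G_x\subset G_{\fix}$ and $Q_x\colon C^*(G)\to C^*(G_x)$ factors through $C^*(G_{\fix})$, which is exactly what makes the factorization through $\pi$ possible. Once this compatibility of the three maps $Q_x$, restriction, and quotient is pinned down, bijectivity of $\Phi$ follows immediately from the bijectivity statement of Theorem \ref{charaC*G} applied to both $G$ and $G^{\ab}$.
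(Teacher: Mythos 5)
Your proof is correct and follows essentially the same route as the paper's: injectivity from surjectivity of $\pi$, and surjectivity by writing $\varphi=\varphi_{x,\chi}$ via Theorem \ref{charaC*G}, factoring $\chi$ through $G_x^{\ab}=(G^{\ab})_x$ (possible since $\T$ is abelian, so homomorphisms $G_x\to\T$ kill commutators), and then checking compatibility of the restriction and quotient maps under $\pi$. The paper packages that final compatibility check as a commutative diagram assembled from Propositions \ref{canonical surjection} and \ref{quotient induces *-hom} rather than as your computation on generators $f\in C_c(U)$, but the mathematical content is identical.
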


\begin{proof}
	Surjectivity of $\pi$ implies that $\Phi$ is injective.
	We show that $\Phi$ is surjective.
	Take $\varphi\in\chara(C^*(G))$.
	Then,
	we have the fixed point $x_{\varphi}\in G^{(0)}$ and the group homomorphism $\chi_{\varphi}$ which makes the following diagram commutative;
	\begin{center}
		\begin{tikzcd}
			& C^*(G) \arrow[r,"\varphi"] \arrow[d,swap,"q"] & \C \\
			& C^*(G_{x_{\varphi}}) \arrow[ur,"\chi_{\varphi}",swap],
		\end{tikzcd}
	\end{center}
	where $q\colon C^*(G)\to C^*(G_{x_{\varphi}})$ is the *-homomorphism obtained in Proposition \ref{canonical surjection}.
	
	By the universality of $G_{x_{\varphi}}^{\ab}\defeq (G_{x_{\varphi}})^{\ab} =(G^{\ab})_{x_{\varphi}}$,
	we obtain the group homomorphism $\bar{\chi}_{\varphi}\colon G_{x_{\varphi}}^{\ab}\to \T$ which makes the following diagram commutative;
	\begin{center}
		\begin{tikzcd}
			& C^*(G_{x_{\varphi}}) \arrow[r,"\chi_{\varphi}"] \arrow[d,swap,"q'"] &\C \\
			& C^*(G_{x_{\varphi}}^{\ab}) \arrow[ur,"\bar{\chi}_{\varphi}",swap],
		\end{tikzcd}
	\end{center}
	where $q'\colon C^*(G_{x_{\varphi}})\to C^*(G_{x_{\varphi}}^{\ab})$ denotes the *-homomorphism induced by the quotient map $G_{x_{\varphi}}\to G_{x_{\varphi}}^{\ab}$.
	
	Let $\mathrm{res}\colon C^*(G^{\ab})\to C^*(G^{\ab}_{x_{\varphi}})$ denote the *-homomorphism obtained by the restriction $\mathcal{C}(G^{\ab})\to \mathcal{C}(G^{\ab}_{x_{\varphi}})$ (see Proposition \ref{canonical surjection}).
	Now, we have the following commutative diagram;
	\begin{center}
		\begin{tikzpicture}[auto]
		\node (CG) at (0, 2) {$C^*(G)$}; \node (CGx) at (3, 2) {$C^*(G_{x_{\varphi}})$};
		\node (CGab) at (0, 0) {$C^*(G^{\ab})$};   \node (CGabx) at (3, 0) {$C^*(G^{\ab}_{x_{\varphi}})$.};
		\node (C) at (6, 2) {$\C$};
		\draw[->] (CG) to node {$\scriptstyle q$} (CGx);
		\draw[->, bend left=30pt] (CG) to node {$\scriptstyle \varphi$} (C);
		\draw[->] (CGx) to node {$\scriptstyle \chi_{\varphi}$} (C);
		\draw[->] (CG) to node[swap] {$\scriptstyle \pi$} (CGab);
		\draw[->](CGab) to node {$\scriptstyle \mathrm{res}$} (CGabx);
		\draw[->](CGabx) to node[swap] {$\scriptstyle \bar{\chi}_{\varphi}$} (C);
		\draw[->] (CGx) to node {$\scriptstyle q'$} (CGabx);
		\end{tikzpicture}
	\end{center}
	In particular,
	we have $\varphi=(\bar{\chi}_{\varphi}\circ\mathrm{res})\circ \pi$ and $\bar{\chi}_{\varphi}\circ\mathrm{res}\in \chara(C^*(G^{\ab}))$.
	Hence,
	$\Phi$ is surjective.
	\qed
\end{proof}

Now, we are ready to calculate the abelianization of $C^*(G)$.
\begin{thm}\label{abelianization of C*G}
	Let $G$ be an \'etale groupoid.
	Then,
	$C^*(G)^{\ab}$ is isomorphic to $C^*(G^{\ab})$ via the unique isomorphism $\bar{\pi}$ which makes the following diagram commutative;
	\begin{center}
		\begin{tikzcd}
			& C^*(G) \arrow[r,"\pi"] \arrow[d,"Q",swap] &C^*(G^{\ab}) \\
			& C^*(G)^{\ab} \arrow[ur,"\bar{\pi}",swap],
		\end{tikzcd}
	\end{center}
	where $Q\colon C^*(G)\to C^*(G)^{\ab}$ denotes the quotient map.
\end{thm}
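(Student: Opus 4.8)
The plan is to construct $\bar\pi$ from the universal property of the abelianization and then verify that it is bijective by a character argument. Since $G^{\ab}$ is an \'etale abelian group bundle, $C^*(G^{\ab})$ is commutative, and $\pi\colon C^*(G)\to C^*(G^{\ab})$ is a *-homomorphism into a commutative C*-algebra. Hence the universality of $C^*(G)^{\ab}$ produces a unique *-homomorphism $\bar\pi\colon C^*(G)^{\ab}\to C^*(G^{\ab})$ with $\bar\pi\circ Q=\pi$, which settles the existence, uniqueness, and commutativity of the diagram at once. For surjectivity of $\bar\pi$, I would note that $\pi$ is the composition of the restriction $C^*(G)\to C^*(G_{\fix})$, surjective by Proposition \ref{canonical surjection}, and the quotient-induced map $C^*(G_{\fix})\to C^*(G^{\ab})$, surjective by Proposition \ref{quotient induces *-hom}; since $\pi=\bar\pi\circ Q$ is then surjective, so is $\bar\pi$.

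The remaining and main task is injectivity of $\bar\pi$, and the idea is to pass to Gelfand spectra, where characters separate points because both algebras are commutative. I would set up the three dual maps
\[
\chara(C^*(G^{\ab}))\xrightarrow{\ \bar\pi^*\ }\chara(C^*(G)^{\ab})\xrightarrow{\ Q^*\ }\chara(C^*(G)),
\]
where $\bar\pi^*(\chi)=\chi\circ\bar\pi$ and $Q^*(\psi)=\psi\circ Q$, and observe that the relation $\bar\pi\circ Q=\pi$ gives $Q^*\circ\bar\pi^*=\Phi$, the map of Lemma \ref{character of CGab}. Since any character of $C^*(G)$ is a *-homomorphism into the commutative algebra $\C$, it factors uniquely through $Q$ by the universality of the abelianization; thus $Q^*$ is a bijection (this is the general fact that a C*-algebra and its abelianization have the same characters). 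As $\Phi$ is a bijection by Lemma \ref{character of CGab}, it follows that $\bar\pi^*=(Q^*)^{-1}\circ\Phi$ is a bijection, and in particular $\bar\pi^*$ is surjective.

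Finally I would deduce injectivity from surjectivity of $\bar\pi^*$. If $b\in C^*(G)^{\ab}$ satisfies $\bar\pi(b)=0$, then for every $\psi\in\chara(C^*(G)^{\ab})$ I choose $\chi\in\chara(C^*(G^{\ab}))$ with $\psi=\chi\circ\bar\pi$, whence $\psi(b)=\chi(\bar\pi(b))=0$; since characters separate points of the commutative C*-algebra $C^*(G)^{\ab}$, this forces $b=0$. Thus $\bar\pi$ is injective, and combined with surjectivity it is the desired isomorphism. The main obstacle is concentrated in Lemma \ref{character of CGab}: identifying $\chara(C^*(G))$ with $\chara(C^*(G^{\ab}))$ compatibly with $\pi$ is exactly what makes the character argument close up, and everything else here is a formal consequence of the universal property of the abelianization together with Gelfand duality.
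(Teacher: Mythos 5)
Your proposal is correct and takes essentially the same approach as the paper: construct $\bar{\pi}$ by the universal property of $C^*(G)^{\ab}$, deduce surjectivity from surjectivity of $\pi$, and reduce injectivity to Lemma \ref{character of CGab} combined with the fact that characters separate points of the commutative C*-algebra $C^*(G)^{\ab}$. Your dual-map packaging $\bar{\pi}^*=(Q^*)^{-1}\circ\Phi$ is only a reformulation of the paper's argument, which for each character $\bar{\varphi}$ of $C^*(G)^{\ab}$ factors $\bar{\varphi}\circ Q$ through $\pi$ via the lemma and concludes $\bar{\varphi}(Q(a))=0$.
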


\begin{proof}
	By the universality of $C^*(G)^{\ab}$,
	we obtain a *-homomorphism which makes the following diagram commutative;
	\begin{center}
		\begin{tikzcd}
			& C^*(G) \arrow[r,"\pi"] \arrow[d,"Q",swap] &C^*(G^{\ab}) \\
			& C^*(G)^{\ab} \arrow[ur,"\bar{\pi}",swap].
		\end{tikzcd}
	\end{center}
	It is clear that $\bar{\pi}$ is surjective.
	We show that $\bar{\pi}$ is injective.
	Suppose that $a\in C^*(G)$ satisfies $\pi(a)=0$.
	It suffices to show $Q(a)=0$,
	which is equivalent to $\bar{\varphi}(Q(a))=0$ for all $\bar{\varphi}\in\chara(C^*(G)^{\ab})$ since $C^*(G)^{\ab}$ is commutative.
	Take $\bar{\varphi}\in\chara(C^*(G)^{\ab})$ and define $\varphi\defeq \bar{\varphi}\circ Q$.
	Then, by Lemma \ref{character of CGab}, there exists $\tilde{\varphi}\in \chara(C^*(G^{\ab}))$ which makes the following diagram commutative;
	\begin{center}
		\begin{tikzcd}
			& C^*(G) \arrow[r,"\pi"]  \arrow[dr,"\varphi"] & C^*(G^{\ab}) \arrow[d,"\tilde{\varphi}"] \\
			&  &\C.
		\end{tikzcd}
	\end{center}
	Now,
	we have the following commutative diagram;
	\begin{center}
		\begin{tikzcd}
			& C^*(G) \arrow[r,"\pi"] \arrow[d,"Q",swap] \arrow[dr,"\varphi"] & C^*(G^{\ab})\arrow[d,"\tilde{\varphi}"] \\
			& C^*(G)^{\ab}\arrow[r,"\bar{\varphi}"] &\C .
		\end{tikzcd}
	\end{center}
	Hence, we have $\bar{\varphi}(Q(a))=\tilde{\varphi}(\pi(a))=0$.
	\qed
\end{proof}

\subsection{Duals of \'etale abelian group bundles}
Let $G$ be an \'etale groupoid.
Since the abelianization of $C^*(G)$ is a commutative C*-algebra,
$C^*(G)^{\ab}$ is isomorphic to $C_0(\chara(C^*(G)^{\ab}))$ via the Gelfand transformation.
In this section,
we calculate the Gelfand spectrum $\chara(C^*(G)^{\ab})$.

For a discrete abelian group $\Gamma$,
its Pontryagin dual group is defined as the set of all group homomorphisms from $\Gamma$ to $\T$,
which is denoted by $\widehat{\Gamma}$.
Then, $\widehat{\Gamma}$ is an abelian group with respect to the pointwise multiplication.
It is known that $\widehat{\Gamma}$ is a compact abelian topological group with respect to the topology of pointwise convergence.

\begin{prop}
	Let $\Gamma$ be a discrete group and $Q\colon C^*(\Gamma)\to C^*(\Gamma^{\ab})$ be the *-homomorphism induced by the quotient map $\Gamma\to\Gamma^{\ab}$.
	Then,
	a map 
	\[
	\Phi\colon \widehat{\Gamma^{\ab}} \ni \chi \mapsto \chi\circ Q\in \chara (C^*(\Gamma))  
	\]
	is a homeomorphism.
	Hence, $C^*(\Gamma)^{\ab}$ is isomorphic to $C(\widehat{\Gamma^{\ab}})$.
\end{prop}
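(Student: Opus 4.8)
The plan is to recognize the discrete group $\Gamma$ as an \'etale groupoid with a one-point unit space and then reduce everything to the results of the previous subsections, leaving only the comparison of topologies to be done by hand. First I would set $G\defeq\Gamma$, viewed as an \'etale groupoid with $G^{(0)}=\{e\}$. The unique unit $e$ is a fixed point, so the set of fixed points is $F=G^{(0)}$ and $G_{\fix}=G=\Gamma$; the commutator subgroupoid is then just the usual commutator subgroup, whence $G^{\ab}=\Gamma/[\Gamma,\Gamma]=\Gamma^{\ab}$. Because $G_{\fix}=G$, the restriction map $C^*(G)\to C^*(G_{\fix})$ is the identity, so the homomorphism $\pi\colon C^*(G)\to C^*(G^{\ab})$ constructed before Lemma \ref{character of CGab} is precisely the given map $Q\colon C^*(\Gamma)\to C^*(\Gamma^{\ab})$.

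For bijectivity of $\Phi$ I would invoke Lemma \ref{character of CGab} for this $G$: the assignment $\chi\mapsto\chi\circ\pi=\chi\circ Q$ is a bijection from $\chara(C^*(\Gamma^{\ab}))$ onto $\chara(C^*(\Gamma))$. It then remains to identify $\chara(C^*(\Gamma^{\ab}))$ with $\widehat{\Gamma^{\ab}}$. Every $\chi\in\widehat{\Gamma^{\ab}}$ extends to a character of $C^*(\Gamma^{\ab})$ (as recalled before Definition \ref{def of phix,chi}), and conversely a character $\psi$ restricts to a homomorphism $a\mapsto\psi(\delta_a)$ into $\T$, using that each $\delta_a$ is unitary; these two assignments are mutually inverse. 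Under this identification $\Phi$ is exactly the bijection produced by Lemma \ref{character of CGab}.

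The topological statement is where the only real work lies. Continuity of $\Phi$ is soft: it is the restriction to the character space of the transpose of the fixed bounded map $Q$, so a net $\chi_i\to\chi$ (pointwise on $\Gamma^{\ab}$, equivalently in the weak-$*$ topology) yields $\chi_i\circ Q\to\chi\circ Q$ first on the dense $*$-subalgebra $\mathcal{C}(\Gamma)$ and then everywhere by uniform boundedness of characters. To upgrade this continuous bijection to a homeomorphism I would use that $\widehat{\Gamma^{\ab}}$ is compact while $\chara(C^*(\Gamma))$ is Hausdorff in the weak-$*$ topology, so that a continuous bijection from a compact space to a Hausdorff space is automatically a homeomorphism. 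The point requiring care is simply matching the pointwise-convergence topology on $\widehat{\Gamma^{\ab}}$ with the weak-$*$ topology on the character space, which again follows because the group elements span a dense subalgebra and characters are norm-bounded.

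Finally, for the concluding isomorphism I would argue as follows. By Gelfand--Naimark, $C^*(\Gamma)^{\ab}\cong C_0(\chara(C^*(\Gamma)^{\ab}))$; since $\C$ is commutative, every character of $C^*(\Gamma)$ factors through $C^*(\Gamma)^{\ab}$, giving a homeomorphism $\chara(C^*(\Gamma)^{\ab})\cong\chara(C^*(\Gamma))$, which by $\Phi$ is homeomorphic to the compact space $\widehat{\Gamma^{\ab}}$, whence $C^*(\Gamma)^{\ab}\cong C_0(\widehat{\Gamma^{\ab}})=C(\widehat{\Gamma^{\ab}})$. (The same conclusion also drops out of Theorem \ref{abelianization of C*G} applied to $G=\Gamma$ combined with abelian Gelfand duality.) I expect no serious obstacle beyond the topology-matching step, since the algebraic bijection is already packaged in the earlier lemmas.
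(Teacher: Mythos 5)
Your proof is correct, but it takes a genuinely different route from the paper. The paper's proof is a one-line appeal to classical universality: one-dimensional nondegenerate representations of $C^*(\Gamma)$ correspond to unitary representations of $\Gamma$ on $\C$, i.e.\ group homomorphisms $\Gamma\to\T$, and by the universality of $\Gamma^{\ab}$ these are exactly the elements of $\widehat{\Gamma^{\ab}}$; the topological identification is treated as standard. You instead specialize the paper's own groupoid machinery to the case $G=\Gamma$, $G^{(0)}=\{e\}$: you observe $G_{\fix}=G$ and $G^{\ab}=\Gamma^{\ab}$, identify $\pi$ of Section 3.2 with the given $Q$, get bijectivity from Lemma \ref{character of CGab}, identify $\chara(C^*(\Gamma^{\ab}))$ with $\widehat{\Gamma^{\ab}}$ by the extension/restriction argument recalled before Definition \ref{def of phix,chi}, and then do the topology by hand (continuity on the dense subalgebra plus uniform boundedness of characters, then compact-to-Hausdorff). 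This is logically sound and non-circular, since Lemma \ref{character of CGab} and Theorem \ref{abelianization of C*G} precede this proposition and do not depend on it. The trade-off: your route demonstrates that the group case really is a corollary of the general groupoid results (and makes the topological step, which the paper glosses over, fully explicit), whereas the paper's argument is self-contained, elementary, and signals that this proposition is classical motivation rather than a consequence of the new machinery — indeed, in the paper's logical architecture this proposition motivates Definition \ref{definition of dual groupoid} and Proposition \ref{dual group bundle}, which then generalize it rather than follow from it.
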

\begin{proof}
	This follows from the universality of $\Gamma^{\ab}$ and $C^*(\Gamma)$.\qed
\end{proof}

As seen in the previous proposition,
the key to calculate $\chara(C^*(G))$ is the Pontryagin dual.

\begin{defi}\label{definition of dual groupoid}
	Let $G$ be an \'etale abelian group bundle.
	We define a group bundle $\widehat{G}\defeq \{(\chi, x)\mid x\in G^{(0)}, \chi\in\widehat{G_x}\}$ over $G^{(0)}$.
\end{defi}
Note that $\widehat{G}$ is a group bundle such that $\widehat{G}_x=\widehat{G_x}\times\{x\} (\simeq \widehat{G_x})$ for every $x\in G^{(0)}$.

Let $G$ be an \'etale abelian group bundle and $(\chi,x)\in \widehat{G}$.
Recall that we obtain the *-homomorphism $\varphi_{x,\chi}\in \chara(C^*(G))$ as in Definition \ref{def of phix,chi}.

\begin{defi}
	Let $G$ be an \'etale abelian group bundle.
	For each $f\in \mathcal{C}(G)$,
	we define $\ev_f\colon \widehat{G}\to \C$ by $\ev_f((\chi,x))=\varphi_{x,\chi}(f)$, where $(\chi,x)\in \widehat{G}$.
	We define a topology of $\widehat{G}$ as the weakest topology in which $\ev_f$ is continuous for all $f\in \mathcal{C}(G)$.
\end{defi}

\begin{prop}\label{dual group bundle}
	Let $G$ be an \'etale abelian group bundle.
	Then, the map\footnote{See Proposition \ref{def of fixed pt} and \ref{def of chi} for the definition of $x_{\varphi}$ and $\chi_{\varphi}$. }
	\[\Psi\colon\chara(C^*(G))\ni \varphi \mapsto (\chi_{\varphi},x_{\varphi})\in \widehat{G}\]
	is a homeomorphism.
	Hence,
	$C^*(G)$ is isomorphic to $C_0(\widehat{G})$
	
\end{prop}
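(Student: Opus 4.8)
The plan is to get the bijectivity of $\Psi$ almost for free from the classification of characters, and then to reduce both continuity assertions to the definitions of the two topologies involved. First I would record that, since $G$ is an \'etale abelian group bundle, $G=\Iso(G)$ forces $s(\gamma)=r(\gamma)$ for every $\gamma$, so every unit $x\in G^{(0)}$ is a fixed point; moreover, as each $G_x$ is abelian, a group homomorphism $G_x\to\T$ is exactly an element of $\widehat{G_x}$. Hence the set $\mathcal D$ of Theorem \ref{charaC*G} is in canonical bijection with $\widehat G$ via $(x,\chi)\leftrightarrow(\chi,x)$, and Theorem \ref{charaC*G} together with Propositions \ref{katahou1} and \ref{katahou2} says precisely that $\varphi\mapsto(\chi_\varphi,x_\varphi)$ is the inverse of $(x,\chi)\mapsto\varphi_{x,\chi}$. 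So $\Psi$ is a well-defined bijection, and the only real content is the continuity of $\Psi$ and of $\Psi^{-1}$.

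The key computation is the identity $\ev_f\circ\Psi=\hat f$ for every $f\in\mathcal C(G)$, where $\hat f(\varphi)=\varphi(f)$ denotes the Gelfand transform. Indeed, by Proposition \ref{katahou1} we have $\varphi=\varphi_{x_\varphi,\chi_\varphi}$, whence
\[
\ev_f(\Psi(\varphi))=\ev_f((\chi_\varphi,x_\varphi))=\varphi_{x_\varphi,\chi_\varphi}(f)=\varphi(f).
\]
Since the topology of $\widehat G$ is by definition the initial (weakest) topology making every $\ev_f$ continuous, and since each $\hat f$ is continuous on $\chara(C^*(G))$ for the topology of pointwise convergence, the universal property of the initial topology gives at once that $\Psi$ is continuous.

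For the converse direction, which I expect to be the main (though still mild) obstacle, I would check that for every $a\in C^*(G)$ the map $\widehat G\ni(\chi,x)\mapsto\varphi_{x,\chi}(a)\in\C$ is continuous; this yields continuity of $\Psi^{-1}$, because the topology on $\chara(C^*(G))$ is the initial topology for the evaluation maps $\varphi\mapsto\varphi(a)$, $a\in C^*(G)$. For $a=f\in\mathcal C(G)$ this map is exactly $\ev_f$, continuous by the definition of the topology on $\widehat G$. For general $a$, choose $f_n\in\mathcal C(G)$ with $f_n\to a$ in $C^*(G)$; since each $\varphi_{x,\chi}$ is a $*$-homomorphism into $\C$ and hence contractive, the estimate $\lvert\varphi_{x,\chi}(a)-\varphi_{x,\chi}(f_n)\rvert\le\lVert a-f_n\rVert$ holds uniformly in $(\chi,x)$. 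Thus $(\chi,x)\mapsto\varphi_{x,\chi}(a)$ is a uniform limit of the continuous functions $\ev_{f_n}$, hence continuous. Together with the previous paragraph this shows that $\Psi$ is a homeomorphism.

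Finally, the asserted isomorphism $C^*(G)\simeq C_0(\widehat G)$ follows by combining the Gelfand--Naimark theorem with the homeomorphism $\Psi$: because $G$ is an abelian group bundle, $C^*(G)$ is commutative, so the Gelfand transform gives $C^*(G)\simeq C_0(\chara(C^*(G)))$, and transporting along $\Psi$ yields $C_0(\chara(C^*(G)))\simeq C_0(\widehat G)$.
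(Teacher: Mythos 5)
Your proposal is correct and follows essentially the same route as the paper: bijectivity from Theorem \ref{charaC*G}, continuity of $\Psi$ from the identity $\ev_f\circ\Psi=\hat f$ (via Proposition \ref{katahou1}) and the definition of the initial topology on $\widehat G$, and continuity of $\Psi^{-1}$ by density of $\mathcal C(G)$ and contractivity of characters. The only difference is that the paper compresses this last step into the phrase ``follows from approximation arguments,'' which your uniform-limit estimate spells out correctly.
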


\begin{proof}
	Proposition \ref{charaC*G} states that $\Psi$ is a bijection and $\Psi^{-1}$ is given by $\Psi^{-1}((\chi,x))=\varphi_{x,\chi}$ for each $(\chi,x)\in \widehat{G}$.
	For each $f\in \mathcal{C}(G)$,
	a map $\chara(C^*(G))\ni\varphi\mapsto \ev_f((\chi_{\varphi},x_{\varphi}))=\varphi(f)\in \C$ is continuous.
	This means that $\Psi$ is continuous.
	The continuity of $\Psi^{-1}$ follows from approximation arguments.	
	Therefore,
	$\Psi$ is a homeomorphism.
	\qed
\end{proof}
Let $G$ be an \'etale groupoid.
Recall that $G^{\ab}$ is an \'etale abelian group bundle.

\begin{cor}\label{Main theorem 2}
	Let $G$ be an \'etale groupoid.
	Then,
	$C^*(G)^{\ab}$ is isomorphic to $C_0(\widehat{G^{\ab}})$.
\end{cor}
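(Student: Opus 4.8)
The plan is to combine the two main results that have already been established: Theorem \ref{abelianization of C*G}, which identifies $C^*(G)^{\ab}$ with $C^*(G^{\ab})$, and Proposition \ref{dual group bundle}, which identifies $C^*(H)$ with $C_0(\widehat{H})$ for any \'etale abelian group bundle $H$. Since the corollary is essentially the concatenation of these two facts, the proof is a short composition of isomorphisms rather than a new argument.

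First I would recall that $G^{\ab}=G_{\fix}/[G_{\fix},G_{\fix}]$ is an \'etale abelian group bundle. This is already noted in the text: $G_{\fix}$ is an \'etale group bundle, and quotienting by the commutator subgroupoid renders each fiber abelian, so the hypothesis of Proposition \ref{dual group bundle} is satisfied by $H\defeq G^{\ab}$. Next I would invoke Theorem \ref{abelianization of C*G} to obtain a $*$-isomorphism $\bar{\pi}\colon C^*(G)^{\ab}\xrightarrow{\sim} C^*(G^{\ab})$. Then I would apply Proposition \ref{dual group bundle} to $G^{\ab}$, yielding a $*$-isomorphism $C^*(G^{\ab})\xrightarrow{\sim} C_0(\widehat{G^{\ab}})$. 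Composing the two gives the desired isomorphism
\[
C^*(G)^{\ab}\;\simeq\; C^*(G^{\ab})\;\simeq\; C_0(\widehat{G^{\ab}}).
\]

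There is essentially no obstacle here, since all the substantive work has been done upstream. The only point requiring a line of justification is that $G^{\ab}$ genuinely is an \'etale abelian group bundle, so that Proposition \ref{dual group bundle} applies; this follows from Proposition \ref{restriction to inv is etale} (giving that $G_{\fix}$ is \'etale), the observation that $G_{\fix}=\Iso(G_{\fix})$ makes it a group bundle, and Proposition \ref{quotient groupoid} together with the fact that quotienting by $[G_{\fix},G_{\fix}]$ abelianizes each fiber. Thus the proof is a one-sentence composition once this remark is in place.

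\begin{proof}
	Recall that $G^{\ab}$ is an \'etale abelian group bundle.
	By Theorem \ref{abelianization of C*G},
	we have an isomorphism $C^*(G)^{\ab}\simeq C^*(G^{\ab})$.
	By Proposition \ref{dual group bundle} applied to the \'etale abelian group bundle $G^{\ab}$,
	we have an isomorphism $C^*(G^{\ab})\simeq C_0(\widehat{G^{\ab}})$.
	Composing these isomorphisms,
	we obtain $C^*(G)^{\ab}\simeq C_0(\widehat{G^{\ab}})$.
	\qed
\end{proof}
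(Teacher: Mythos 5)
Your proposal is correct and follows exactly the paper's own argument: invoke Theorem \ref{abelianization of C*G} to get $C^*(G)^{\ab}\simeq C^*(G^{\ab})$, note that $G^{\ab}$ is an \'etale abelian group bundle, and apply Proposition \ref{dual group bundle} to conclude $C^*(G^{\ab})\simeq C_0(\widehat{G^{\ab}})$. No gaps; your extra remark justifying why $G^{\ab}$ qualifies as an \'etale abelian group bundle is a harmless (and slightly more careful) elaboration of what the paper leaves implicit.
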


\begin{proof}
	Recall that $C^*(G)^{\ab}$ is isomorphic to $C^*(G^{\ab})$ by Theorem \ref{abelianization of C*G}.
	Since $G^{\ab}$ is an \'etale abelian group bundle,
	Proposition \ref{dual group bundle} implies that $C^*(G^{\ab})$ is isomorphic to $C_0(\widehat{G^{\ab}})$.
	\qed
\end{proof}

\begin{prop}
	Let $G$ be an \'etale abelian group bundle.
	Then,
	$\widehat{G}$ is a locally compact Hausdorff topological group bundle.
	Furthermore,
	$\widehat{G}$ is compact if and only if $G^{(0)}$ is compact. 
\end{prop}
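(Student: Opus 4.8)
The plan is to transport everything through the homeomorphism $\Psi\colon\chara(C^*(G))\to\widehat{G}$ and the isomorphism $C^*(G)\simeq C_0(\widehat{G})$ furnished by Proposition \ref{dual group bundle}. Since $G$ is an \'etale abelian group bundle, $C^*(G)$ is commutative, so $\chara(C^*(G))$ is precisely its Gelfand spectrum; hence it is automatically a locally compact Hausdorff space, and via $\Psi$ so is $\widehat{G}$. This settles the point-set assertions at once, and it remains to check (i) that the groupoid operations of $\widehat{G}$ are continuous and (ii) the compactness criterion.

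For (i) I would use that, by definition, the topology of $\widehat{G}$ is the initial topology for the family $\{\ev_f\}_{f\in\mathcal{C}(G)}$, so a map into $\widehat{G}$ is continuous exactly when its composition with every $\ev_f$ is continuous; moreover it suffices to test against $f\in C_c(U)$ with $U$ an open bisection, as these span $\mathcal{C}(G)$ (Proposition \ref{supported on bisection}). For such $f$ one has $\ev_f(\chi,x)=f(\gamma_x)\chi(\gamma_x)$, where $\gamma_x\defeq(s|_U)^{-1}(x)$ if $x\in s(U)$ and the value is $0$ otherwise; here I crucially use that $G$ is a group bundle, so $\gamma_x\in G_x$ automatically and $\chi(\gamma_x)$ makes sense. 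The projection $p\colon\widehat{G}\to G^{(0)}$, $(\chi,x)\mapsto x$, is surjective (via $x\mapsto(\mathbf{1}_x,x)$) and continuous, since $h\circ p=\ev_h$ for every $h\in C_c(G^{(0)})$ and $C_0(G^{(0)})$ generates the topology of $G^{(0)}$. Inversion is the easy case: since $\overline{\chi(\gamma_x)}=\chi(\gamma_x^{-1})$, a direct computation gives $\ev_f\circ\iota=\overline{\ev_{\bar f}}$ (with $\bar f\in C_c(U)$ the pointwise conjugate), which is continuous.

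The genuine work is continuity of the fibrewise multiplication $m\colon\widehat{G}^{(2)}\to\widehat{G}$, $((\chi_1,x),(\chi_2,x))\mapsto(\chi_1\chi_2,x)$, and this is where I expect the main obstacle. I would argue with nets: if $((\chi_1^i,x_i),(\chi_2^i,x_i))\to((\chi_1,x),(\chi_2,x))$, then $x_i\to x$ by continuity of $p$, and for $f\in C_c(U)$ I must show $f(\gamma_{x_i})\chi_1^i(\gamma_{x_i})\chi_2^i(\gamma_{x_i})\to f(\gamma_x)\chi_1(\gamma_x)\chi_2(\gamma_x)$. When $x\notin s(\supp f)$ (a compact, hence closed, set) both sides eventually vanish; otherwise $\gamma_x\in U$ is defined, and I would choose a cutoff $g\in C_c(U)$ with $g\equiv 1$ on a neighbourhood of $\gamma_x$, so that $\ev_g(\chi,x')=\chi(\gamma_{x'})$ for $x'$ near $x$. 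Then $\gamma_{x_i}\to\gamma_x$ yields $f(\gamma_{x_i})\to f(\gamma_x)$, while $\ev_g(\chi_k^i,x_i)\to\chi_k(\gamma_x)$ for $k=1,2$ gives convergence of the two character factors; multiplying the three factors proves the claim. This splitting trick, needed to isolate the three factors without squaring $f$, is the crux of the argument.

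Finally, for (ii) I would observe that $\widehat{G}$ is compact if and only if $C_0(\widehat{G})\simeq C^*(G)$ is unital. If $\widehat{G}$ is compact, then $G^{(0)}=p(\widehat{G})$ is a continuous image of a compact set, hence compact. Conversely, if $G^{(0)}$ is compact, then $1_{G^{(0)}}\in C_c(G^{(0)})\subset\mathcal{C}(G)$, and a short convolution computation (again using that $G$ is a group bundle, so $1_{G^{(0)}}*f=f*1_{G^{(0)}}=f$ for $f$ supported on a bisection) shows $1_{G^{(0)}}$ is a unit for $\mathcal{C}(G)$, hence for $C^*(G)$; therefore its Gelfand spectrum $\widehat{G}$ is compact. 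This yields the stated equivalence.
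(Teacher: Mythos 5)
Your proof is correct, and at the decisive step it diverges from the paper's argument in a way that matters: the paper disposes of continuity of multiplication in one line by asserting the identity $\ev_f(\chi_1\chi_2)=\ev_f(\chi_1)\ev_f(\chi_2)$ for every $f\in\mathcal{C}(G)$, but this identity is false in general. For $f\in C_c(U)$ with $U$ an open bisection, $x\in s(U)$ and $\gamma_x\defeq (s|_U)^{-1}(x)$, one has $\ev_f((\chi_1\chi_2,x))=f(\gamma_x)\,\chi_1(\gamma_x)\chi_2(\gamma_x)$, whereas $\ev_f((\chi_1,x))\,\ev_f((\chi_2,x))=f(\gamma_x)^2\,\chi_1(\gamma_x)\chi_2(\gamma_x)$; more generally $f\mapsto\ev_f$ is linear, so the left side is linear in $f$ while the right side is quadratic, and they agree only for suitably normalized $f$. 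Your cutoff trick is exactly the needed repair: testing against $g\in C_c(U)$ with $g\equiv 1$ near $\gamma_x$ recovers locally the honest, genuinely multiplicative pairing $\chi\mapsto\chi(\gamma_{x'})$, and the three-factor splitting $f(\gamma_{x_i})\,\chi_1^i(\gamma_{x_i})\,\chi_2^i(\gamma_{x_i})$ then converges factorwise to the desired limit. Your supporting steps are also sound: the reduction to $f$ supported on open bisections is legitimate because linearity of $f\mapsto\ev_f$ ensures the spanning family induces the same initial topology; continuity of the projection $p$ follows from $\ev_h=h\circ p$ for $h\in C_c(G^{(0)})$ together with the fact that $C_c(G^{(0)})$ induces the topology of the locally compact Hausdorff space $G^{(0)}$; the case $x\notin s(\supp f)$ works since $s(\supp f)$ is compact, hence closed; and $\ev_f\circ\iota=\overline{\ev_{\bar f}}$ settles inversion. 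Your compactness argument coincides with the paper's (unitality of $C^*(G)\simeq C_0(\widehat{G})$), with the minor remark that the convolution identity $1_{G^{(0)}}*f=f*1_{G^{(0)}}=f$ needs no group-bundle hypothesis, as it holds in any \'etale groupoid with compact unit space. In short, you follow the paper's overall strategy—transport through $\chara(C^*(G))$, initial topology, unitality—but supply a correct argument precisely where the paper's shortcut breaks down.
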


\begin{proof}
	It is clear that $\widehat{G}$ is locally compact Hausdorff,
	since $\widehat{G}$ is homeomorphic to $\chara(C^*(G))$.
	In order to show the continuity of the operations,
	take $f\in \mathcal{C}(G)$ arbitrarily.
	Then,
	the map $\widehat{G}^{(2)}\ni (\chi_1,\chi_2)\mapsto \ev_f(\chi_1\chi_2)=\ev_f(\chi_1)\ev_f(\chi_2)\in \C$ is continuous.
	Therefore,
	the multiplication of $\widehat{G}^{(2)}$ is continuous.
	Similarly,
	one can show that the inverse is continuous.
	Hence,
	$\widehat{G}$ is a locally compact Hausdorff topological group bundle.
	The last assertion follows from the fact that $G^{(0)}$ is compact if and only if $C^*(G)\simeq C_0(\widehat{G})$ is unital.
	\qed 
\end{proof}

\begin{ex}
	Let $\mathfrak{S}_3=\langle s,t\mid s^3=t^2=e, st=ts^2 \rangle=\{e,s,s^2,t,ts,ts^2\}$ be the symmetric group of degree $3$
	and $A_3\defeq\{e,s,s^2\}\subset \mathfrak{S}_3$ be the subgroup of even permutations.
	Let $G\defeq\mathfrak{S}_3\times[0,1]\setminus\{(t,1)\mid t\not\in A_3\}$ be an \'etale group bundle over $[0,1]$. 
	Then, $G$ can be drawn as follows;
	
	\begin{center}
		\begin{tikzpicture}[auto]
		\node (0s2) at (0,1.5) {}; \node (1s2) at (3,1.5) {};
		\node (0s) at (0,0.5) {}; \node (1s) at (3,0.5) {};
		\node (0) at (0, 0) {}; \node (1) at (3, 0) {};
		\draw[-] (0,0) to node {} (3,0);
		\draw[-] (0,0.5) to node {} (2.97,0.5);
		\draw[-] (0,1) to node {} (3,1);
		\draw[-] (0,1.5) to node {} (2.97,1.5);
		\draw[-] (0,2) to node {} (3,2);
		\draw[-] (0,2.5) to node {} (2.97,2.5);
		\draw[-] (0,1.5) to node {} (2.97,1.5);
		\fill[black] (0) circle (1pt);
		\fill[black] (0s2) circle (1pt);
		\draw[black] (1s2) circle (1pt);
		\draw[black] (3,0.5) circle (1pt);
		\draw[black] (3,2.5) circle (1pt);
		\fill[black] (1) circle (1pt);
		\fill[black] (3,1) circle (1pt);
		\fill[black] (3,2) circle (1pt);
		\fill[black] (0,0.5) circle (1pt);
		\fill[black] (0,1) circle (1pt);
		\fill[black] (0,2) circle (1pt);
		\fill[black] (0,2.5) circle (1pt);
		\node at (3.4,1) {$\scriptstyle (s,1)$};
		\node at (3.45,2) {$\scriptstyle (s^2,1)$};
		\node at (3.4,0) {$\scriptstyle (e,1)$};
		\node at (-0.4,0) {$\scriptstyle (e,0)$}; 
		\node at (-0.4,1.5) {$\scriptstyle (ts,0)$};
		\node at (-0.4,1) {$\scriptstyle (s,0)$};
		\node at (-0.4,2) {$\scriptstyle (s^2, 0)$};
		\node at (-0.4,0.5) {$\scriptstyle (t,0)$};
		\node at (-0.5,2.5) {$\scriptstyle (ts^2,0)$};
		\end{tikzpicture}	
\end{center}
One can see that $[G,G]\subset G$ is not closed.
By Proposition \ref{Hausdorffness of G/H},
$G^{\ab}=G/[G,G]$ is not Hausdorff.
Indeed, letting $q\colon G\to G^{\ab}$ denote the quotient map, $G^{\ab}$ looks like the following;
	
	\begin{center}
		\begin{tikzpicture}[auto]
		\node (0s) at (0,1.5) {}; \node (1s) at (3,1.5) {};
		\node (0) at (0, 0) {}; \node (1) at (3, 0) {};
		\draw[-] (0,0) to node {} (3,0);
		\draw[-] (0,1.5) to node {} (2.97,1.5);
		\fill[black] (0) circle (1pt);
		\fill[black] (0s) circle (1pt);
		\draw[black] (1s) circle (1pt);
		\fill[black] (1) circle (1pt); \fill[black] (3.05, 0.05) circle (1pt);
		\fill[black] (3.05,-0.05) circle (1pt);
		\node at (3,-0.3) {$\scriptstyle q((e,1)),q((s,1)),q((s^2,1))$};
		\node at (0,-0.3) {$\scriptstyle q((e,0))$}; 
		\node at (0,1.2) {$\scriptstyle q((t,0))$};
		\end{tikzpicture}		
	\end{center}
	The dual $\widehat{G^{\ab}}$ of $G^{\ab}$ can be drawn as follows;
	\begin{center}
		\begin{tikzpicture}[auto]
		\node (0s) at (0,1.5) {}; \node (1s) at (3,1.5) {};
		\node (0) at (0, 0) {}; \node (1) at (3, 0) {};
		\draw[-] (0,0) to node {} (3,0);
		\draw[-] (0,1.5) to node {} (3,0);
		\fill[black] (0) circle (1pt);
		\fill[black] (0s) circle (1pt);
		\draw[black] (1s) circle (1pt);
		\fill[black] (1) circle (1pt); \fill[black] (3, 0.7) circle (1pt);
		\fill[black] (3,1.5) circle (1pt);
		\end{tikzpicture}
	\end{center}
	Note that $\widehat{G^{\ab}}$ is not \'etale.
\end{ex}

\bibliographystyle{plain}
\bibliography{bunken20180828}

\begin{thebibliography}{1}

\bibitem{Brown2014}
J.~Brown, L.~O. Clark, C.~Farthing, and A.~Sims.
\newblock Simplicity of algebras associated to {\'e}tale groupoids.
\newblock {\em Semigroup Forum}, \textbf{88}(2):433--452, 2014.

\bibitem{LisaRuyAidan}
L.~O. Clark, R.~Exel, E.~Pardo, A.~Sims, and C~Starling.
\newblock Simplicity of algebras associated to non-{H}ausdorff groupoids.
\newblock arXiv:1806.04362, 2018.

\bibitem{Connes82asurvey}
A.~Connes.
\newblock A survey of foliations and operator algebras.
\newblock In {\em Proc. Sympos. Pure}, volume \textbf{38}, pages 521--628,
  1982.

\bibitem{ExelnonHausdorff}
R.~Exel.
\newblock {N}on-{H}ausdorff \'etale groupoids.
\newblock {\em Proceedings of the American Mathematical Society},
  \textbf{139}(3):897--907, 2011.

\bibitem{ExelPardo2016}
R.~Exel and E.~Pardo.
\newblock The tight groupoid of an inverse semigroup.
\newblock {\em Semigroup Forum}, 92(1):274--303, 2016.

\bibitem{paterson2012groupoids}
A.~Paterson.
\newblock {\em Groupoids, Inverse Semigroups, and their Operator Algebras}.
\newblock Progress in Mathematics. Birkh{\"a}user Boston, 2012.

\bibitem{renault1980groupoid}
J.~Renault.
\newblock {\em A Groupoid Approach to C*-Algebras}.
\newblock Lecture Notes in Mathematics. Springer-Verlag, 1980.

\bibitem{renault}
J.~Renault.
\newblock Cartan subalgebras in {C}*-algebras.
\newblock {\em Irish Math. Soc. Bull.}, \textbf{61}:29--63, 2008.

\bibitem{asims}
A.~Sims.
\newblock Hausdorff \'{e}tale groupoids and their {C}*-algebras.
\newblock {arXiv:1710.10897v1}, 2017.

\end{thebibliography}

\end{document}